\DeclareMathAlphabet{\mathcal}{OMS}{cmsy}{m}{n} 
\newcommand{\sk}{\smallskip}
\newcommand{\mk}{\medskip}
\newcommand{\bk}{\bigskip}
\renewcommand{\emptyset}{\ensuremath{\varnothing}}     
\newcommand{\G}{\mathbf{G}}
\newcommand{\B}{\mathbf{B}}
\newcommand{\U}{\mathbf{U}}
\newcommand{\T}{\mathbf{T}}
\renewcommand{\P}{\mathbf{P}}
\renewcommand{\L}{\mathbf{L}}
\newcommand{\intn}[2]{\ensuremath{[\![ \, #1 \,;\, #2 \,]\!]}}
\newlength{\leftlength}
\newlength{\rightlength}
\newlength{\calculskip}
\newcommand{\calculvskip}[1]{%
  \ifthenelse{#1 = 0}{\setlength{\calculskip}{0pt}}{}%
  \ifthenelse{#1 = 1}{\setlength{\calculskip}{\smallskipamount}}{}%
  \ifthenelse{#1 = 2}{\setlength{\calculskip}{\medskipamount}}{}%
  \ifthenelse{#1 = 3}{\setlength{\calculskip}{\bigskipamount}}{}%
  \ifthenelse{#1 = 4}{\setlength{\calculskip}{1cm}}{}%
  \vskip\calculskip
}
\newcommand{\leftcentersright}[4][2]{%
        \settowidth{\leftlength}{#2}%
        \settowidth{\rightlength}{#4}%
        \calculvskip{#1}
        \noindent#2\hskip-\leftlength%
        \hfill#3\hfill
        \mbox{}\hskip-\rightlength#4%
        \vskip\calculskip%
        }
\newcommand{\centers}[2][2]{\leftcentersright[#1]{}{#2}{}}
\newcommand{\leftcenters}[3][2]{\leftcentersright[#1]{#2}{#3}{}}
\def\svhline{%
  \noalign{\ifnum0=`}\fi\hrule \@height2\arrayrulewidth \futurelet
   \reserved@a\@xhline}
\def\hlinewd#1{%
\noalign{\ifnum0=`}\fi\hrule \@height #1 %
\futurelet\reserved@a\@xhline}
\numberwithin{equation}{section}
\newtheorem{prop}[equation]{Proposition}  
\newtheorem{de}[equation]{Definition}
\newtheorem{thm}[equation]{Theorem}
\newtheorem{lem}[equation]{Lemma}  
\newtheorem{cor}[equation]{Corollary}
\newtheorem{conj}[equation]{Conjecture}
\newtheorem*{conjHLM}{Conjecture HLM}
\newtheorem*{conjHLM2}{Conjecture HLM+}
\newtheorem*{thm2}{Theorem B}
\newtheorem*{thm3}{Theorem A}
\theoremstyle{definition}
\newtheorem{exemple}[equation]{Example}  
\newtheorem{rmk}[equation]{Remark}  
\newtheorem{fait}[equation]{Fact}  
\newtheorem{cons}[equation]{Consequences}  
\newtheorem{notation}[equation]{Notation}
\renewcommand\a{\mathscr{A}}
\renewcommand\b{\mathscr{B}}
\newcommand{\X}{\mathrm{X}}
\newcommand{\Y}{\mathrm{Y}}
\newcommand{\Hc}{\mathrm{H}_c}
\newcommand{\Irr}{\mathrm{Irr}}
\newcommand{\Rgc}{\mathrm{R}\Gamma_c}
\newcommand\dec{\mathrm{dec}}
\newcommand\mathcalm[1]{\fontfamily{cmss}\selectfont \mathcal{#1}}
\newcommand\hg{\mathrm{h}_\Gamma}
\newcommand{\ol}{\mathop{\otimes}\limits^\mathrm{L}}
\begin{document}

\title{Coxeter orbits and Brauer trees}
\author{Olivier Dudas\footnote{Mathematical Institute, Oxford.}
\footnote{The author is supported by the EPSRC, Project No EP/H026568/1, by Magdalen College, Oxford and partly by the ANR, Project No JC07-192339.}}

\maketitle

\begin{abstract} We study the cohomology with modular coefficients of Deligne-Lusztig varieties associated to Coxeter elements. Under some torsion-free assumption on the cohomology we derive several results on the principal $\ell$-block of a finite reductive group $\G(\mathbb{F}_q)$ when the order of $q$ modulo $\ell$ is assumed to be the Coxeter number. These results include the determination of the planar embedded Brauer tree of the block (as conjectured by Hiss, L\"ubeck and Malle in \cite{HLM}) and the derived equivalence predicted by  the geometric version of Brou\'e's conjecture \cite{BMa2}.
\end{abstract}

\section*{Introduction}

Let $\G$ be a quasi-simple algebraic group defined over an algebraic closure of a finite field of characteristic $p$. Let $F$ be the Frobenius endomorphism of $\G$ associated to a rational $\mathbb{F}_q$-structure. The finite group $G = \G^F$ of fixed points under $F$ is called a finite reductive group.

\sk

The ordinary representation theory of $G$ is now widely understood: geometric methods have been developed by Deligne and Lusztig \cite{DeLu} and then extensively studied by Lusztig, leading to a complete classification of the irreducible characters of $G$ \cite{Lu5}. One of the key step in Lusztig's fundamental work has been to determine explicitly the $\ell$-adic cohomology of  Deligne-Lusztig varieties associated to Coxeter elements \cite{Lu}. This paper is an attempt to extend  this  work to the modular setting. To be more precise, we will be interested in the complex $\Rgc(\Y(\dot c),\Lambda)$ representing the cohomology of the variety $\Y(\dot c)$ with coefficients in a finite extension $\Lambda$ of $\mathbb{Z}_\ell$, and more specifically in the action of $G$ and $F$ on this complex. The representation theory of $\Lambda G$ is highly dependent on the prime number $\ell$. In this particular geometric situation $-$ the Coxeter case $-$ the corresponding primes $\ell$ are those which divide the cyclotomic polynomial $\Phi_h(q)$ where $h$ is the Coxeter number. For such prime numbers, the principal part $b\Rgc(\Y(\dot c), \Lambda)$ of the cohomology complex should encode many aspects of the representation theory of the principal $\ell$-block $b$ of $G$, much of which remains conjectural.

\sk

In the Coxeter case, the principal $\ell$-block of $G$ has a cyclic defect group. From the work of Brauer we know that the category of modules over such a block has a combinatorial description, given by the \emph{Brauer tree}. The shape of this tree is related to the decomposition matrix of the block whereas its planar embedding determines the module category up to Morita equivalence.  By a case-by-case analysis, Hiss, L\"ubeck and Malle have underlined in \cite{HLM} a deep connection between the Brauer tree of the principal $\ell$-block and the $\ell$-adic cohomology groups of the Deligne-Lusztig variety $\X(c)$.
They have conjectured that this connection remains valid even in the cases where the shape of the tree or its planar embedding is not explicitly known (see Conjectures \hyperref[2conj1]{(HLM)} and  \hyperref[2conj2]{(HLM+)}). Furthermore, they have suggested that  the cohomology with modular coefficients should give enough information to prove the conjecture in its full generality. This is the geometric approach that we will follow throughout this paper. It relies on the fact that the complex $b\Rgc(\Y(\dot c),\Lambda)$ is  perfect and thus provides many projective modules. 

\sk

In order to determine the shape of the Brauer tree, we must find every indecomposable projective $\Lambda G$-module lying in the principal block and compute their characters. Such projective modules admit no direct algebraic construction via Harish-Chandra induction, because they might have a cuspidal head. Drawing inspiration from Lusztig's work \cite{Lu}, we show that they can be obtained by taking suitable eigenspaces of $F$ on $b\Rgc(\Y(\dot c),\Lambda)$. However, there is a price to pay since we have to make the following assumption:

\centers{\begin{tabular}{cp{13cm}} \hskip-1mm $\mathbf{(W)}$ &  For all minimal eigenvalues $\lambda$ of  $F$,  the generalized $(\lambda)$-eigenspace of $F$ on $b\Hc^\bullet(\Y(\dot c),\Lambda)$ is torsion-free. \end{tabular}}

\noindent We call here "minimal" the eigenvalues of $F$ on the cohomology group in middle degree. Under this precise assumption, in Section 3 we give  a general proof of Conjecture \hyperref[2conj1]{(HLM)}
of Hiss-L\"ubeck-Malle. 

\sk

There is strong evidence that the previous assumption should be valid for any eigenvalue of $F$ \cite{Du2} which means that the following should hold:

\centers{\begin{tabular}{cp{13cm}} \hskip-1mm $\mathbf{(S)}$ &  The $\Lambda$-modules $b\Hc^i(\Y(\dot c),\Lambda)$ are torsion-free. \end{tabular}}

\noindent This technical assumption will be discussed in a subsequent paper. Using generalized Gelfand-Graev representations we can actually prove that it holds in a majority of cases, which justifies our approach:

\begin{thm3}[\cite{Du3}] Assume that $\G$ has no factor of type $E_7$ or $E_8$ and that $p$ is a good prime number. Then assumption $(\mathrm{S})$ holds.
\end{thm3}

\noindent Note that the torsion-free property is really specific to the case of Deligne-Lusztig varieties associated to Coxeter elements. The cohomology of varieties associated to other elements should have a non-trivial torsion component in general (see \cite[Remark 5.37]{Du2}).

\pagebreak

 In this paper, we shall instead focus on all the consequences we can draw from such an assumption. Our main result is the complete determination of the complex $b\Rgc(\Y(\dot c),\Lambda)$ in terms of the projective modules lying in the block.  Surprisingly, our representative  turns out to be exactly the complex attached to a Brauer tree in \cite{Ri}:

\begin{thm2} Under the assumption $\mathrm{(S)}$, the complex $b\Rgc(\Y(\dot c),\Lambda)$ is homotopy equivalent to the Rickard complex associated to the Brauer tree of the principal $\ell$-block of $G$, shifted by the length of $c$.
\end{thm2}

\noindent From that observation we deduce several important results:

\begin{itemize} 

\item[$\bullet$] the cohomology complex induces the derived equivalence predicted by the geometric version of Brou\'e's conjecture \cite{BMa2} (see Theorem \ref{4thm3});

\item[$\bullet$] this equivalence is perverse in the sense of \cite{CR1} (see Theorem \ref{4thm4});

\item[$\bullet$] the planar embedding of the Brauer tree can be read out from the eigenvalues of $F$ (see Theorem \ref{4thm2}). 

\end{itemize}

\noindent Together with Theorem A,  this gives new results for the geometric version of Brou\'e's conjecture.  This extends significantly the previous work of Puig \cite{Pu2} (for $\ell \, | \, q-1$), Rouquier \cite{Rou} (for $\ell \, | \, \phi_h(q)$ and $r=1$) and Bonnaf\'e-Rouquier \cite{BR2} (for $\ell \, |\, \phi_h(q)$ and $(\G,F)$ of type A$_n$). We also obtain new planar embedded Brauer tree for groups of type ${}^2$G$_2$ and F$_4$ with $p \neq 2,3$ (compare with \cite{Hiss} and \cite{HL}).

\sk

 This paper is divided into four parts: in the first section, we introduce basic notation and recall standard constructions in the modular representation theory of finite reductive groups, formulated in the language of derived categories. In Section 2 we present what we call the \emph{Coxeter case} and collect different results (both geometric and group-theoretic) that have been obtained so far for principal $\ell$-blocks and their Brauer trees in this particular case. Section 3 is devoted to the proof of the conjecture \hyperref[2conj1]{(HLM)} of Hiss, L\"ubeck and Malle. We show that under the assumption $\mathrm{(W)}$, the Brauer tree can be deduced from the $\ell$-adic cohomology of $\X(c)$. Finally, we use assumption $\mathrm{(S)}$ in Section 4 to determine an explicit representative of the cohomology complex. As a byproduct, we obtain a proof of the geometric version of Brou\'e's conjecture (as always in the Coxeter case) as well as and the planar embedding of the Brauer tree.

\section{Preliminaries}

\subsection{Some homological algebra\label{1se1}}

We start by recalling some standard notions of homological algebra that we will use throughout this paper.

\mk

\noindent \thesubsection.1. \textbf{Module categories and usual functors.} If $\mathscr{A}$ is an abelian category, we will denote by $C(\mathscr{A})$ the category of cochain complexes, by $K(\mathscr{A})$ the corresponding homotopy category and by $D(\mathscr{A})$ the derived category. We shall use the superscript notation $-$, $+$ and $b$ to denote the full subcategories of bounded above, bounded below or bounded complexes. We will always consider the case where $\mathscr{A} = A$-$\mathrm{Mod}$ is the module category over any ring $A$ or the full subcategory $A$-$\mathrm{mod}$ of finitely generated modules. This is actually not a strong restriction, since any small category can be embedded into some module category \cite{Mit}. Since the categories $A$-$\mathrm{Mod}$ and $A$-$\mathrm{mod}$ have enough projective objects, one can define the usual derived bifunctors $\mathrm{RHom}_{A}^\bullet$ and ${\ol}_A$. 

\sk

Let $H$ be a finite group and $\ell$ be a prime number.  We fix an $\ell$-modular system $(K, \Lambda, k)$ consisting of a finite extension $K$ of the field of $\ell$-adic numbers $\mathbb{Q}_\ell$, the integral closure $\Lambda$ of the ring of $\ell$-adic integers in $K$ and the residue field $k$ of the local ring $\Lambda$. We assume moreover that the field $K$ is big enough for $H$, so that it contains the $e$-th roots of unity,  where $e$ is the exponent of $H$. In that case, the algebra $KH$ is split semi-simple. 

\sk

From now on, we shall focus on the case where $A = \mathcal{O} H$, with $\mathcal{O}$ being any ring among $(K,\Lambda,k)$. By studying the modular representation theory of $H$ we mean studying the module categories $\mathcal{O} H$-$\mathrm{mod}$ for various $\mathcal{O}$, and also the different connections between them. In this paper, most of the representations will arise in the cohomology of some complexes and we need to know how to pass from one coefficient ring to another. The scalar extension and $\ell$-reduction have a derived counterpart: if $C$ is any bounded complex of $\Lambda H$-modules we can form $KC = C \otimes_\Lambda K$ and $\overline{C} = kC = C {\ol}_\Lambda k$. Since $K$ is a flat $\Lambda$-module, the cohomology of the complex $KC$ is exactly  the scalar extension of the cohomology of $C$. However this does not apply to $\ell$-reduction, but the obstruction can be related to the torsion:

\begin{thm}[Universal coefficient theorem]\label{1thm1}Let $C$ be a bounded complex of $\Lambda H$-modules. Assume that the terms of $C$ are free over $\Lambda$. Then for all $n \geq 1$ and $i \in \mathbb{Z}$, there exists a short exact sequence of $\Lambda H$-modules

\centers[1]{$0 \longrightarrow \mathrm{H}^i(C)\otimes_\Lambda \Lambda / \ell^n \Lambda  \longrightarrow \mathrm{H}^i\big(C \, {\ol}_\Lambda\, \Lambda / \ell^n \Lambda \big) \longrightarrow \mathrm{Tor}_1^\Lambda(\mathrm{H}^{i+1}(C), \Lambda/\ell^n \Lambda) \longrightarrow 0.$}

\end{thm}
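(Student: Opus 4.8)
The plan is to prove this in two steps: first establish the case $n=1$ directly, then bootstrap to general $n$, or — more cleanly — to realize the whole statement as an instance of the classical universal coefficient theorem once we check that the hypotheses transfer to the category of $\Lambda H$-modules.

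\textbf{Step 1: Reduce to a statement about complexes of $\Lambda$-modules.} The essential point is that the quotient $\Lambda/\ell^n\Lambda$ has a free resolution of length one over $\Lambda$, namely $0 \to \Lambda \xrightarrow{\ell^n} \Lambda \to \Lambda/\ell^n\Lambda \to 0$, since $\Lambda$ is a discrete valuation ring (or at least a regular local ring of dimension one) and $\ell^n$ is a non-zero-divisor. This is exactly the input needed for the algebraic universal coefficient theorem. I would first forget the $\Lambda H$-module structure and regard $C$ as a bounded complex of free $\Lambda$-modules; the conclusion to aim for is the short exact sequence of $\Lambda$-modules. The $\Lambda H$-equivariance will then come for free because every map in the construction is manifestly $H$-equivariant.

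\textbf{Step 2: Construct the sequence.} Tensoring the resolution above with $C$ yields a short exact sequence of complexes
\[
0 \longrightarrow C \xrightarrow{\ \ell^n\ } C \longrightarrow C\,{\ol}_\Lambda\,\Lambda/\ell^n\Lambda \longrightarrow 0,
\]
where I use that each term $C^j$ is $\Lambda$-free, so $\mathrm{Tor}_1^\Lambda(C^j,\Lambda/\ell^n\Lambda)=0$ and the derived tensor product agrees with the ordinary one term by term (in particular $C\,{\ol}_\Lambda\,\Lambda/\ell^n\Lambda \simeq C/\ell^n C$). The associated long exact sequence in cohomology reads
\[
\cdots \to \mathrm{H}^i(C)\xrightarrow{\ \ell^n\ }\mathrm{H}^i(C)\to \mathrm{H}^i(C\,{\ol}_\Lambda\,\Lambda/\ell^n\Lambda)\to \mathrm{H}^{i+1}(C)\xrightarrow{\ \ell^n\ }\mathrm{H}^{i+1}(C)\to\cdots,
\]
and chopping it into short exact pieces gives
\[
0 \to \mathrm{coker}\big(\mathrm{H}^i(C)\xrightarrow{\ell^n}\mathrm{H}^i(C)\big)\to \mathrm{H}^i(C\,{\ol}_\Lambda\,\Lambda/\ell^n\Lambda)\to \ker\big(\mathrm{H}^{i+1}(C)\xrightarrow{\ell^n}\mathrm{H}^{i+1}(C)\big)\to 0.
\]
It remains only to identify the outer terms: $\mathrm{coker}(\ell^n\colon M\to M)=M\otimes_\Lambda\Lambda/\ell^n\Lambda$ and $\ker(\ell^n\colon M\to M)=\mathrm{Tor}_1^\Lambda(M,\Lambda/\ell^n\Lambda)$ for any $\Lambda$-module $M$, the latter again from the length-one resolution of $\Lambda/\ell^n\Lambda$. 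All maps being $H$-equivariant, this is a sequence of $\Lambda H$-modules, as claimed.

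\textbf{The main obstacle} is essentially bookkeeping rather than mathematical depth: one must be careful that the multiplication-by-$\ell^n$ map on $C$ really does fit into a short exact sequence of \emph{complexes} — this uses $\Lambda$-freeness of each $C^j$ in an essential way, and it is the only place the freeness hypothesis enters — and one must correctly match up the connecting homomorphism of the long exact sequence with the boundary map $\mathrm{H}^i(C\,{\ol}\,\Lambda/\ell^n\Lambda)\to \mathrm{H}^{i+1}(C)$ so that the surjection lands in the right $\mathrm{Tor}$ group with the correct index shift. There is no genuine difficulty with the group action; it is automatic since $\ell^n$ is central. So I expect the proof to be short, with the only care needed being the identification of $\ker$ and $\mathrm{coker}$ of $\ell^n$ with the $\mathrm{Tor}$ and tensor terms, which is where the hypothesis that $\Lambda$ is the ring of integers in a local field (hence a PID-like setting) is used.
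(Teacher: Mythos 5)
Your proof is correct and complete. The paper does not actually prove Theorem \ref{1thm1}: it records it without proof as the standard universal coefficient theorem, so there is no paper argument to compare against. Your route via the Bockstein short exact sequence of complexes
\[
0 \longrightarrow C \xrightarrow{\ \ell^n\ } C \longrightarrow C/\ell^n C \longrightarrow 0,
\]
obtained from $\Lambda$-freeness of the terms of $C$, followed by the long exact sequence in cohomology and the identifications $\mathrm{coker}(\ell^n\colon M\to M)\cong M\otimes_\Lambda \Lambda/\ell^n\Lambda$ and $\ker(\ell^n\colon M\to M)\cong \mathrm{Tor}_1^\Lambda(M,\Lambda/\ell^n\Lambda)$ from the length-one free resolution of $\Lambda/\ell^n\Lambda$, is exactly the standard textbook argument, and you correctly note that $H$-equivariance is automatic since $\ell^n$ is central. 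One small refinement: $\ell$ need not be a uniformizer of $\Lambda$ (the extension $K/\mathbb{Q}_\ell$ may be ramified), but all that is used is that $\ell^n$ is a non-zero-divisor in the domain $\Lambda$, which certainly holds, so your appeal to the DVR structure is slightly stronger than necessary but harmless.
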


In particular, whenever there is no torsion in both $C$ and $\mathrm{H}^\bullet(C)$ then the cohomology of $\overline{C}$ is exactly the $\ell$-reduction of the cohomology of $C$.


\mk

\noindent \thesubsection.2. \textbf{Perfect complexes.} In the derived category, any acyclic complex is isomorphic to zero. This can be generalized  to the homotopy category if we restrict ourselves to a specific class of complexes. Recall that a complex $C$ of $\mathcal{O} H$-modules is said to be \emph{perfect} if it is quasi-isomorphic to a bounded complex of finitely generated projective $\mathcal{O} H$-modules. The value of a derived bifunctor on any perfect complex is obtained from the original functor: more precisely, if $C$ is a perfect complex and $C'$ is any complex in $C(\mathcal{O} H$-$\mathrm{Mod})$ then there exists isomorphisms in $D(\mathbb{Z}$-$\mathrm{Mod})$:

\centers{$ C \, {\ol}_{\mathcal{O}H} \, C' \, \simeq \, C \otimes_{\mathcal{O}H} C' \qquad \text{and} \qquad \mathrm{RHom}_{\mathcal{O}H}^\bullet(C,C') \, \simeq \, \mathrm{Hom}_{\mathcal{O}H}^\bullet(C,C').$}
 
\noindent In particular the natural functor $K^b(\mathcal{O}H$-$\mathrm{proj}) \longrightarrow D(\mathcal{O}H$-$\mathrm{Mod})$ induces an equivalence between the homotopy category of bounded complex of finitely generated projective modules and the full subcategory of $D(\mathcal{O}H$-$\mathrm{Mod})$ of perfect complexes.

\sk

As in the derived case, one can obtain precise concentration properties for complexes in $K^b(\mathcal{O}H$-$\mathrm{proj})$:

\begin{lem}\label{1lem1}Let $C \in K^b(\mathcal{O}H$-$\mathrm{proj})$. Assume that the cohomology of $C$ vanishes outside the degrees $m, \ldots,M$. Then $C$ is homotopy equivalent to a complex

\centers{$ \cdots \longrightarrow 0 \longrightarrow P_{m-1} \longrightarrow  P_m \longrightarrow \cdots \longrightarrow P_M \longrightarrow 0 \longrightarrow \cdots $}

\noindent whose terms are finitely generated projective modules, concentrated in degrees $m-1,m,\ldots,M$. Moreover, $P_{m-1}$ can be chosen to be zero in the following cases:

\begin{itemize}

\item[$\mathrm{(a)}$] $\mathcal{O}$ is a field;

\item[$\mathrm{(b)}$] $\mathcal{O}=\Lambda$ and the group $\mathrm{H}^m(C)$ is torsion-free.

\end{itemize}
\end{lem}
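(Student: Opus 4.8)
The plan is to replace $C$ by a minimal representative and then pin down its support by reducing modulo $\ell$ and applying the universal coefficient theorem; both the lower bound $m-1$ and the upper bound $M$ then drop out at once, and the refinements (a), (b) come from a single $\mathrm{Tor}$ term, so there is no need for a separate truncation argument at the top.

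The ring $\mathcal OH$ is semiperfect: its Jacobson radical contains $\ell\mathcal OH$, the quotient being the semisimple algebra $kH/\mathrm{rad}\,kH$, and idempotents lift by $\ell$-adic completeness. Hence $C$ is homotopy equivalent to a \emph{minimal} bounded complex $P^\bullet$ of finitely generated projective modules, characterised by $d^i(P^i)\subseteq\mathrm{rad}(\mathcal OH)\,P^{i+1}$ for all $i$, and it is enough to show $P^i=0$ for $i\notin\{m-1,\dots,M\}$ (and $P^{m-1}=0$ under (a) or (b)). If $\mathcal O$ is a field there is nothing to reduce (and for $\mathcal O=K$ the algebra $KH$ is semisimple, making the statement trivial); if $\mathcal O=\Lambda$, then $\overline P{}^\bullet:=P^\bullet\otimes_\Lambda k$ is again minimal, now over $kH$, and $P^i=0$ iff $\overline P{}^i=0$ by Nakayama's lemma. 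I would then use the structural fact that a nonzero minimal bounded complex of finitely generated projectives over a finite-dimensional self-injective algebra such as $kH$ has nonzero terms exactly in the range of degrees where its cohomology does not vanish: at the lowest degree $n$ with $\overline P{}^n\neq 0$, if $\ker d^n=0$ then $d^n$ is a split monomorphism, the projective module $\overline P{}^n$ being also injective, so $d^n(\overline P{}^n)$ would be a nonzero direct summand of $\overline P{}^{n+1}$ sitting inside its radical — impossible — whence $\mathrm H^n(\overline P{}^\bullet)=\ker d^n\neq 0$; the top degree is handled dually, using only that the top term is projective.

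It remains to locate the cohomology of $\overline C:=C\otimes_\Lambda k$ (for $\mathcal O=\Lambda$; for a field take $\overline C=C$). Theorem~\ref{1thm1} gives exact sequences $0\to\mathrm H^i(C)\otimes_\Lambda k\to\mathrm H^i(\overline C)\to\mathrm{Tor}_1^\Lambda(\mathrm H^{i+1}(C),k)\to 0$, and since $\mathrm H^j(C)=0$ for $j\notin\{m,\dots,M\}$ we get $\mathrm H^i(\overline C)=0$ for $i<m-1$ and for $i>M$; thus $\overline P{}^\bullet$, hence $P^\bullet$, is concentrated in degrees $m-1,\dots,M$, proving the first assertion. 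For (a), over a field $\overline C=C$ already has no cohomology below degree $m$, so $P^\bullet$ is concentrated in $\{m,\dots,M\}$. For (b), if $\mathcal O=\Lambda$ and $\mathrm H^m(C)$ is torsion-free it is free over the discrete valuation ring $\Lambda$, so $\mathrm{Tor}_1^\Lambda(\mathrm H^m(C),k)=0$ and the sequence above forces $\mathrm H^{m-1}(\overline C)=0$, giving $P^{m-1}=0$ again. The one genuinely non-routine ingredient is the machinery of minimal complexes over the semiperfect ring $\mathcal OH$ — their existence, the radical condition on differentials, and the fact that minimality survives $\ell$-reduction — which is standard but should be quoted with care; the appearance of $m-1$ rather than $m$ in general is precisely the extra degree forced by the $\mathrm{Tor}_1$ term.
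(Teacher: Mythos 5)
Your proof is correct, but it takes a genuinely different route from the paper's. The paper works directly with the given complex $C$ and ``peels off'' contractible summands from both ends: at the top, the differential into the last term is a split surjection onto a projective module; at the bottom, it argues a split injection in degree $r<m$ whenever $\mathrm{Coker}\,d_r$ is torsion-free, which it verifies via the chain of identifications $\mathrm{Coker}\,d_r\simeq\mathrm{Im}\,d_{r+1}$ for $r<m-1$ and $\mathrm{Coker}\,d_{m-1}/\mathrm{H}^m(C)\simeq\mathrm{Im}\,d_m$ for $r=m-1$, using that $P_r$ is $(H,1)$-injective (a short exact sequence of $\Lambda H$-lattices splits iff it splits over $\Lambda$). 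You instead pass to the minimal representative over the semiperfect ring $\mathcal{O}H$, reduce modulo $\ell$, and invoke the universal coefficient theorem to locate the support of $\mathrm{H}^\bullet(\overline C)$; the boundary claims for minimal complexes over the self-injective algebra $kH$ then determine the support of the minimal complex, and Nakayama lifts the answer back to $\Lambda$. Your route front-loads more machinery (existence and $\ell$-stability of minimal complexes, self-injectivity of $kH$), but it localises the entire subtlety of the lemma into a single $\mathrm{Tor}_1^\Lambda(\mathrm{H}^m(C),k)$ term, which makes case~(b) immediate rather than requiring the paper's separate bookkeeping at degree $m-1$; the paper's argument is more elementary and self-contained, at the cost of repeating the splitting argument degree by degree. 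One small stylistic caveat: when you say the minimal complex has nonzero terms ``exactly in the range of degrees where its cohomology does not vanish,'' read ``range'' as the interval between the extreme degrees of nonvanishing cohomology --- your boundary arguments establish precisely that, and that is all you use.
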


\begin{proof} Let us write $C$ as  $\cdots \longrightarrow P_r \mathop{\longrightarrow}\limits^{d_r} P_{r+1} \longrightarrow 0$ with $r \geq M$. By assumption, the cohomology of $C$ vanishes in degree $r+1$. The boundary map $d_r$ is surjective and splits since $P_{r+1}$ is a projective module. Therefore the complex $0 \longrightarrow P_{r+1} \longrightarrow P_{r+1} \longrightarrow 0$ is a direct summand of $C$, and being homotopy equivalent to zero it can be removed. 

\sk

If $\mathcal{O}$ is one of the fields $K$ or $k$, then every projective $\mathcal{O} H$-module is injective, and we can again remove successively the terms $P_i$ for $i < m$. The case where $\mathcal{O} = \Lambda$ requires more attention. The complex $C$ can be written as $0 \longrightarrow P_r \mathop{\longrightarrow}\limits^{d_r} P_{r+1} \longrightarrow  \cdots $ with $r < m$ and the map $d_r$ being injective. We claim that there exists a retraction of $d_r$ if  $\mathrm{Coker}\, d_r$ is torsion-free. Indeed, $P_r$ is $(H,1)$-injective by \cite[Theorem 19.12]{CuRe} so that the short exact sequence $0 \longrightarrow P_r \longrightarrow P_{r+1} \longrightarrow \mathrm{Coker}\, d_r \longrightarrow 0$ splits over $\Lambda H$ whenever it splits over $\Lambda$. Now, if $r < m-1$ then  by assumption $\mathrm{Im} \, d_r = \mathrm{Ker} \, d_{r+1}$ so that $\mathrm{Coker} \, d_r \simeq \mathrm{Im} \, d_{r+1}$ is torsion-free (it is a submodule of a free module). If $m=r-1$, we can observe that the quotient  $\mathrm{Coker} \, d_{m-1} / \mathrm{H}^m(C) \simeq \mathrm{Im}\, d_m$ is torsion-free, and hence $\mathrm{Coker} \, d_{m-1} $ is torsion-free whenever $\mathrm{H}^m(C)$ is. 
\end{proof}

\subsection{Cohomology of a quasi-projective variety\label{1se2}}

In the ordinary Deligne-Lusztig theory, representations of finite reductive groups arise from the cohomology of some varieties acted on by the group. In the modular setting, we shall be interested not only in the cohomology of theses varieties but also in  complexes representing the cohomology in the derived category. This gives much more information, some of which has already been collected by Brou\'e \cite{Bro1}, \cite{BMa2}, and more recently by Bonnaf\'e and Rouquier \cite{BR1}.

\mk

\noindent \thesubsection.1. \textbf{Rouquier's construction.} Let $\X$ be a quasi-projective algebraic variety defined over $\overline{\mathbb{F}}_p$. We assume that $\X$ is  acted on by a finite group $H$ and by a monoid of endomorphisms $\Upsilon$ normalizing $H$. We shall always assume that the prime number $p$ (the defining characteristic of all the varieties involved) is different from $\ell$ (associated to the modular system). 

\sk

Let  $\mathcal{O}$ be any ring among $(K,\Lambda,k)$. Rouquier has constructed in \cite{Rou} a bounded complex $\Rgc(\X,\mathcal{O})$ of $\mathcal{O} H \rtimes \Upsilon$-modules representing the $\ell$-adic cohomology with compact support of $\X$. In other words, we have $\mathrm{H}^\bullet\big(\Rgc(\X,\mathcal{O})\big) \simeq \Hc^\bullet(\X,\mathcal{O})$ as $\mathcal{O} H \rtimes \Upsilon$-modules. This construction is particularly adapted to the modular setting, since the cohomology complexes behave well with respect to scalar extension and $\ell$-reduction. We have indeed in $D^b(\mathcal{O}H\rtimes \Upsilon$-$\mathrm{mod})$:

\centers{$  \Rgc(\X,\Lambda) \, {\ol}_{\Lambda } \, \mathcal{O} \, \simeq \, \Rgc(\X, \mathcal{O}).$}

\noindent In particular, the universal coefficient theorem (Theorem \ref{1thm1}) will hold for $\ell$-adic cohomology with compact support.

\sk

Let us forget about the action of $\Upsilon$ for the moment. By construction, the terms of  $\Rgc(\X,\Lambda)$ are far from being finitely generated. Nevertheless, we can, up to homotopy equivalence, find a representative with good finiteness properties \cite[Section 2.5.1]{Rou}:

\begin{thm}[Rouquier]\label{1thm2}Let $\X$ be a quasi-projective variety acted on by a finite group $H$. Denote by $\mathcal{S} = \{\mathrm{Stab}_H(x)\, | \, x\in \X\}$ the set of stabilizers of points of $\X$. Then $\Rgc(\X,\mathcal{O})$ is homotopy equivalent to a complex $\mathscr{C}$  whose terms are direct summands of finite sums of permutation modules $\mathcal{O}[H/S]$ for various $S \in \mathcal{S}$.
\end{thm}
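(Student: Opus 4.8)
The plan is to reduce the statement to the case of a projective variety by means of a suitable compactification, and then to build the desired representative by an inductive device on a stratification of $\X$, using Čech-type (hypercohomology) resolutions. First I would recall Rouquier's original construction of $\Rgc(\X,\mathcal{O})$: one chooses an $H$-equivariant open immersion $\X \hookrightarrow \overline{\X}$ into a projective $H$-variety with complement $Z = \overline{\X}\setminus\X$, so that $\Rgc(\X,\mathcal{O})$ fits in a distinguished triangle with $\Rgc(Z,\mathcal{O})$ and the full cohomology complex $\mathrm{R}\Gamma(\overline{\X},\mathcal{O})$. Since both $Z$ and $\overline{\X}$ have strictly smaller dimension data than $\X$ in an appropriate sense (or one proceeds by induction on $\dim\X$, the base case $\dim\X=0$ being a finite $H$-set, whose cohomology is literally a permutation module $\mathcal{O}[\X]$ placed in degree $0$), it suffices to treat the projective case; a cone of a map between complexes of the allowed form is again of the allowed form, and the class of complexes homotopy equivalent to bounded complexes whose terms are direct summands of finite sums of $\mathcal{O}[H/S]$, $S\in\mathcal S$, is closed under shifts and mapping cones.

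For $\X$ projective, the key input is that the cohomology $\mathrm{R}\Gamma(\X,\mathcal{O})$ can be computed by a finite affine open cover $\mathcal U = (U_i)_{i\in I}$; since $\X$ is quasi-projective one may take $\mathcal U$ to be $H$-stable (replace an arbitrary affine cover by the collection of its $H$-translates, indexed by $I\times H$, which is still finite). The associated Čech complex of $\mathrm{R}\Gamma$ of the finite intersections $U_J = \bigcap_{j\in J}U_j$ is a bounded complex of $\mathcal{O}H$-modules, and each term is an induced module $\bigoplus_{J\in (\text{orbit reps})}\mathrm{Ind}_{H_J}^{H}\mathrm{R}\Gamma(U_J,\mathcal{O})$, where $H_J = \mathrm{Stab}_H(J)$. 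One then replaces each $\mathrm{R}\Gamma(U_J,\mathcal O)$ by a Rouquier representative over the smaller group $H_J$ — recursively, since the $U_J$ are again quasi-projective — and applies the exact functor $\mathrm{Ind}_{H_J}^H$, which sends a direct summand of a sum of $\mathcal O[H_J/S]$ to a direct summand of a sum of $\mathcal O[H/S]$, with $S$ still a point stabilizer. Assembling these via the total complex of the Čech double complex yields a bounded complex of the required form representing $\mathrm{R}\Gamma(\X,\mathcal O)$, hence (through the triangle above) one for $\Rgc(\X,\mathcal O)$. Throughout one uses that $\Rgc$ commutes with $-\ol_\Lambda\mathcal O$ (stated in \thesubsection.1) so the argument can be run uniformly over $\mathcal{O}\in\{K,\Lambda,k\}$, and that the permutation modules $\mathcal O[H/S]$ are the ones genuinely controlling the statement.

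The main obstacle is the bookkeeping needed to make the recursion terminate and to control \emph{which} stabilizers occur: one must ensure that when passing to intersections $U_J$ and to induction from $H_J$, the point stabilizers that appear are still of the form $\mathrm{Stab}_H(x)$ for $x\in\X$, not merely subgroups of such. This is where one needs to choose the cover carefully — concretely, take $\mathcal U$ to consist of complements of $H$-invariant hyperplane sections, or refine any cover by the fixed-point strata $\X^{\langle S\rangle}$, so that stabilizers of cells in the nerve are realized as honest point-stabilizers. A secondary technical point is the passage to direct summands: the Čech construction naturally produces actual permutation modules, but the reduction to finitely generated projective-friendly representatives (needed because $\mathrm{R}\Gamma(U_J,\mathcal O)$ for affine $U_J$ is not yet finite) forces one to allow summands, which is harmless since the class is closed under taking summands by construction. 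I would expect the verification that cones and the recursion stay inside the stated class to be routine once the cover is fixed, so the heart of the proof is really the choice of an $H$-stable cover adapted to $\mathcal S$.
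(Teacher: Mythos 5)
The paper does not prove this statement; it is quoted verbatim from Rouquier \cite[Section~2.5.1]{Rou}, so there is no in-paper proof to compare against. Evaluated on its own merits, your proposal has two genuine gaps that are not just ``bookkeeping.''

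First, the recursion does not terminate. The noetherian induction on $\dim\X$ only handles the closed complement $Z$; the projective case must be handled unconditionally. In that case the pieces $U_J$ of the \v{C}ech cover can have the same dimension as $\X$ (already for $|J|=1$), so you instead claim to recurse on the ``smaller group $H_J$.'' But $H_J$ need not be a proper subgroup of $H$: if $U_j$ is $H$-stable (which happens, e.g., precisely for the remedies you suggest, complements of $H$-invariant hyperplane sections or covers pulled back from $\X/H$), then $H_J=H$, and the recursion stalls with a variety of the same dimension and the same group. There is no well-founded order on which your induction is running.

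Second, the stabilizer control, which you correctly identify as the heart of the matter, is not actually established. For $x\in U_J$ one has $\mathrm{Stab}_{H_J}(x)=\mathrm{Stab}_H(x)\cap H_J$, which is in general a \emph{proper} subgroup of $\mathrm{Stab}_H(x)$: $g\cdot x=x$ does not force $g\cdot J=J$, only that $x\in U_J\cap U_{g\cdot J}$. Applying $\mathrm{Ind}_{H_J}^H$ to $\mathcal{O}[H_J/\mathrm{Stab}_{H_J}(x)]$ then produces $\mathcal{O}[H/(\mathrm{Stab}_H(x)\cap H_J)]$, which is not of the form $\mathcal{O}[H/S]$ with $S\in\mathcal{S}$, and the theorem does not allow arbitrary subgroups of point stabilizers. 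The two remedies you float are in tension with each other: an $H$-stable cover keeps the subgroups honest but, as noted above, makes $H_J=H$ and kills the recursion; a non-$H$-stable cover restores $H_J\subsetneq H$ but breaks the stabilizer bookkeeping. Rouquier's actual argument avoids the \v{C}ech nerve altogether and works instead with a stratification of $\X$ adapted to the orbit types of the $H$-action, where the distinguished triangles reduce to strata on which the stabilizers are genuinely constant (up to conjugacy) and equal to point stabilizers; that is the ingredient your proposal is missing.
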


\begin{rmk}\label{1rmk1}Note that $\mathscr{C}$ is not a complex of $\mathcal{O}H \rtimes \Upsilon$-modules for it can only inherit an action of $\Upsilon$ up to homotopy. 
\end{rmk}

\begin{cor}\label{1cor1}Assume that the order of the stabilizer of any point is invertible in $\mathcal{O}$. Then $\Rgc(\X,\mathcal{O})$ is a perfect complex of $\mathcal{O} H$-modules.
\end{cor}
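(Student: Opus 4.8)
The plan is to combine Rouquier's finiteness theorem (Theorem~\ref{1thm2}) with the characterization of perfect complexes recalled above. By Theorem~\ref{1thm2}, the complex $\Rgc(\X,\mathcal{O})$ is homotopy equivalent to a bounded complex $\mathscr{C}$ whose terms are direct summands of finite sums of permutation modules $\mathcal{O}[H/S]$ for $S$ ranging over the set $\mathcal{S}$ of point-stabilizers. Since homotopy equivalence preserves perfectness, it suffices to show that each such term is a projective $\mathcal{O}H$-module; a direct summand of a projective module is projective, and a finite direct sum of projectives is projective, so the whole point reduces to proving that $\mathcal{O}[H/S]$ is $\mathcal{O}H$-projective whenever $|S|$ is invertible in $\mathcal{O}$.

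First I would recall that $\mathcal{O}[H/S] \simeq \mathrm{Ind}_S^H \mathcal{O}$, the induction of the trivial $\mathcal{O}S$-module. The hypothesis that the order of every point-stabilizer is invertible in $\mathcal{O}$ means exactly that $|S|$ is a unit in $\mathcal{O}$ for all $S \in \mathcal{S}$; equivalently $\ell \nmid |S|$ (the statement is vacuous when $\mathcal{O}=K$, since there every finite group order is invertible). Under this hypothesis $\mathcal{O}S$ is a semisimple ring by Maschke's theorem, so in particular the trivial module $\mathcal{O}$ is a projective $\mathcal{O}S$-module. Induction sends projectives to projectives (it is left adjoint to the exact restriction functor, or concretely $\mathrm{Ind}_S^H$ of a summand of a free $\mathcal{O}S$-module is a summand of a free $\mathcal{O}H$-module since $\mathrm{Ind}_S^H \mathcal{O}S \simeq \mathcal{O}H$), whence $\mathcal{O}[H/S]$ is a finitely generated projective $\mathcal{O}H$-module.

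Putting these together: $\mathscr{C}$ is a bounded complex of finitely generated projective $\mathcal{O}H$-modules, and $\Rgc(\X,\mathcal{O})$ is homotopy equivalent (hence quasi-isomorphic) to it, so $\Rgc(\X,\mathcal{O})$ is perfect by definition. I do not anticipate a serious obstacle here; the only point requiring a little care is checking that the finiteness statement of Theorem~\ref{1thm2} really does produce \emph{finitely generated} terms with stabilizers drawn from $\mathcal{S}$ (so that the invertibility hypothesis applies uniformly), and noting that being a direct summand is closed under the relevant operations. One could alternatively invoke the trace/transfer argument directly: when $|S|$ is invertible, $\frac{1}{|S|}\sum_{s\in S} s$ splits the counit $\mathcal{O}H \otimes_{\mathcal{O}S} \mathcal{O} \to \mathcal{O}[H/S]$ appropriately, exhibiting $\mathcal{O}[H/S]$ as relatively $\mathcal{O}$-projective and hence, since $\mathcal{O}$ is self-injective (or by Higman's criterion), projective over $\mathcal{O}H$. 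Either route closes the argument.
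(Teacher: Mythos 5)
Your proof is correct and follows exactly the route the paper intends (the paper states the corollary without proof, as an immediate consequence of Theorem~\ref{1thm2}): each term of the homotopy-equivalent representative $\mathscr{C}$ is a summand of a finite sum of modules $\mathcal{O}[H/S]\simeq\mathrm{Ind}_S^H\mathcal{O}$, which is projective because $|S|$ is invertible in $\mathcal{O}$. The only addition I would make is that you could have shortened the middle by citing Higman's criterion directly rather than offering Maschke and the transfer argument as alternatives, but both are standard and correct.
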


Recall that the $\ell$-adic cohomology with compact support of any irreducible affine variety of dimension $d$ is concentrated in degrees $d,\ldots,2d$, and this for any coefficient ring among $(K,\Lambda,k)$. Using Lemma \ref{1lem1} and Theorem \ref{1thm1} we deduce the following:

\begin{cor}\label{1cor2}Let $\X$ be an irreducible affine variety of dimension $d$. Assume that the order of the stabilizer of any point is prime to $\ell$. Then $\Rgc(\X,\Lambda)$ can be represented by a complex of finitely generated projective $\Lambda H$-modules concentrated in degrees $d,\ldots,2d$. In particular, $\Hc^d(\X,\Lambda)$ is torsion-free.
\end{cor}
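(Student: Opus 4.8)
The plan is to combine the three ingredients just established: the perfectness furnished by Corollary~\ref{1cor1}, the vanishing of the compactly supported $\ell$-adic cohomology of an affine variety outside a range of degrees, and the refined concentration statement for perfect complexes in Lemma~\ref{1lem1}. Since the order of every point stabilizer is prime to~$\ell$, it is invertible in~$\Lambda$, so Corollary~\ref{1cor1} shows that $\Rgc(\X,\Lambda)$ is perfect; I fix a representative $C\in K^b(\Lambda H\text{-}\mathrm{proj})$. As $\X$ is irreducible affine of dimension~$d$, one has $\mathrm{H}^\bullet(C)\simeq\Hc^\bullet(\X,\Lambda)$ concentrated in degrees $d,\dots,2d$. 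Applying Lemma~\ref{1lem1} with $m=d$ and $M=2d$ presents $C$, up to homotopy equivalence, as a complex $P_{d-1}\to P_d\to\cdots\to P_{2d}$ of finitely generated projective $\Lambda H$-modules placed in degrees $d-1,\dots,2d$, and by part~(b) of that lemma the term $P_{d-1}$ may be dropped as soon as we know that $\Hc^d(\X,\Lambda)=\mathrm{H}^d(C)$ is torsion-free.

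To obtain that torsion-freeness I would pass to the residue field. The terms of $C$ are projective over $\Lambda H$, hence free over~$\Lambda$, and $\overline C=C\,{\ol}_\Lambda\,k$ represents $\Rgc(\X,k)$, so $\mathrm{H}^{d-1}(\overline C)=\Hc^{d-1}(\X,k)=0$ by the same affine vanishing applied over the field~$k$; likewise $\mathrm{H}^{d-1}(C)=\Hc^{d-1}(\X,\Lambda)=0$. Feeding this into the universal coefficient sequence of Theorem~\ref{1thm1} in degree $i=d-1$ — which holds verbatim over the discrete valuation ring $\Lambda$ with residue field $k$ in place of $\Lambda/\ell^n\Lambda$, by the very argument given there — both outer terms vanish, forcing $\mathrm{Tor}_1^\Lambda(\mathrm{H}^d(C),k)=0$. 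Since $\mathrm{Tor}_1^\Lambda(M,k)$ vanishes precisely when a finitely generated $\Lambda$-module $M$ is free, equivalently torsion-free, we conclude that $\Hc^d(\X,\Lambda)$ is torsion-free. This lets Lemma~\ref{1lem1}(b) remove $P_{d-1}$, and at the same time gives the final assertion of the statement.

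The only step that calls for attention is this last one: perfectness of $C$ by itself yields only the interval $d-1,\dots,2d$, and to reach $d,\dots,2d$ one genuinely needs the vanishing in degree $d-1$ with \emph{field} coefficients — not merely with $\Lambda$-coefficients — run through the universal coefficient exact sequence. Equivalently one could argue with $\Lambda/\ell^n\Lambda$-coefficients exactly as Theorem~\ref{1thm1} is phrased, using that $\Hc^{d-1}(\X,\Lambda/\ell^n\Lambda)=0$; in either case the input is Artin's vanishing theorem for affine varieties, valid for torsion coefficients as well as for $K$-, $\Lambda$- and $k$-coefficients, so there is no serious obstacle once the bookkeeping with the universal coefficient theorem is done.
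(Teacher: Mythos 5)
Your proof is correct and takes essentially the same route the paper indicates (the paper gives no explicit proof, but points to Lemma~\ref{1lem1}, Theorem~\ref{1thm1}, and Corollary~\ref{1cor1}, which is exactly the combination you use). The only point worth tightening is the phrasing ``both outer terms vanish, forcing $\mathrm{Tor}_1^\Lambda(\mathrm{H}^d(C),k)=0$'': the Tor term \emph{is} one of the outer terms of the short exact sequence, and what actually forces it to vanish is the vanishing of the middle term $\mathrm{H}^{d-1}(\overline C)=\Hc^{d-1}(\X,k)$ coming from Artin's affine vanishing with torsion coefficients; rephrase accordingly.
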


Note that the result holds for any disjoint union of irreducible affine varieties with the same dimension, and therefore for any Deligne-Lusztig variety that has been proven to be affine.

\mk

\noindent \thesubsection.2. \textbf{Generalized eigenspaces of the Frobenius.} We now study the case where  $\Upsilon = \langle F \rangle_{\mathrm{mon}}$ is generated by the Frobenius endomorphism attached to some rational $\mathbb{F}_q$-structure on $\X$. We would like to factor out the complex $\Rgc(\X,\mathcal{O})$ with respect to the eigenvalues of $F$.  For this purpose, we shall first review some basics about endomorphisms of finitely generated $\Lambda$-modules.

\sk

Let $M$ be a $\Lambda[T]$-module.  Denote by $f$ the endomorphism of $M$ induced by $T$. Assume that $M$ is finitely generated over $\Lambda$. Then there exists a monic polynomial $P \in \Lambda[T]$ such that $P(f) = 0$, and we are reduced to studying the action of the finite dimensional algebra $\Lambda[T]/(P)$ on $M$. We may assume without loss of generality that $P$ splits overs $\Lambda$. In that case, the factorization of $P= (T-\lambda_1)^{\alpha_1} \cdots (T-\lambda_n)^{\alpha_n}$ yields a decomposition of the module $KM$ with respect to the generalized eigenspaces of $f$:

\centers{$ KM \, = \, \mathrm{Ker}\, (f-\lambda_1)^{\alpha_1} \oplus \cdots \oplus \mathrm{Ker}\, (f-\lambda_n)^{\alpha_n}.$}

\noindent In order to obtain an modular analog of this decomposition we have to group together the eigenvalues according to their $\ell$-reduction (this becomes clear if we consider the module $\overline{M} = M \otimes_\Lambda k$). More precisely, if we define the polynomials 

 \centers{$ P_{\bar{\lambda}}(T) \, = \, \displaystyle \prod_{\bar{\lambda}_i = \bar{\lambda}} (T-\lambda_i)^{\alpha_i}$}

\noindent then the block decomposition of the algebra $\Lambda[T]/(P)$ is given by

\centers{$\Lambda[T]/(P) \simeq \displaystyle \prod_{\bar{\lambda} \in k} \Lambda[T] / (P_{\bar{\lambda}}). $}

\noindent For $\lambda \in K$, we define the \emph{generalized $(\lambda)$-eigenspace} of $f$ in $M$ to be $M_{(\lambda)} = e_{\lambda} M$ where $e_\lambda$ is the idempotent associated to the term $\Lambda[T]/(P_{\bar\lambda})$. By construction, $M$ decomposes into

\centers{$ M \, = \, \displaystyle \bigoplus_{\bar{\lambda} \in k} M_{(\lambda)}. $}

\begin{rmk}\label{1rmk2}This definition does not depend on $P$ since the module $e_\lambda M$ depends only on the image of $e_\lambda$ in the algebra $\Lambda[T]/\mathrm{Ann}(f)$. 
\end{rmk}

Equivalently, one could have defined the module $M_{(\lambda)}$ to be the kernel of the endomorphism $P_{\bar \lambda}(f)$. One can easily deduce the following result using this description:

\begin{lem}\label{1lem2}Let $f$ and $g$ be two endomorphisms of $M$. If $\overline{f} - \overline{g}$ is a nilpotent endomorphism of $\overline{M}$, then the generalized $(\lambda)$-eigenspaces of $f$ and $g$ on $M$ coincide. 
\end{lem}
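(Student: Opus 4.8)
The plan is to use the characterization of $M_{(\lambda)}$ as the kernel of $P_{\bar\lambda}(f)$, which was noted just before the statement, and reduce the claim to the observation that $P_{\bar\lambda}(f)$ and $P_{\bar\lambda}(g)$ differ by a nilpotent endomorphism modulo $\ell$. First I would fix a monic polynomial $P\in\Lambda[T]$ that annihilates \emph{both} $f$ and $g$ — one can take the product of two separate annihilating polynomials, or the characteristic polynomial of the action on a finite free cover — and split it over $\Lambda$ (after replacing $\Lambda$ by a finite extension if necessary, which by Remark \ref{1rmk2} does not affect the modules $M_{(\lambda)}$). This gives the polynomials $P_{\bar\lambda}$ and the idempotents $e_\lambda$ simultaneously for $f$ and $g$, and reduces everything to comparing the two $\Lambda[T]/(P)$-module structures on $M$.

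Next I would argue that it suffices to show $M_{(\lambda)}^f \subseteq M_{(\lambda)}^g$ for every $\bar\lambda\in k$, since both sides give direct sum decompositions of $M$ indexed by the same set $k$, so mutual inclusion of the summands forces equality. Fix $\bar\lambda$ and let $x\in M_{(\lambda)}^f$, i.e. $P_{\bar\lambda}(f)x=0$. I want to show $P_{\bar\mu}(g)x=0$ for all $\bar\mu\neq\bar\lambda$, which by the idempotent decomposition for $g$ places $x$ in $M_{(\lambda)}^g$. The key point is that modulo $\ell$ we have $\overline{P_{\bar\lambda}}(T)=(T-\bar\lambda)^{a_{\bar\lambda}}$ in $k[T]$ where $a_{\bar\lambda}=\sum_{\bar\lambda_i=\bar\lambda}\alpha_i$, and similarly for the others; so $\overline{P_{\bar\lambda}}(\bar f)x=0$ in $\overline M$ says $(\bar f-\bar\lambda)^{a_{\bar\lambda}}$ kills the image of $x$, and by hypothesis $\bar g-\bar f$ is nilpotent, say $(\bar g-\bar f)^N=0$, so $(\bar g-\bar\lambda)^{a_{\bar\lambda}+N}$ also kills $\bar x$ (binomially expand and use that $\bar g-\bar\lambda$ and $\bar g-\bar f$ commute). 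Hence $\overline{P_{\bar\mu}}(\bar g)\bar x$, being divisible in $k[T]$ by a power of $(T-\bar\mu)$ which is coprime to $(T-\bar\lambda)$, combined with a power of $(T-\bar\lambda)$ annihilating $\bar x$, gives $\overline{P_{\bar\mu}}(\bar g)\bar x=0$; more cleanly, $e_{\bar\lambda}^g\bar x=\bar x$ in $\overline M$ because the reduced idempotents of $g$ are obtained from the factorization $\overline{P}=\prod(T-\bar\nu)^{a_{\bar\nu}}$ and $\bar x$ lies in the generalized $\bar\lambda$-eigenspace of $\bar g$.

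To upgrade this from $\overline M$ back to $M$ I would use that $e_\lambda^g$ is an idempotent of $\Lambda[T]/(P)$ reducing to $e_{\bar\lambda}^g\in k[T]/(\bar P)$, so the element $(1-e_\lambda^g)x\in M$ reduces to $0$ in $\overline M$, i.e. lies in $\ell M$. Then a standard argument finishes it: $P_{\bar\lambda}(f)x=0$ means $x\in M_{(\lambda)}^f$, this is a $\Lambda$-direct summand of $M$, and on $M_{(\lambda)}^f$ the element $1-e_\lambda^g$ acts by something that is divisible by $\ell$ \emph{and} is an idempotent times a unit — more precisely $1-e_\lambda^g$ restricted to this summand is an idempotent endomorphism whose reduction mod $\ell$ is zero, and an idempotent that is $\equiv 0\pmod\ell$ must be $0$ (if $e^2=e$ and $e\in\ell\cdot\mathrm{End}$, write $e=\ell e'$, then $\ell e'=\ell^2 e'^2$, so $e'=\ell e'^2$, iterate to get $e'\in\bigcap_n\ell^n\mathrm{End}=0$ by $\ell$-adic separatedness of the finitely generated $\Lambda$-module $\mathrm{End}$). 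Hence $(1-e_\lambda^g)x=0$, so $x\in M_{(\lambda)}^g$, as desired. The main obstacle is purely bookkeeping: making sure the \emph{same} splitting polynomial $P$ is used for $f$ and $g$ so that the two families of idempotents live in the same algebra $\Lambda[T]/(P)$, and justifying the reduction to $\overline M$ via the separatedness of $M$ in the $\ell$-adic topology; once those are in place the nilpotence hypothesis does all the real work in one line.
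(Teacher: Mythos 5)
The paper provides no proof of this lemma (it only remarks that it follows easily from the kernel description of $M_{(\lambda)}$), so there is no official argument to compare against. Your overall strategy --- prove one inclusion $M_{(\lambda)}^{f}\subseteq M_{(\lambda)}^{g}$, descend to $\overline{M}$ where nilpotence of $\bar f - \bar g$ identifies the two generalized $\bar\lambda$-eigenspaces via Proposition~\ref{1prop1}(ii), then lift back using the idempotents $e_\lambda$ together with Krull-intersection separatedness of $\mathrm{End}_\Lambda(M)$ --- is the natural one, and your final observation (an idempotent that vanishes modulo the maximal ideal must be zero) is correct.

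There is, however, a genuine gap, and it surfaces in two places: your argument tacitly assumes that $f$ and $g$ commute. When you write ``binomially expand and use that $\bar g - \bar\lambda$ and $\bar g - \bar f$ commute,'' that commutation is precisely the relation $\bar f\bar g = \bar g\bar f$, which is not among the hypotheses. Later, the claim that ``$1-e_\lambda^g$ restricted to this summand is an idempotent endomorphism'' requires $e_\lambda^g$ --- a polynomial in $g$ --- to stabilize the $f$-summand $M_{(\lambda)}^f = e_\lambda^f M$, which again only follows from $fg=gf$. This is not a cosmetic omission: without commutativity the statement itself fails. Take $\ell\neq 2$, $M=\Lambda^2$, $f=\bigl(\begin{smallmatrix}0&0\\1&0\end{smallmatrix}\bigr)$, $g=\bigl(\begin{smallmatrix}0&1\\1&0\end{smallmatrix}\bigr)$: then $\bar f-\bar g$ is strictly upper triangular and hence nilpotent, yet $f$ is nilpotent so $M_{(0)}^f=M$, while $g^2=1$ has eigenvalues $\pm1$ and so $M_{(0)}^g=0$. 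In the paper's only use of the lemma (Section~\ref{4se2}) the two endomorphisms are $F^\delta$ and $v^{-1}=\alpha^{-1}\phi(F^\delta)$ with $\alpha\in\langle\tau\rangle$ a polynomial in $F^\delta$, so they do commute and the application is sound; but a complete proof --- and, arguably, the lemma statement itself --- must make the commutativity hypothesis explicit rather than treating it as free, which is what your write-up currently does.
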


The definition of $(\lambda)$-eigenpaces can be extended to the case where $\mathcal{O}$ is one of the field $K$ or $k$ by setting $(M \otimes_\Lambda \mathcal{O})_{(\lambda)} \, = \, M_{(\lambda)} \otimes_\Lambda \mathcal{O}$. The following proposition describes the relation between these modules and the usual generalized eigenspaces:

\begin{prop}\label{1prop1}Let $\lambda \in \Lambda$.

 \begin{itemize} 

\item[$\mathrm{(i)}$] The $K[T]$-module $(KM)_{(\lambda)} :=  M_{(\lambda)} \otimes_\Lambda K$ is isomorphic to the direct sum  of all the generalized $\mu$-eigenspaces where  $\mu$ runs over the set of eigenvalues congruent to $\lambda$ modulo $\ell$.

\item[$\mathrm{(ii)}$] The $k[T]$-module $\overline{M}_{(\lambda)} :=  M_{(\lambda)} \otimes_\Lambda k$ is isomorphic to the generalized $\bar{\lambda}$-eigen\-space corresponding to $\bar{\lambda} \in k$.

\end{itemize}

\end{prop}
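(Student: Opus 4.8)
The plan is to reduce both parts to a single computation with the block idempotent $e_\lambda \in \Lambda[T]/(P)$, and then to base-change that computation along $\Lambda \hookrightarrow K$ and along $\Lambda \twoheadrightarrow k$. Since $P(f)=0$, the module $M$ is naturally a module over $\Lambda[T]/(P)$, and the decomposition $\Lambda[T]/(P)\simeq\prod_{\bar\mu\in k}\Lambda[T]/(P_{\bar\mu})$ recalled above exhibits $e_\lambda$ as a central idempotent of $\Lambda[T]/(P)$. In particular $M=e_\lambda M\oplus(1-e_\lambda)M$ as $\Lambda$-modules, so for $\mathcal{O}\in\{K,k\}$ the image of $e_\lambda$ acting on $M\otimes_\Lambda\mathcal{O}$ is exactly $M_{(\lambda)}\otimes_\Lambda\mathcal{O}$. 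Thus everything comes down to identifying the image of $e_\lambda$ after scalar extension.

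For (i), I would base-change the block decomposition along $\Lambda\hookrightarrow K$ and then refine it via the Chinese Remainder Theorem over $K$. Writing $P=\prod_{i=1}^n(T-\lambda_i)^{\alpha_i}$ with pairwise distinct $\lambda_i\in\Lambda$, the polynomials $T-\lambda_i$ for the indices $i$ with $\bar\lambda_i=\bar\lambda$ are pairwise coprime in $K[T]$, so $K[T]/(P_{\bar\lambda})\simeq\prod_{\bar\lambda_i=\bar\lambda}K[T]/\big((T-\lambda_i)^{\alpha_i}\big)$. Hence the image of $e_\lambda$ in $K[T]/(P)$ is the sum of the primitive idempotents attached to those factors, and therefore $e_\lambda(KM)=\bigoplus_{\bar\lambda_i=\bar\lambda}\mathrm{Ker}\,(f-\lambda_i)^{\alpha_i}$. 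Since the summand $\mathrm{Ker}\,(f-\lambda_i)^{\alpha_i}$ of $KM$ is precisely the generalized $\lambda_i$-eigenspace of $f$ (and the generalized $\mu$-eigenspace vanishes for every $\mu\notin\{\lambda_1,\dots,\lambda_n\}$), and since for $\mu,\lambda\in\Lambda$ one has $\mu\equiv\lambda\pmod\ell$ if and only if $\bar\mu=\bar\lambda$ in $k$, this is exactly the asserted direct sum of generalized eigenspaces.

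For (ii), I would instead reduce the block decomposition modulo $\ell$. Each factor reduces to $\overline{P_{\bar\mu}}=(T-\bar\mu)^{\beta_{\bar\mu}}$ with $\beta_{\bar\mu}=\sum_{\bar\lambda_i=\bar\mu}\alpha_i$, and for distinct $\bar\mu$ these polynomials are pairwise coprime in $k[T]$; hence $k[T]/(\overline P)\simeq\prod_{\bar\mu\in k}k[T]/\big((T-\bar\mu)^{\beta_{\bar\mu}}\big)$ is the primary decomposition of $k[T]/(\overline P)$, and the image $\overline{e_\lambda}$ of $e_\lambda$ in $k[T]/(\overline P)$ is the primitive idempotent attached to the $(T-\bar\lambda)$-primary component. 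Consequently $\overline M_{(\lambda)}=\overline{e_\lambda}\,\overline M=\mathrm{Ker}\,(\bar f-\bar\lambda)^{\beta_{\bar\lambda}}$, which is the generalized $\bar\lambda$-eigenspace of $\bar f$ on $\overline M$, as claimed.

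The only point requiring genuine care — and the one I would nail down first — is the compatibility of the block decomposition of $\Lambda[T]/(P)$ with scalar extension, i.e. that the splitting is already defined over $\Lambda$ and is preserved by $-\otimes_\Lambda K$ and $-\otimes_\Lambda k$. This rests on the elementary observation that whenever $\bar\lambda_i\neq\bar\lambda_j$ the difference $\lambda_i-\lambda_j$ lies outside the maximal ideal of $\Lambda$, hence is a unit, so that $(T-\lambda_i)$ and $(T-\lambda_j)$ already generate the unit ideal of $\Lambda[T]$; the same then holds for their powers and for products over disjoint index sets, which is exactly what the Chinese Remainder Theorem needs over $\Lambda$. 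Once this is in place, the rest is routine bookkeeping with idempotents, together with the standard fact that $P(f)=0$ forces $KM=\bigoplus_i\mathrm{Ker}\,(f-\lambda_i)^{\alpha_i}$ (and the analogue over $k$).
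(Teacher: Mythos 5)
Your argument is correct. The paper leaves this proposition as an easy deduction (it explicitly offers the equivalent description $M_{(\lambda)}=\mathrm{Ker}\,P_{\bar\lambda}(f)$ and says the result follows "easily"), and your proof — base-changing the block decomposition of $\Lambda[T]/(P)$ along $\Lambda\hookrightarrow K$ and $\Lambda\twoheadrightarrow k$, refining via CRT, and tracking the image of $e_\lambda$ — is exactly the intended routine computation; using the idempotent $e_\lambda$ directly is interchangeable with using $\mathrm{Ker}\,P_{\bar\lambda}(f)$, and your remark that $\lambda_i-\lambda_j\in\Lambda^\times$ whenever $\bar\lambda_i\neq\bar\lambda_j$ is precisely the point that makes the $\Lambda$-integral splitting (and hence its compatibility with both scalar extensions) go through.
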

 
More generally, if $\mathcal{O}$ is any ring among $(K,\Lambda,k)$, one can define an endofunctor $C \longmapsto C_{(\lambda)}$ of the category of bounded complexes of $\Lambda[T]$-modules finitely generated over $\Lambda$. It is an exact functor, and as such it satisfies

\centers{$ \mathrm{H}^\bullet(C_{(\lambda)}) \, \simeq \,  \mathrm{H}^\bullet(C)_{(\lambda)}$.}

Now, in order to apply this construction to the cohomology complex $\Rgc(\X,\mathcal{O})$ we need some finiteness conditions. These are given by Theorem \ref{1thm2}: there exists a bounded complex $\mathscr{C}$ of finitely generated $\mathcal{O} H$-modules, together with $H$-equivariant morphisms $f : C \longrightarrow \mathscr{C}$ and $g : \mathscr{C} \longrightarrow C$ which are mutually inverse in the category $K^b(\mathcal{O}H$-$\mathrm{Mod})$. Assume that the Frobenius $F$ commutes with the action of $H$ so that we can define a $H$-equivariant morphism on $\mathscr{C}$ by setting $\mathscr{F} = f \circ F \circ g$. The definition of $\mathscr{F}$ depends on the choice of the homotopy equivalence, but the images of $\mathscr{F}$ and $F$ coincide on the cohomology of $\X$. In particular, there exists an isomorphism of $\mathcal{O} H$-modules

\centers{$ \mathrm{H}^\bullet(\mathscr{C}_{(\lambda)}) \, \simeq \, \Hc^\bullet(\X,\mathcal{O})_{(\lambda)}$}

\noindent where the eigenspace on the right side is taken with respect to $F$. 

\sk

Moreover, if the terms of $\mathscr{C}$ are projective modules (for example if the action of $H$ is free) then the generalized $(\lambda)$-eigenspaces $\mathscr{C}_{(\lambda)}$ are in turn objects of the category $C^b(\mathcal{O}H$-$\mathrm{proj})$ and have, besides, the advantage of being in general much smaller than $\mathscr{C}$ itself.

\subsection{Finite reductive groups\label{1se3}}

\noindent \thesubsection.1. \textbf{Algebraic groups.} We keep the basic assumptions of the introduction, with some slight modification: $\G$ is a connected reductive algebraic group, together with an isogeny $F$, some power of which is a Frobenius endomorphism. In other words, there exists a positive integer $\delta$ such that $F^\delta$ defines a split $\mathbb{F}_{q^\delta}$-structure on $\G$ for a certain power $q^\delta$ of the characteristic $p$ (note that $q$ might not be an integer). We will assume that $\delta$ minimal for this property. For all  $F$-stable algebraic subgroup $\mathbf{H}$ of $\G$, we will denote by $H$ the finite group of fixed points $\mathbf{H}^F$.  \sk

We fix a Borel subgroup $\B$ containing a maximal torus $\T$ of $\G$ such that both $\B$ and $\T$ are $F$-stable. They define a root sytem $\Phi$ with basis $\Delta$, and a set of positive (resp. negative) roots $\Phi^+$ (resp. $\Phi^-$). Note that the corresponding Weyl group $W$ is endowed with a action of $F$, compatible with the isomorphism $W \simeq N_\G(\T)/\T$. Therefore, the image by $F$ of a root is a positive multiple of some other root, which will be denoted by $\phi^{-1}(\alpha)$, defining thus a bijection $\phi : \Phi \longrightarrow \Phi$. Since $\B$ is  $F$-stable, this map preserves $\Delta$ and $\Phi^+$. We will  use the notation $[\Delta/\phi]$ for a set of representatives of the orbits of $\phi$ on $\Delta$.

\mk

\noindent \thesubsection.2. \textbf{Deligne-Lusztig varieties.} Following \cite[Section 11.2]{BR1}, we fix a set of representatives $\{\dot w\}$ of $W$ in $N_\G(\T)$ and we define, for $w \in W$, the Deligne-Lusztig varieties $\X(w)$ and $\Y(\dot w)$ by:

\sk

\centers{$ \begin{psmatrix}[colsep=2mm,rowsep=10mm] \Y(\dot w) & = \, \big\{ g\U \in \G / \U \ \big| \ g^{-1}F(g) \in \U \dot w \U \big\} \\
					\X(w) & = \, \big\{ g\B \in \G / \B \ \big| \ g^{-1}F(g) \in \B w \B \big\} 
\psset{arrows=->>,nodesep=3pt} 
\everypsbox{\scriptstyle} 
\ncline{1,1}{2,1}<{\pi_w}>{/ \, \T^{wF}}		
\end{psmatrix}$}

\sk

\noindent where $\pi_w$ denotes the restriction to $\Y(\dot w)$ of the canonical projection $\G/\U \longrightarrow \G/\B$. They are both quasi-projective varieties endowed with a left action of $G$ by left multiplication. Furthermore, $\T^{wF}$ acts on the right of $\Y(\dot w)$ and $\pi_w$ is isomorphic to the corresponding quotient map, so that it induces a $G$-equivariant isomorphism of varieties $\Y(\dot w) / \T^{wF} \simeq \X(w)$. 

\sk

The $\ell$-adic cohomology of theses varieties yields the so-called Deligne-Lusztig induction.  More precisely, if $\theta$ is a character of $\T^{wF}$, one can look at the $\theta$-isotypic component of the cohomology and define the following virtual character

\centers{$\mathrm{R}_{w}(\theta) \, = \, \displaystyle \sum_{i\in \mathbb{Z}} (-1)^i\Hc^i(\Y(\dot w), K)_{\theta}.$}

\noindent Note that with our definition of the variety $\Y(\dot w)$ we have chosen to work with characters of $\T^{wF}$ instead of characters of $T_w$ for some torus $\T_w$ of type $w$. Our aim is to understand a far-reaching generalization of this character in the case where $w$ is a Coxeter element. It will be represented by a well-identified direct summand of the complex $\Rgc(\Y(\dot w),\Lambda)$.  

\section{The principal \texorpdfstring{$\ell$}{l}-block in the Coxeter case}

 In this preliminary section we introduce the main object of our study: the principal $\ell$-block $b$ of
 $G$ where the order of $q$ modulo $\ell$ is assumed to be the Coxeter number $h$. We will refer to this case as the \emph{Coxeter case}. This is in some sense the maximal interesting case in the modular representation theory of $G$, since $h$ is also the largest integer $d$ such that the cyclotomic polynomial $\Phi_d(q)$ divides the order of $G$.

\sk 

The results in \cite{BMM} express the irreducible characters of this block in terms of
irreducible components of Deligne-Lusztig characters $\mathrm{R}_{c}(\theta)$ induced from a Coxeter torus $\T^{cF}$. In this particular case, an explicit decomposition of these virtual characters is given by Lusztig's work on the cohomology of the Deligne-Lusztig variety $\X(c)$ \cite{Lu}. The characters of the block fall into two families:

\begin{itemize} 

\item[$\bullet$] the characters $\mathrm{R}_{c}(\theta)$ for $\theta$ a non-trivial $\ell$-character of the torus. These are irreducible characters (up to a sign);

\item[$\bullet$] the unipotent characters, coming from the cohomology of $\X(c)$.

\end{itemize}

\noindent Here the defect group of the principal $\ell$-block turns out to be a cyclic group and the distinction "non-unipotent/unipotent" translates into "exceptional/non-exceptional" in the theory of blocks with cyclic defect groups. The connection is actually much deeper: Hiss, L\"ubeck and Malle have observed in \cite{HLM} that the cohomology of the Deligne-Lusztig variety $\X(c)$ should not only give the characters of the block, but  also its Brauer tree. 

\sk 

We shall first review the geometric objects and the fundamental results involved in their description before recalling their conjecture.

\subsection{The Coxeter case\label{2se1}}

For the sake of simplicity, we first assume that $\G$ has no twisted components of type ${}^2$B$_2$, ${}^2$F$_4$ or ${}^2$G$_2$. The case of the Ree and Suzuki groups will be treated independently.

\mk 

\noindent \thesubsection.1. \textbf{Coxeter elements.} Let $V = X^\vee(\T) \otimes_\mathbb{Z} \mathbb{C}$ be
the $m$-dimensional vector space generated by the cocharacters of $\T$. The Weyl group $W$ can be
seen as a subgroup of the linear automorphisms of this vector space; moreover, the linear map
$\sigma = q^{-1} F$  has finite order $\delta$ and normalizes $W$ (with the previous assumptions on
$(\G,F)$, this is exactly the linear continuation of $\phi^\vee$). By \cite{St}, there exist eigenvectors
$(f_1,\ldots,f_m)$ of $\sigma$ in $S(V)$ of degrees $(d_1,\ldots,d_n)$ with associated eigenvalues
$(\varepsilon_1,\ldots,\varepsilon_m)$ such that $S(V)^W$ is isomorphic to the polynomial algebra
$\mathbb{C}[f_1,\ldots,f_m]$. Up to permutation, the pairs $(d_j,\varepsilon_j)$ are uniquely
determined by $\sigma$. The order of $G$ is then given by the following formula

\centers{$ |G| \, = \, \displaystyle q^N \prod_{j=1}^m (q^{d_j}-\varepsilon_j^{-1}) \, = \, q^N \, \prod_{d} \Phi_d(q)^{a(d)}$}

\noindent where $a(d)$ is the number of $j$ such that $\varepsilon_j = \exp(2 \mathrm{i} \pi d_j/d)$ \cite{BMa1}. The largest integer such that $a(d)$ is non-zero will be denoted by $h$ and referred as the \emph{Coxeter number} of the pair $(W,F)$.

\sk

From now on, we assume that $\G$ is semi-simple and $W$ is irreducible. In this case, the
$\mathbb{C}$-vector space $V = X^\vee(\T) \otimes_\mathbb{Z} \mathbb{C}$ can be identified with
the reflection representation of $W$ and the pairs $(d_j,\varepsilon_j)$ have been explicitely
computed in \cite{Car2}. From these values one easily deduces the Coxeter numbers for each type

\centers[3]{\begin{small} $ \begin{array}{c|c|c|c|c|c|c|c|c|c|c|c|c|c} \text{type} & \text{A}_n  & \text{B}_n & \text{D}_n & \text{E}_6 &  \text{E}_7 & \text{E}_8   & \text{F}_4 & \text{G}_2 & {}^2\text{A}_{2n} & {}^2\text{A}_{2n+1} & {}^2\text{D}_n & {}^3\text{D}_4 &  {}^2\text{E}_6 \\[3pt] \hline h \vphantom{\mathop{A}\limits^p}& n+1 & 2n & 2n-2 & 12 & 18 & 30 & 12 & 6  & 4n +2 & 4n +2 & 2n & 12 & 18\\ \end{array}$ \end{small}}      

\noindent and one can check that $a(h)$ is always equal to $1$.

\sk 

The twisted counterpart of the usual notion of Coxeter elements for Weyl groups has been introduced in \cite[Section 7]{Sp}:

\begin{de}\label{2de1}A \emph{Coxeter element} of the pair $(W,F)$ is a product $c= s_{\beta_1} \cdots s_{\beta_r}$ where $\{\beta_1,\ldots,\beta_r\} = [\Delta/\phi]$ is any set of representatives of the orbits of the simple roots under the action of $\phi$.
\end{de}

\noindent Such an element $c$ has the same properties as usual Coxeter elements, provided that the conjugation under $W$ is replaced by the $F$-conjugation. These properties become clear if we consider $c\sigma = q^{-1} cF \in \mathrm{GL}(V)$ instead of $c$:

\begin{thm}[Springer]\label{2thm1}Let $c$ be a Coxeter element of $(W,F)$ with $W$ irreducible.

\begin{itemize} 

 \item[$\mathrm{(i)}$] \vskip-4pt $c\sigma$ is an $h$-regular element.

 \item[$\mathrm{(ii)}$] Let $c'$ be any element of $W$. If  $c'\sigma$ has an eigenvalue of order $h$, it is $h$-regular and conjugated to $c\sigma$.  In particular, the set of Coxeter elements is contained in a single $F$-conjugacy class.

 \item[$\mathrm{(iii)}$] The eigenvalues of $c\sigma$ are $\varepsilon_j^{-1} \exp(2 \mathrm{i} \pi (d_j-1) /h)$. Moreover, the eigenvalues of order $h$ occur with multiplicity $1$. 

 \item[$\mathrm{(iv)}$] The centralizer $C_W(c\sigma)$ is a cyclic group generated by $(c\sigma)^\delta = c F(c) \cdots F^{\delta-1}(c)$.

\end{itemize}
\end{thm}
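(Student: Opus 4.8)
The plan is to obtain all four parts as the specialisation, to the integer $d=h$, of Springer's theory of regular elements of the coset $W\sigma$ inside $\mathrm{GL}(V)$ \cite{Sp}; the only input from outside that theory is the numerical fact, visible in the table above (or in \cite{BMa1}), that $a(h)=1$. The facts from \cite{Sp} I would invoke are, for a primitive $d$-th root of unity $\zeta$ and any $w\in W$, writing $V(w\sigma,\zeta)$ for the $\zeta$-eigenspace of $w\sigma$ in $V$: (a) $\dim V(w\sigma,\zeta)\le a(d)$; (b) $w\sigma$ has a $\zeta$-eigenvector lying in no reflection hyperplane precisely when this bound is attained, in which case $w\sigma$ is called $\zeta$-regular; (c) for a $\zeta$-regular $w\sigma$, the eigenvalues of $w\sigma$ on $V$ are exactly the $\varepsilon_j^{-1}\zeta^{d_j-1}$, the centraliser $C_W(w\sigma)$ acts faithfully on $V(w\sigma,\zeta)$, and any two $\zeta$-regular elements of $W\sigma$ are $W$-conjugate.

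For (i): by Definition \ref{2de1}, $c\sigma$ is the $\sigma$-twisted analogue of a Coxeter element (one simple reflection per $\phi$-orbit on $\Delta$), and \cite[Section 7]{Sp} identifies it as a regular element of $W\sigma$ for the eigenvalue $\exp(2\mathrm{i}\pi/h)$ — in the untwisted case this is the classical statement that a Coxeter element has order $h$ together with a regular eigenvector in the Coxeter plane. For (ii): if $c'\sigma$ has an eigenvalue $\mu$ of order $h$, then $1\le\dim V(c'\sigma,\mu)\le a(h)=1$ by (a), so the bound is attained and $c'\sigma$ is $\mu$-regular, hence $h$-regular; that it is then $W$-conjugate to $c\sigma$ follows from (c) once one knows that all primitive $h$-th roots of unity produce $W$-conjugate regular elements, equivalently that multiplication by a unit modulo $h$ permutes the multiset of numbers $d_j-1$. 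Applying this to $c'=c$, for every choice of representatives $[\Delta/\phi]$, shows that the Coxeter elements form a single $F$-conjugacy class.

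For (iii): the eigenvalue formula is the twisted form of (c); and if $\mu=\varepsilon_j^{-1}\zeta_h^{d_j-1}$ has order exactly $h$ then, again by (a), $1\le\dim V(c\sigma,\mu)\le a(h)=1$, so $\mu$ occurs with multiplicity $1$. For (iv): since $a(h)=1$ the eigenspace $L=V(c\sigma,\zeta_h)$ is a line through a regular vector, $C_W(c\sigma)$ preserves $L$ and acts faithfully there (a non-trivial element of $W$ cannot fix a regular vector), so $C_W(c\sigma)\hookrightarrow\mathrm{GL}(L)\cong\mathbb{C}^\times$ is cyclic; that $(c\sigma)^\delta$ generates it follows from Springer's description of the centraliser of a regular element, together with the identity $(c\sigma)^\delta=c\,F(c)\cdots F^{\delta-1}(c)$, which is immediate from $\sigma^\delta=1$ and $\sigma w\sigma^{-1}=F(w)$ for $w\in W$, noting that $\delta\mid h$ in every type under consideration, so this element has order $h/\delta=|C_W(c\sigma)|$.

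The argument is thus an assembly of Springer's theory with the elementary observation $a(h)=1$, and the one place where it is not purely formal is the conjugacy claim in (ii): Springer's conjugacy statement concerns a \emph{fixed} root of unity $\zeta$, whereas here one needs it simultaneously for all primitive $h$-th roots, which rests on a small structural fact about the numbers $d_j-1$ (or on a case-by-case inspection). Apart from that, the main thing to be careful about is transporting Springer's machinery — written for honest reflection groups — correctly through the $\sigma$-twist, which is precisely what \cite[Section 7]{Sp} supplies.
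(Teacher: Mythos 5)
The paper does not prove this theorem: it attributes it to Springer and refers to \cite[Section 7]{Sp} (and to \cite{BMa1} for the framework), so there is no in-paper proof for your argument to be compared against. Your proposal is a sensible reconstruction of how Springer's theory of regular elements yields the four assertions, and the role you assign to the numerical fact $a(h)=1$ is exactly right; this matches the paper's implicit position, which is simply to cite Springer.

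One place where the sketch is thinner than it should be is the conjugacy part of (ii). You correctly observe that Springer's conjugacy theorem is for a \emph{fixed} primitive $h$-th root $\zeta$, and you propose to bridge the gap between different primitive roots by the invariance of the multiset $\{d_j-1 \bmod h\}$ under multiplication by units. That argument is the standard one in the \emph{untwisted} case, where rationality of the characteristic polynomial of a Weyl group element forces the eigenvalues to come in full Galois orbits. In the twisted setting, however, the eigenvalues of $c\sigma$ are $\varepsilon_j^{-1}\exp(2\mathrm{i}\pi(d_j-1)/h)$ with the $\varepsilon_j$ nontrivial roots of unity of order dividing $\delta$, and the naive rationality argument over $\mathbb{Q}$ no longer applies directly; moreover, if $\zeta'=\zeta^k$ then $(c\sigma)^k$ is $\zeta'$-regular but lives in the coset $W\sigma^k$, not in $W\sigma$, so you cannot quote the fixed-$\zeta$ conjugacy theorem to compare $(c\sigma)^k$ with an arbitrary $\zeta'$-regular $c'\sigma\in W\sigma$. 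This is precisely what Springer's Section 7 is designed to circumvent — he characterizes twisted Coxeter elements and proves directly that they form a single $W$-conjugacy class inside the coset $W\sigma$ — so if you are going to cite \cite{Sp} anyway, it is cleaner (and actually necessary in the twisted case) to take (ii) from Section 7 rather than to reassemble it from the fixed-$\zeta$ machinery plus an ad hoc statement about the exponents. The rest of the outline (dimension bound, multiplicity one, the faithful action of $C_W(c\sigma)$ on the regular line, the identity $(c\sigma)^\delta=cF(c)\cdots F^{\delta-1}(c)$ via $\sigma^\delta=1$) is correct and in line with the intended reference.
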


As a byproduct $\delta$ divides $h$. The quotient will be denoted by $h_0 = h/\delta$ in line with Lusztig's definition \cite[Section 1.13]{Lu}. For the sake of completeness, we give the different values of this number:

\centers[3]{\begin{small} $ \begin{array}{c|c|c|c|c|c|c|c|c|c|c|c|c|c} \text{type} & \text{A}_n  & \text{B}_n & \text{D}_n & \text{E}_6 &  \text{E}_7 & \text{E}_8   & \text{F}_4 & \text{G}_2 & {}^2\text{A}_{2n} & {}^2\text{A}_{2n+1} & {}^2\text{D}_n & {}^3\text{D}_4 &  {}^2\text{E}_6 \\[3pt] \hline h_0 \vphantom{\mathop{A}\limits^p}& n+1 & 2n & 2n-2 & 12 & 18 & 30 & 12 & 6  & 2n +1 & 2n +1 & n & 4 & 9\\ \end{array}$\end{small}}

\begin{rmk}\label{2rmk1}One could have defined the Coxeter number $h$ to be the maximal order of the eigenvalues of the elements of $W\sigma$. This is actually the original definition given by Springer \cite{Sp}, but it coincides with the previous one by the generic Sylow theorems \cite{BMa1}.
\end{rmk}

\mk

\noindent \thesubsection.2. \textbf{Coxeter tori.} Let $c$ be a Coxeter element of $(W,F)$. We will be interested
in rationnal tori $\T_c$ of type $c$, which are usually  called \emph{Coxeter tori}. Recall that
$(\T_c,F)$ is isomorphic to $(\T, cF)$ and that the order of the associated finite groups is given
by 

\centers{$ |T_c| \, = \, | \T^{cF}| \, = \, \det(qc\sigma - 1 \, | \, X^\vee(\T) \otimes_\mathbb{Z}
\mathbb{C}).$}

\noindent Since $a(h)=1$, the torus $\T_c$ contains a unique $\Phi_h$-Sylow subgroup $\mathbf{S}_h$
of $\G$, as defined in \cite{BMa1}. The following proposition summarizes the different properties we will use later on. They
are easily obtained by rephrasing Theorem \ref{2thm1} in the framework of \cite{BMa1} (see \cite{Du2} for more
details).

\begin{prop}\label{2prop1}Let $\ell$ be a prime number different from $p$. We assume that $\ell$ divides
$\Phi_h(q)$ but does not divide $|W^F|$. Then

\begin{itemize}

 \item[$\mathrm{(i)}$] The set $T_\ell$ of $\ell$-elements in $S_h$ is a cyclic $\ell$-Sylow
subgroup of $G$.

 \item[$\mathrm{(ii)}$] $C_\G(T_\ell) = \T_c$ and $N_G(T_\ell)/C_G(T_\ell) \simeq (N_\G (\T_c) /\T_c)^F \simeq
C_W(c\sigma)$.
 
 \item[$\mathrm{(iii)}$] Any non-trivial $\ell$-character $\theta$ of $T_c$ (or $\T^{cF}$) is in general position.
In other words, the centralizer $C_{C_W(c\sigma)}(\theta)$ is trivial. 

\end{itemize}
\end{prop}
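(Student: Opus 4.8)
The plan is to translate each of the three assertions into a statement of Brou\'e--Malle's generic Sylow theory \cite{BMa1}, feeding in Theorem~\ref{2thm1} as the source of all the structural information about $c\sigma$. The arithmetic set-up is this: since we are in the Coxeter case, the order of $q$ modulo $\ell$ equals $h$, so $\ell\nmid h$ (hence $\ell\nmid h_0$ too) and $h\mid\ell-1$, and $\ell$ divides $\Phi_d(q)$ for no $d\neq h$ with $a(d)\neq 0$ (such a $d$ would be of the form $h\ell^k>h$, forcing $a(d)=0$). Together with the hypothesis $\ell\nmid|W^F|$, this is exactly the regime in which \cite{BMa1} provides a tight comparison between $\G$, its $\Phi_h$-torus, and the associated finite $\ell$-groups.

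For (i): by Theorem~\ref{2thm1}(iii) and $a(h)=1$, the subgroup $\mathbf{S}_h\subseteq\T_c$ is a $\Phi_h$-Sylow with $a(h)=1$, so $S_h=\mathbf{S}_h^F$ is cyclic of order $\Phi_h(q)$ (a $\Phi_d$-Sylow with $a(d)=1$ has cyclic group of $\mathbb{F}_q$-points of order $\Phi_d(q)$), and hence so is its $\ell$-part $T_\ell$. From $|G|=q^N\prod_d\Phi_d(q)^{a(d)}$, the fact that $\ell\neq p$, and the arithmetic remark above, the $\ell$-part of $|G|$ is the $\ell$-part of $\Phi_h(q)$, i.e.\ $|T_\ell|$; so $T_\ell$ is a Sylow $\ell$-subgroup of $G$.

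For (ii): the inclusion $\T_c\subseteq C_\G(T_\ell)$ is clear since $\T_c$ is abelian, and this is where most of the work sits. One invokes \cite{BMa1} to the effect that, for $\ell$ in this regime, $T_\ell$ is large enough inside $\mathbf{S}_h$ that $C_\G(T_\ell)=C_\G(\mathbf{S}_h)$ and $N_\G(T_\ell)=N_\G(\mathbf{S}_h)$, and that $C_\G(\mathbf{S}_h)=\T_c$ because $\mathbf{S}_h$ is a $\Phi_h$-Sylow with $a(h)=1$ (again Theorem~\ref{2thm1}(i)--(iii) read through \cite{BMa1}). As $T_\ell$ is then characteristic in $\T_c$ (it is the $\ell$-part of the unique $\Phi_h$-Sylow of $\T_c$), we get $N_G(T_\ell)=N_G(\T_c)$ and $C_G(T_\ell)=\T^{cF}$; finally $\T_c$ is connected, so Lang's theorem gives $N_G(\T_c)/\T^{cF}\simeq(N_\G(\T_c)/\T_c)^F$, which equals $C_W(c\sigma)$ since $\T_c$ has type $c$.

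For (iii): by (ii) the group $C_W(c\sigma)\simeq N_G(T_\ell)/C_G(T_\ell)$ acts \emph{faithfully} on the cyclic $\ell$-group $T_\ell$, and its order $h_0$ is prime to $\ell$. A prime-to-$\ell$ group of automorphisms of a cyclic $\ell$-group $T$ acts freely on $T\setminus\{1\}$, hence dually on the non-trivial characters of $T$: it is contained in the prime-to-$\ell$ part of $\mathrm{Aut}(T)=(\mathbb{Z}/\ell^n\mathbb{Z})^\times$, each non-identity element of which is $\not\equiv 1$ modulo $\ell$ and so acts by multiplication by a unit fixing only $0$. Since a non-trivial $\ell$-character $\theta$ of $T_c$ factors through $T_\ell$, this gives $C_{C_W(c\sigma)}(\theta)=1$. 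The one substantive point in the whole argument is thus the genericity dictionary underlying (ii): that when $\ell\mid\Phi_h(q)$ and $\ell\nmid|W^F|$, the torus $\mathbf{S}_h$ may be replaced by its finite $\ell$-subgroup $T_\ell$ without losing information on centralisers, normalisers and the relative Weyl group action. Granting this — which is precisely the Sylow $\Phi_d$-theory of \cite{BMa1} — assertions (i)--(iii) become direct translations of the four parts of Theorem~\ref{2thm1}.
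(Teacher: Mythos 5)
Your proposal is correct and follows the very route the paper indicates — the paper gives no detailed argument, only the remark that the assertions ``are easily obtained by rephrasing Theorem~\ref{2thm1} in the framework of \cite{BMa1}'', deferring to \cite{Du2} for details, and your write-up is a faithful expansion of that plan. A minor point worth tightening: in your arithmetic preamble, deducing ``the order of $q$ mod $\ell$ equals $h$'' itself uses that $\ell\nmid h$, so you should not then re-derive $\ell\nmid h$ from that order statement; the clean route (which the paper sketches later in Section~\ref{2se2}) is to first rule out $\ell\mid h$ using $\ell\nmid|W^F|$ and then conclude $q$ is a primitive $h$-th root of unity mod $\ell$.
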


\noindent \thesubsection.3. \textbf{The case of Ree and Suzuki groups.} The previous proposition holds also
when $\G$ has type ${}^2$B$_2$, ${}^2$F$_4$ or ${}^2$G$_2$. The notion of Coxeter elements has
indeed a natural generalization to these groups, taking into account that $\sigma = q^{-1} F$ does
no longer stabilize $X^\vee(\T)$ but only $X^\vee(\T) \otimes_\mathbb{Z} \mathbb{Z}[{\sqrt p}^{-1}]$
for $p=2$ or $3$ depending on the type of $(\G,F)$. The previous table can then be completed
with the orders of $c\sigma$:

\centers[1]{\begin{small}$  \begin{array}{c|c|c|c} \text{type} & {}^2\text{B}_2  &
{}^2\text{F}_4 & {}^2\text{G}_2
\\[3pt] \hline h \vphantom{\mathop{A}\limits^p} & 8 & 24 & 12 \\ \end{array}$\end{small}}   

\noindent Over the ring $\mathbb{Z}[\sqrt{p}]$, the cyclotomic polynomial $\Phi_h$ is no longer irreducible. For each type, the finite group $T_c$ itself is a cyclic group, and its order is given by the evalutation at $q$ of an irreducible factor of $\Phi_h$. When $q$ is positive, it is given by

\centers{\begin{small} $\begin{array}{c|c|c|c} \text{type} & {}^2\text{B}_2  &
{}^2\text{F}_4 & {}^2\text{G}_2
\\[3pt]\hline 
|T_c| \vphantom{\mathop{A}\limits^p} & 1-q\sqrt2 +q^2 & 1-q\sqrt2 + q^2 - q^3 \sqrt2
+q^4 & 1-q\sqrt3 + q^2 \\ \end{array}$\end{small}}  

\noindent Using  a case-by-case analysis, one can also check that when $\ell$ divides one of these
numbers without dividing the order of the corresponding Weyl group, the set of all $\ell$-elements
in $T_c$ is again a Sylow $\ell$-subgroup and it satisfies the assertions $\mathrm{(ii)}$ and
$\mathrm{(iii)}$ in Proposition \ref{2prop1}.

\subsection{Characters in the principal block\label{2se2}}

In order to use the results stated in the previous section, we will, until further notice, assume $\G$ to be  a
semi-simple group and $W$ to be irreducible (we shall say that $\G$ is \emph{quasi-simple}). We fix
a prime number $\ell$ not dividing the order of $W^F$ and satisfying one of the two following
assumptions, depending on the type of $(\G,F)$: 

\begin{itemize}
 
 \item[$\bullet$] "non-twisted" cases: $\ell$ divides $\Phi_h(q)$; 

 \item[$\bullet$] "twisted" cases: $\ell$ divides the order of $T_c$ for some Coxeter element $c$.

\end{itemize}

\noindent As in Section \ref{1se1}, the modular framework will be given by  an $\ell$-modular system $(K,\Lambda,k)$, which we require to be big enough for $G$. Note that the conditions on the prime number $\ell$ ensure that the class of $q$ in $k^\times$ is a primitive $h$-th root of unity. Indeed, $q^h$ is congruent to $1$ modulo $\ell$ and  for any proper divisor $m$ of $h$ the class of $q^m$ cannot be $1$ otherwise $1+q+\dots + q^{h-1}$ would be congruent to both $0$ and $h/m(1+ q + \cdots + q^{m-1})$. Taking $m$ to be minimal would force $\ell$ to divide $h/m$. The principal block of $\Lambda G$ for this particular class of primes will be at the center of our study.

\sk

We choose a Coxeter element $c$  of $(W,F)$ together with a maximal rational torus $\T_c$ of type $c$ (a Coxeter torus). This torus is the centralizer of a so-called $\Phi_h$-torus $\mathbf{S}_h$ (with $\mathbf{S}_h = \T_c$ for the Ree and Suzuki groups). As such, in the terminology of \cite{BMM}, it is a $h$-split Levi subgroup, and the $h$-cuspidal pair $(\T_c,1)$ corresponds to the principal $\ell$-block $b$ of $G$. More precisely, it follows from \cite[Theorem 5.24]{BMM} that the characters in $b$ are exactly the irreductible components of the Deligne-Lusztig characters $\mathrm{R}_{c}(\theta)$ where $\theta$ runs over the set of $\ell$-charaters of $\T^{cF}$. Two families of characters occur in this description; using Lusztig's results on the variety $\X(c)$, we now proceed with their parametrization.

\mk 

\noindent \thesubsection.1. \textbf{The non-unipotent characters in the block.} Proposition \ref{2prop1} says that any non-trivial $\ell$-character  of $\T^{cF}$ is in general position. Consequently, the corresponding induced character  is an actual irreducible character of $G$ (up to a sign). It is worth pointing out that this result is a consequence of a deep property of the cohomology of the Deligne-Lusztig variety $\Y(\dot c)$ \cite[Corollary 9.9]{DeLu}: for $\theta$ a non-trivial $\ell$-character of $\T^{cF}$, the $\theta$-isotypic component $\Hc^i(\Y(\dot c),K)_\theta$ of the cohomology in degree $i$ is non-zero for $i=\ell(c)=r$ only. 

\sk

The Frobenius endomorphism $F^\delta$ acts on $\T^{cF}$. Moreover, since $F^\delta$ acts trivially on $W$, the representative $\dot c$ of $c$ can be chosen to be $F^\delta$ -stable. In that case,  $\T^{cF} \rtimes \langle F^\delta \rangle_{\mathrm{mon}}$ acts on the variety $\Y(\dot c)$, leading to an linear action on the cohomology which satisfies

\centers{$ F^\delta \big(  \Hc^r(\Y( \dot c), K)_\theta \big) \, = \,  \Hc^r(\Y( \dot c), K)_{F^{\delta}(\theta)} \, = \,   \Hc^r(\Y( \dot c), K)_{v \cdot \theta} $}

\noindent where $v = c F(c) \cdots F^{\delta-1}(c)$. Note that $v$ is a generator (of order $h_0 = h/\delta$) of the cyclic group $C_W(c\sigma)$. Since the actions of $F^\delta$ and $G$ commute, the isotypic components associated to $\ell$-characters lying in the same orbit under $C_W(c\sigma)$ are isomorphic (via some power of $F^\delta$).

\begin{notation}\label{2not1}For $\theta$ a non-trivial $\ell$-character of $\T^{cF}$, the $\theta$-isotypic part of the bimodule $\Hc^r(\Y(\dot c),K)$ is a simple $KG$-module that will be denoted by $Y_\theta$. The isomorphic class of this module depends only on the orbit of $\theta$ under $C_W(c\sigma)$; the corresponding character will be denoted by $\chi_\theta$.
\end{notation}

\begin{prop}\label{2prop2}When $\theta$ runs over $\big[\Irr_\ell(\T^{cF}) / C_W(c\sigma)\big]$ and is assumed to be non-trivial, the characters $\chi_\theta$ are distinct irreducible cuspidal characters. Furthermore, they all have the same restriction to the set of $\ell$-regular elements of~$G$.
 \end{prop}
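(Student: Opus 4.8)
The plan is to reduce all four assertions to the identity $\chi_\theta=(-1)^r\mathrm{R}_c(\theta)$ in the Grothendieck group of $KG$, where $r=\ell(c)$. This identity holds because, $\theta$ being in general position (Proposition~\ref{2prop1}(iii)), \cite[Corollary~9.9]{DeLu} forces the $\theta$-isotypic cohomology $\Hc^i(\Y(\dot c),K)_\theta$ to vanish for $i\neq r$, so that $\mathrm{R}_c(\theta)=(-1)^r[Y_\theta]$ with $Y_\theta$ as in Notation~\ref{2not1}. Given this, irreducibility and distinctness will come from the scalar-product formula for Deligne--Lusztig characters, cuspidality from the position of the Coxeter torus among Levi subgroups, and the common restriction to $\ell$-regular classes from the behaviour of $\mathrm{R}_c$ under $\ell$-reduction; only this last point is genuinely modular.

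First, the scalar-product formula \cite{DeLu} gives that $\langle\mathrm{R}_c(\theta),\mathrm{R}_c(\theta')\rangle_G$ equals the number of $w\in(N_\G(\T_c)/\T_c)^F$ with ${}^w\theta=\theta'$, and $(N_\G(\T_c)/\T_c)^F\simeq C_W(c\sigma)$ by Proposition~\ref{2prop1}(ii). Taking $\theta'=\theta$, general position (Proposition~\ref{2prop1}(iii)) makes this number equal to $1$, so $\mathrm{R}_c(\theta)$ is $\pm$ an irreducible character; together with the sign already computed, $\chi_\theta$ is the irreducible character of $Y_\theta$. Taking $\theta,\theta'$ in distinct $C_W(c\sigma)$-orbits, the number is $0$, hence $\langle\chi_\theta,\chi_{\theta'}\rangle_G=0$ and $\chi_\theta\neq\chi_{\theta'}$; this is exactly the injectivity of $[\theta]\mapsto\chi_\theta$ on the non-trivial orbits.

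Next I would use that a Coxeter element has full support: by Definition~\ref{2de1}, $c$ involves one reflection from each $\phi$-orbit of simple reflections, so the Coxeter torus $\T_c$ --- equivalently, since $a(h)=1$, a proper $F$-stable Levi subgroup cannot have order divisible by $\Phi_h(q)$ --- is not contained in any proper $F$-stable Levi subgroup of $\G$, nor is any of its $\G^F$-conjugates. Consequently, for every $F$-stable Levi subgroup $\L$ of a proper $F$-stable parabolic subgroup, the Deligne--Lusztig restriction ${}^{*}\mathrm{R}_{\L}^{\G}(\chi_\theta)=\pm\,{}^{*}\mathrm{R}_{\L}^{\G}\mathrm{R}_{\T_c}^{\G}(\theta)$ vanishes --- by the Mackey formula, or, avoiding it, because general position confines $\chi_\theta=\pm\mathrm{R}_{\T_c}^{\G}(\theta)$ to the rational series of $(\T_c,\theta)$, so that a non-zero ${}^{*}\mathrm{R}_{\L}^{\G}(\chi_\theta)$ would place a $\G^F$-conjugate of $\T_c$ inside $\L$ (see \cite{DeLu}). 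Hence each $\chi_\theta$ is cuspidal.

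Finally, since $\theta$ is an $\ell$-character of $\T^{cF}$, its values are $\ell$-power roots of unity, which reduce to $1$ in $k$; thus $d_{\T^{cF}}(\theta)=d_{\T^{cF}}(1)$ for \emph{every} $\ell$-character $\theta$. Using that Deligne--Lusztig induction commutes with the decomposition map, $d_G(\mathrm{R}_c(\theta))=\mathrm{R}_c(d_{\T^{cF}}(\theta))=\mathrm{R}_c(d_{\T^{cF}}(1))$ is independent of $\theta$, and since $\chi_\theta=(-1)^r\mathrm{R}_c(\theta)$ the restriction of $\chi_\theta$ to the $\ell$-regular classes of $G$ is the same for all non-trivial $\ell$-characters $\theta$. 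I expect this last step to be the real obstacle: the data carried by $\Rgc(\Y(\dot c),\Lambda)$ can only separate the $\ell$-characters of $\T^{cF}$ up to the block idempotent of $\Lambda\T^{cF}$ that collects \emph{all} of them, so an integral/geometric argument bears only on the \emph{sum} $\sum_\theta\chi_\theta$ (via the reduction of the corresponding $\Lambda G$-lattice, torsion-free by Corollary~\ref{1cor2}) and not on the individual $\chi_\theta$; isolating the latter genuinely requires the compatibility of $\mathrm{R}_c$ with the decomposition map.
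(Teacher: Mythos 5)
Your argument is correct and runs along essentially the same lines as the paper's: irreducibility and distinctness follow from the Mackey/orthogonality formula combined with general position (Proposition~\ref{2prop1}(iii)), cuspidality from the ellipticity of the Coxeter torus, and the common restriction from the triviality of $\ell$-characters on $\ell$-regular elements. The only genuine deviation is in the last step. You go through the compatibility of Deligne--Lusztig induction with the decomposition maps, $d_G\circ\mathrm{R}_c=\mathrm{R}_c\circ d_{\T^{cF}}$, after noting $d_{\T^{cF}}(\theta)=d_{\T^{cF}}(1)$ for all $\ell$-characters $\theta$; the paper instead reads the conclusion directly from the character formula \cite[Proposition~12.2]{DM}: for an $\ell$-regular element $su$ of $G$, the semisimple part $s$ is itself $\ell$-regular, so $\theta({}^g s)=1$ for any $\ell$-character $\theta$ and every $g$ with ${}^g s\in\T^{cF}$, whence $\mathrm{R}_c(\theta)(su)$ visibly does not depend on $\theta$. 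Both routes are valid; the paper's is more elementary in that it only needs the pointwise character formula, not the Brou\'e--Michel commutation theorem. Your closing remark that isolating the individual $\chi_\theta$ ``genuinely requires'' compatibility of $\mathrm{R}_c$ with the decomposition map overstates the difficulty --- the character formula already does the job without any appeal to $\Rgc(\Y(\dot c),\Lambda)$ or block-level data. (For cuspidality, the paper simply cites \cite[Corollary~2.19]{Lu2} rather than rederiving the ellipticity of $\T_c$ from $a(h)=1$, but that is a matter of sourcing, not of method.)
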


\begin{proof} The $F$-conjugacy class of a Coxeter element is cuspidal. In other words, $c$ is not contained in any  proper $F$-stable parabolic subgroup of $W$. An analogue property holds for the torus $\T_c$, and we deduce from \cite[Corollary 2.19]{Lu2} that the characters $\chi_\theta$ are cuspidal.  

\sk

Let $\theta$ and $\theta'$ be two non-trivial $\ell$-characters of $\T^{cF}$. The Mackey formula \cite[Theorem 11.13]{DM}, written for the torus $(\T,cF)$ instead of $(\T_c,F)$ yields

\centers{$ \langle \chi_\theta \, ; \, \chi_{\theta'} \rangle_G \, = \, \displaystyle \sum_{w \in W^{cF}} \langle \theta \, ; \, w\cdot \theta' \rangle_{\T^{cF}}.$}

\noindent Since both $\theta$ and $\theta'$ are in general position, we deduce that this sum is non-zero if and only if $\theta$ and $\theta'$ lie in the same orbit under $W^{cF} = C_W(c\sigma)$. 

\sk

Finally, the value on $\ell$-regular elements of any $\ell$-character is trivial. By the character formula \cite[Proposition 12.2]{DM}, it follows that the restriction of $\chi_\theta$ to the set of regular elements does not depend on $\theta$. \end{proof}

\noindent \thesubsection.2. \textbf{The unipotent characters in the block.} These are the irreducible components of the virtual character $\mathrm{R}_{c}(1)$ attached to the Deligne-Lusztig variety $\X(c)$. We review three main theorems in \cite{Lu} giving the fundamental properties of the cohomology of this variety, with a view to establish a simple parametrization of the unipotent characters of the principal $\ell$-block:

\begin{thm}[Lusztig]\label{2thm2}The Frobenius $F^\delta$ acts semi-simply on  $\bigoplus_i \Hc^i(\X(c),K)$ and its eigenspaces are mutually non-isomorphic simple $KG$-modules.
 \end{thm}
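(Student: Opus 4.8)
This is the cornerstone of Lusztig's paper \cite{Lu}, and the plan is to recall the shape of that argument. Since $F^\delta$ acts trivially on $W$ it fixes the Coxeter element $c$, hence stabilises the variety $\X(c)$ and acts on $H := \bigoplus_i \Hc^i(\X(c),K)$ by an action commuting with that of $G$; consequently every generalised eigenspace of $F^\delta$ on $H$ is automatically a $KG$-submodule. As $KG$ is semisimple, the statement reduces to showing that $F^\delta$ is diagonalisable on $H$, that its eigenvalues are pairwise distinct, and that the corresponding (isotypic) eigenspaces are pairwise non-isomorphic simple $KG$-modules; moreover it is enough to argue after extending scalars to an algebraic closure of $K$. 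Finally, $\X(c)$ is affine — it is the quotient $\Y(\dot c)/\T^{cF}$ of an affine variety — and irreducible of dimension $r = \ell(c)$, so its $\ell$-adic cohomology with compact support is concentrated in degrees $r, r+1, \ldots, 2r$ (cf. the remarks preceding Corollary \ref{1cor2}).

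The substance of the proof is an explicit description of $H$ as a $G \times \langle F^\delta\rangle$-module. Lusztig obtains it by induction on the semisimple rank, using a factorisation $c = s_{\beta_1}\cdots s_{\beta_r}$ to present $\X(c)$ as an iterated fibration with fibres $\mathbb{A}^1$ and $\mathbb{G}_m$ over Deligne--Lusztig varieties of proper reductive subgroups, and tying the Frobenius eigenvalues that occur to the eigenvalues of the finite-order element $c\sigma = q^{-1}cF$ on $V = X^\vee(\T)\otimes_{\mathbb{Z}}\mathbb{C}$ recorded in Theorem \ref{2thm1}(iii). Two consequences of this computation are all that is needed: \textbf{(a)} $H$ is multiplicity-free as a $KG$-module — equivalently each $\Hc^i(\X(c),K)$ is multiplicity-free and the $\Hc^i$ for distinct degrees share no irreducible constituent; and \textbf{(b)} the eigenvalues of $F^\delta$ on $H$ are pairwise distinct — within a fixed degree this is part of the computation, while between different degrees it follows because $\Hc^{r+j}(\X(c))$ turns out to be pure of weight $2j$, so that the eigenvalues occurring in degree $r+j$ all have complex absolute value $q^{\delta j}$ and those in different degrees are therefore distinct. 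Granting (a) and (b), the theorem is formal: by (b) the operator $F^\delta$ is diagonalisable on $H$ with pairwise distinct eigenvalues, and each eigenspace is a single $\rho$-isotypic line of $H$, which by (a) is a simple $KG$-module; and, again by (a), these simple modules — sitting in distinct isotypic components of $H$ — are pairwise non-isomorphic. Equivalently, the eigenspaces of $F^\delta$ are exactly the irreducible constituents of the Deligne--Lusztig character $\mathrm{R}_c(1) = \sum_i(-1)^i[\Hc^i(\X(c),K)]$, each occurring with multiplicity one.

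The one real obstacle is the input (a)--(b): pinning down the decomposition of $\mathrm{R}_c(1)$ into irreducible characters together with the Frobenius eigenvalue attached to each constituent. This is the technical heart of \cite{Lu} — the inductive fibration analysis together with a case-by-case treatment of the exceptional types — and it has to be carried out separately for the Ree and Suzuki groups, where $\sigma = q^{-1}F$ only stabilises $X^\vee(\T)\otimes_{\mathbb{Z}}\mathbb{Z}[{\sqrt p}^{-1}]$ and the appropriate analogue of a Coxeter element must be handled directly. One may alternatively deduce (a)--(b) from the Hecke-algebra / braid-monoid description of the endomorphism ring of $\Rgc(\X(c),K)$ (in the spirit of Digne--Michel--Rouquier), which is more conceptual in type $\mathrm{A}$ and in several further cases. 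Once (a)--(b) are available, the passage to semisimplicity of $F^\delta$ and to the simplicity and mutual non-isomorphism of its eigenspaces uses nothing beyond the commutation of $G$ with $F^\delta$ and the purity of the cohomology of $\X(c)$.
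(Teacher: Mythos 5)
The paper does not give a proof of this statement; it is cited directly from Lusztig's work \cite{Lu} (Theorem \ref{2thm2} and Theorems \ref{2thm3}, \ref{2thm4} are all quoted as known). So there is no in-paper argument to compare against; your proposal is essentially a recollection of Lusztig's own proof, and the high-level shape you describe — reduce to multiplicity-freeness of $\bigoplus_i \Hc^i(\X(c),K)$ as a $KG$-module plus pairwise distinctness of the Frobenius eigenvalues, then deduce these two facts from the inductive $\mathbb{A}^1$/$\mathbb{G}_m$-fibration presentation of $\X(c)$ — is accurate.

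However, the purity claim you use to get distinctness of eigenvalues \emph{across} degrees is false in general. You assert that $\Hc^{r+j}(\X(c),K)$ is pure of weight $2j$, so that eigenvalues in different degrees have different absolute values. This fails precisely when cuspidal unipotent characters occur: all cuspidal constituents of the Coxeter cohomology sit in the middle degree $\Hc^r$, and their Frobenius eigenvalues can have absolute values larger than $1$. For instance in type $\mathrm{E}_7$ the middle cohomology $\Hc^r(\X(c))$ contains both the Steinberg character (eigenvalue $1$, weight $0$) and the cuspidal characters $\mathrm{E}_7[\pm \mathrm{i}]$ with Frobenius eigenvalue $\pm\mathrm{i}\, q^{7/2}$ (weight $7$); similar mixed weights occur in $\mathrm{E}_8$, $\mathrm{F}_4$, $\mathrm{G}_2$, etc. The correct range of weights in $\Hc^r$ is the full Deligne interval $[0,r]$, not a single weight. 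Lusztig establishes that the eigenvalues are pairwise distinct (both within and across degrees) by computing them explicitly — the eigenvalues are of the form $\zeta q^{m\delta/2}$ with $\zeta$ a root of unity, and the pairs $(\zeta,m)$ attached to the various constituents are determined one by one (cf.\ the table \cite[Table 7.3]{Lu}, and Fact \ref{2fact1} in the present paper) — rather than by invoking purity. So this sub-step of your argument should be replaced by a citation to Lusztig's eigenvalue computation; the purity heuristic does not suffice.
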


Moreover, Lusztig has shown that any eigenvalue of $F^\delta$ on the cohomology can be written $\zeta q^{m\delta/2}$, where $m$ is a non-negative integer and $\zeta$ is a root of unity. 
\pagebreak
These eigenvalues are explicitely determined in \cite[Table 7.3]{Lu}, and one can check the following numerical property by a case-by-case analysis:

\begin{fait}\label{2fact1}The $\ell$-reduction $\Lambda \twoheadrightarrow k$ induces a bijection between the eigenvalues of $F^\delta$ and the $h_0$-th roots of unity in $k$.
\end{fait}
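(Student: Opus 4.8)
The plan is to deduce the Fact from Lusztig's explicit list of Frobenius eigenvalues \cite[Table~7.3]{Lu}, but only after a counting argument that reduces the case-by-case work to a minimum. On the target side, $k$ contains exactly $h_0$ distinct $h_0$-th roots of unity: the hypotheses on $\ell$ from Section~\ref{2se2} force $\bar q\in k^\times$ to have multiplicative order exactly $h$, whence $\ell\nmid h$ (since $|k^\times|$ is prime to $\ell$) and therefore $\ell\nmid h_0$ as $h_0\mid h$, so $X^{h_0}-1$ is separable over $k$. On the source side, by Theorem~\ref{2thm2} the eigenvalues of $F^\delta$ are pairwise distinct and correspond bijectively to the irreducible constituents of $\bigoplus_i\Hc^i(\X(c),K)$, that is — using \cite[Theorem 5.24]{BMM} — to the unipotent characters of the principal block $b$; these are exactly the non-exceptional characters of the Brauer tree of $b$, so their number equals the number $e$ of edges of that tree. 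Now $b$ has cyclic defect group $D$, and $|D|$ is the $\ell$-part of $|\T^{cF}|$, hence equal to $|\Irr_\ell(\T^{cF})|$ (Proposition~\ref{2prop1}(i), since $T_\ell\subseteq\T^{cF}$); the exceptional vertex carries the non-unipotent characters $\chi_\theta$ with multiplicity $(|D|-1)/h_0$, this being the number of $C_W(c\sigma)$-orbits of nontrivial $\ell$-characters, all free by Proposition~\ref{2prop1}(iii). Since in a cyclic-defect block the exceptional multiplicity is $(|D|-1)/e$, this forces $e=h_0$. Thus the $\ell$-reduction map runs between two sets of cardinality $h_0$, and it suffices to show it sends every eigenvalue of $F^\delta$ into $\mu_{h_0}(k)$ and is injective there; surjectivity onto $\mu_{h_0}(k)$ then comes for free.

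From $\delta\mid h$ and $\mathrm{ord}(\bar q)=h$ one gets $\mathrm{ord}(\overline{q^\delta})=h/\gcd(h,\delta)=h_0$, so every integral power of $\overline{q^\delta}$ lies in $\mu_{h_0}(k)$. Lusztig's results give that each eigenvalue of $F^\delta$ has the shape $\zeta\,q^{m\delta/2}$ with $\zeta$ a root of unity and $m\geq 0$, so what remains is to check from \cite[Table~7.3]{Lu}, type by type, $(\mathrm a)$ that $\overline{\zeta\,q^{m\delta/2}}\in\mu_{h_0}(k)$, and $(\mathrm b)$ that the $h_0$ eigenvalues are pairwise incongruent modulo $\ell$. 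In the model case $\G=\mathrm{GL}_{n+1}$ both are transparent: there $\delta=1$, $h_0=h=n+1$, and the eigenvalues of $F$ are $q^0,q^1,\dots,q^n$, whose reductions run exactly once through $\mu_{n+1}(k)$ because $\bar q$ has order $n+1$ — which is literally the phenomenon the Fact records; the other untwisted types behave in the same way, with $m$ even and $\zeta$ of order dividing $h_0$ on the table.

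The main obstacle is that I know of no uniform argument for $(\mathrm a)$ and $(\mathrm b)$: once the counting above is in place, the statement is precisely this verification, and one really must run through Lusztig's list. The delicate cases are the Suzuki and Ree groups ${}^2\mathrm{B}_2$, ${}^2\mathrm{G}_2$, ${}^2\mathrm{F}_4$, where $\sigma=q^{-1}F$ only stabilizes $X^\vee(\T)\otimes_{\mathbb Z}\mathbb Z[{\sqrt p}^{-1}]$ and $q\notin\mathbb Z$; there the half-integral exponent $m\delta/2$ forces one to check that $\overline{q^{m\delta/2}}$ is a well-defined element of $k$ and that the sign it contributes combines with $\bar\zeta$ to give an $h_0$-th — and not merely a $2h_0$-th — root of unity. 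For these types the ad hoc description of Coxeter elements and of $T_c$ recalled in Section~\ref{2se1} is also needed.
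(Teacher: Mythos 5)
The paper offers no real proof here: it states only that the eigenvalues are listed in Lusztig's Table~7.3 and that the Fact ``can be checked by a case-by-case analysis''. Your proposal genuinely reorganizes this. The counting step is correct and is the real added content: you identify the eigenvalues of $F^\delta$ with the unipotent characters of $b$ (Theorem~\ref{2thm2} together with \cite[Theorem 5.24]{BMM}), hence with the non-exceptional vertices of the Brauer tree, and you extract $e=h_0$ from cyclic-defect theory. (A slightly shorter route to the same number: the inertial index of the principal block is $|N_G(D)/C_G(D)| = |C_W(c\sigma)| = h_0$ directly by Proposition~\ref{2prop1}(ii) and Theorem~\ref{2thm1}(iv), without invoking exceptional multiplicities.) The observation that $\ell\nmid h_0$, hence $|\mu_{h_0}(k)|=h_0$, is also correct. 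This pins down both cardinalities uniformly and reduces the Fact to: reductions land in $\mu_{h_0}(k)$, and are pairwise distinct. That residual claim is exactly what the paper also leaves to Lusztig's table, and you are honest that you see no uniform argument for it; in particular your treatment is not weaker than the paper's on this point, and it is more structured.

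One caution on the untwisted discussion: it is not quite true that $m$ is always even there, so ``$q^{m\delta/2}$ is a power of $q^\delta$'' is not automatic and one really does need the specific root of unity $\zeta$ from the table to compensate; but since you have already flagged this step as a table lookup, this does not affect the logic. Your observation about the Suzuki and Ree groups (half-integral powers of $q$ and the need to interpret $\overline{q^{m\delta/2}}$ over $\Lambda[\sqrt p]$) is a genuine subtlety that the paper glosses over, and it is worth noting that Section~\ref{2se1} of the paper handles these types ad hoc for exactly this kind of reason. In summary: correct as written, modulo the case analysis you explicitly defer; a different and somewhat more conceptual route than the paper's bare assertion, though both ultimately rest on the same numerical data.
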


Besides, the assumption on the prime number $\ell$ forces the $\ell$-reduction of $q^\delta$ to have order $h_0$ in $k^\times$, thus giving a canonical generator of the group of $h_0$-th roots of unity. We now choose a particular square root of $q^\delta$ in $K$ so that any eigenvalue of $F^\delta$ corresponds, via $\ell$-reduction, to a unique power of $q^\delta$. From this observation one can introduce the following notation:

\begin{notation}\label{2not2}For all $j=0,\ldots, h_0-1$, we denote by $\lambda_j$ the unique eigenvalue of $F^\delta$ on $\bigoplus_i \Hc^i(\X(c),K)$ which is congruent to $q^{j\delta}$ modulo $\ell$. Since a particular square root of $q^\delta$ has been chosen, there exists a unique root of unity $\zeta_j$ (in $K$) such that $\lambda_j = \zeta_j q^{m \delta/2}$ for some integer $m$. The eigenspace of $F^\delta$ associated to $\lambda_j$ will be denoted by $Y_j$ and its character by $\chi_j$.
\end{notation}

With this notation, the set $\{\chi_j \, | \, j=0, \ldots, h_0-1\}$ corresponds to the set of unipotent characters in the principal $\ell$-block. Note that it is important to keep track of the root of unity $\zeta_j \in K$ occurring in the eigenvalue $\lambda_j$. It gives indeed the Harish-Chandra series in which the corresponding eigenspace $Y_j$ falls:

\begin{thm}[Lusztig]\label{2thm3}The simple $KG$-modules $Y_i$ and $Y_j$ lie in the same Harish-Chandra series if and only if $\zeta_i = \zeta_j$.
\end{thm}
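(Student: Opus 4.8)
The statement is due to Lusztig, and the plan is to extract it from the explicit analysis of the cohomology of $\X(c)$ carried out in \cite{Lu}. The key input is that \cite{Lu} determines, for every unipotent character $\rho$ occurring in $\bigoplus_i \Hc^i(\X(c),K)$, both the cohomological degree in which it sits and the eigenvalue $\zeta_\rho\, q^{m\delta/2}$ of $F^\delta$ on the corresponding simple eigenspace (cf.\ Theorem \ref{2thm2}); all of this is tabulated in \cite[Table 7.3]{Lu}. On the other hand, the Harish-Chandra series of each such $\rho$ is part of Lusztig's classification of unipotent characters. So what has to be proved is the combinatorial assertion that, restricted to the unipotent characters occurring in $\Hc^\bullet(\X(c),K)$, the map $\rho \mapsto \zeta_\rho$ is constant on Harish-Chandra series and separates them.

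The first half $-$ ``same series $\Rightarrow$ $\zeta_i=\zeta_j$'' $-$ reflects the fact that under Harish-Chandra induction the eigenvalue of $F^\delta$ attached to a unipotent character changes only by an integral power of $q^\delta$; since the image of $q^\delta$ in $k^\times$ has order $h_0$, the root-of-unity part $\zeta$ is therefore an invariant of the Harish-Chandra series, so all the $Y_j$ lying in one series share the same $\zeta_j$. In any event this can be verified directly on \cite[Table 7.3]{Lu}.

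The converse, ``$\zeta_i=\zeta_j \Rightarrow$ same series'', is where the real work lies, and I expect no way around a type-by-type inspection. The obstruction to a formal argument is that, across \emph{all} unipotent characters of $\G$, distinct Harish-Chandra series may perfectly well carry the same invariant $\zeta$ $-$ for instance the principal series and a cuspidal unipotent character whose Frobenius eigenvalue is $1$, such as $G_2[1]$ in type $\mathrm{G}_2$. What rescues the statement is that such characters do not occur in the cohomology of the Coxeter variety: by Lusztig's computation each unipotent character occurs in $\bigoplus_i \Hc^i(\X(c),K)$ with multiplicity one and in a single degree, so the list of Harish-Chandra series actually showing up is short, and one checks case by case, using \cite[Table 7.3]{Lu}, that these have pairwise distinct $\zeta$. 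The proof thus reduces to bookkeeping with Lusztig's tables; the only genuine obstacle is that one really does have to invoke the explicit results of \cite{Lu} rather than a structural principle.
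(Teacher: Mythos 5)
The paper does not prove this statement: Theorem~\ref{2thm3} is one of the three results in Section~2.2 attributed to Lusztig and quoted from \cite{Lu} without proof, so the proper ``proof'' here is a citation, not a re-derivation. You correctly identify this, and your sketch of where the argument lives ([Lu, Table~7.3] together with Lusztig's structure theory of $\Hc^\bullet(\X(c),K)$) is a fair approximation of what \cite{Lu} does. Two caveats on the sketch itself. In the direction ``same series $\Rightarrow$ same $\zeta$'', your invocation of ``the image of $q^\delta$ in $k^\times$ has order $h_0$'' is a red herring: once a square root of $q^\delta$ is fixed, the decomposition $\lambda = \zeta\, q^{m\delta/2}$ determines $\zeta$ uniquely and multiplying $\lambda$ by $q^{n\delta}$ manifestly preserves $\zeta$ --- no $\ell$-modular input is needed. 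What actually powers this direction is Lusztig's general theorem that the Frobenius eigenvalue attached to a unipotent character is an invariant of its Harish-Chandra series. For the converse, your $G_2[1]$ example correctly isolates why restricting to characters occurring in $\Hc^\bullet(\X(c),K)$ is essential, but Lusztig's argument is more constrained than ``pure bookkeeping'': the restriction formula underlying Proposition~\ref{3prop1} identifies the cuspidal support of each $Y_j$ with a cuspidal module in middle degree $r_I$ of a smaller Coxeter variety $\X_I(c_I)$, so the $\zeta$-to-series correspondence reduces to a check on cuspidal pairs only, a much shorter list than ``all Harish-Chandra series showing up.'' The case analysis is there, but the structural scaffolding does more of the work than your writeup suggests.
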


In other words, the set $\{Y_j \, | \, \zeta_j = \zeta\}$ represents the (possibly emply) intersection of the principal $\ell$-block with an Harish-Chandra series. The last result of this section tells us how these modules are precisely arranged in the cohomology of the Deligne-Lusztig variety $\X(c)$:

\begin{thm}\label{2thm4}Let $\zeta \in K$ be a root of unity. Assume that the set
of integers $j$ such that $\zeta_j = \zeta$ is non-empty. Then it is a set of consecutive integers $\intn{m_\zeta}{M_\zeta}$ and the corresponding eigenspaces of $F^\delta$ in the cohomology of $\X(c)$ are arranged as follows:

\centers{$ \begin{array}{c|c|@{\qquad \cdots \qquad}|c} 
\Hc^r(\X(c) ,K) & \Hc^{r+1}(\X(c) ,K)  & \Hc^{r+M_\zeta-m_\zeta}(\X(c) ,K) 
\\[4pt] \hline Y_{m_\zeta} \vphantom{\mathop{A}\limits^n} & Y_{m_\zeta+1} &  Y_{M_\zeta} \\ \end{array}$}

\noindent where $r = \ell(w)$. Moreover, $Y_j$ is a cuspidal $KG$-module if and only if $j = m_{\zeta_j} = M_{\zeta_j}$.

\end{thm}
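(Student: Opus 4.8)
The plan is to extract all of this from Lusztig's explicit computations in \cite{Lu}, combined with the general structure of the cohomology of $\X(c)$ and the parametrisation of Harish-Chandra series. First I would recall that by Theorem \ref{2thm2} the Frobenius $F^\delta$ acts semisimply on $\bigoplus_i \Hc^i(\X(c),K)$ with pairwise non-isomorphic simple eigenspaces, so the $Y_j$ are well-defined and occur each in a single degree; denote by $d_j$ the unique degree $i$ with $Y_j \subseteq \Hc^i(\X(c),K)$. The content of the theorem is then a statement purely about the two functions $j \mapsto \zeta_j$ and $j \mapsto d_j$ on $\{0,\dots,h_0-1\}$, namely that the fibres of $\zeta_\bullet$ are intervals of consecutive integers and that on each such fibre $d_\bullet$ is (up to the shift $r$) the identity, with the extra cuspidality criterion at the end.

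Second, I would invoke \cite[Table 7.3]{Lu}, where both the eigenvalues $\lambda_j = \zeta_j q^{m\delta/2}$ and the degrees in which they appear are listed type by type; since $a(h)=1$ and $\ell$ is chosen as in Section \ref{2se2}, Fact \ref{2fact1} tells us the $h_0$ eigenvalues reduce bijectively to the $h_0$-th roots of unity, so Notation \ref{2not2} does attach a unique $Y_j$ to each residue $q^{j\delta}$, and the indexing $j=0,\dots,h_0-1$ is exactly the one induced by $\ell$-reduction. Matching this indexing against Lusztig's tables, one reads off that the modules sharing a given $\zeta$ occupy consecutive cohomological degrees $r, r+1, \dots$ and that the induced labelling by $j$ respects this order; this is the case-by-case verification at the heart of the argument. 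For the Harish-Chandra series statement I would use Theorem \ref{2thm3}: the set $\{Y_j \mid \zeta_j = \zeta\}$ is exactly one intersection of the principal block with a Harish-Chandra series, so its cardinality and the relative position of its members are controlled by the (known) Hecke algebra of that series — in the Coxeter case these are the ``generic'' series studied by Lusztig, and the block meets each in an interval because the unipotent characters in the block form, within each series, a single chain in the corresponding Brauer tree.

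Third, for the cuspidality criterion: a module $Y_j$ is cuspidal precisely when its Harish-Chandra series is reduced to $\{Y_j\}$, i.e. $\{i \mid \zeta_i = \zeta_j\}$ is a singleton, which by the interval statement just proved means $m_{\zeta_j} = j = M_{\zeta_j}$. This direction again follows from Theorem \ref{2thm3} together with the fact that a non-cuspidal unipotent constituent of $\mathrm{R}_c(1)$ must be Harish-Chandra induced from a proper Levi and hence shares its series with at least one other module; the converse (that an isolated series forces cuspidality) is immediate from the definition of Harish-Chandra series.

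The main obstacle I expect is not conceptual but the reliance on Lusztig's Table 7.3 and the bookkeeping needed to check that the abstract indexing of Notation \ref{2not2} (defined via $\ell$-reduction of eigenvalues) coincides with the ordering by cohomological degree within each $\zeta$-fibre in every type, including the twisted types ${}^2A_n$, ${}^2D_n$, ${}^3D_4$, ${}^2E_6$ and the Ree/Suzuki cases where $\Phi_h$ factors over $\mathbb{Z}[\sqrt p]$. One must also be slightly careful that the chosen square root of $q^\delta$ and the resulting normalisation of $\zeta_j$ are compatible across the whole table so that ``$\zeta_i = \zeta_j$'' is exactly the Harish-Chandra equivalence of Theorem \ref{2thm3}; once that compatibility is pinned down, the consecutiveness and the degree arrangement drop out of the tabulated data, and the cuspidality clause is then a formal consequence.
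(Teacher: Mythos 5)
The paper does not actually prove this statement: it is presented as one of the ``three main theorems in \cite{Lu}'' recalled in Section 2.2, stated without proof and attributed to Lusztig's explicit analysis of $\Hc^\bullet(\X(c),K)$. Your core strategy --- reduce the claim to the functions $j \mapsto \zeta_j$ and $j \mapsto d_j$, then read the answer off Lusztig's Table 7.3 and his structural results on depth --- is therefore the same route the paper implicitly takes, and your identification of the bookkeeping issue (making Notation~\ref{2not2} compatible with Lusztig's normalisation of $\zeta_j q^{m\delta/2}$ in all twisted and Ree/Suzuki types) is exactly the right thing to worry about.

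However, two pieces of the auxiliary reasoning you insert around the citation are not sound as stated. First, you justify consecutiveness by saying the unipotent characters ``form, within each series, a single chain in the corresponding Brauer tree'' --- but the chain structure of the Brauer tree is precisely what the paper \emph{deduces} from this theorem in Section~3 (Lemmas~\ref{3lem1} and~\ref{3lem2}), so invoking it here is circular. Second, in the cuspidality step you identify $\{Y_i \mid \zeta_i = \zeta_j\}$ with the full Harish-Chandra series of $Y_j$, whereas the paper is explicit that this set is only the \emph{intersection} of a Harish-Chandra series with the principal block. From this the forward implication (cuspidal $\Rightarrow$ singleton) is immediate, but the converse is not: a singleton intersection does not by itself rule out the Harish-Chandra series having further constituents outside the block, and the fact that it does not is part of Lusztig's nontrivial depth analysis (the remark after the theorem that $M_{\zeta_j}-m_{\zeta_j}$ equals the depth of $Y_j$, i.e.\ the obstruction to cuspidality). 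Calling this ``a formal consequence'' and ``immediate from the definition of Harish-Chandra series'' overstates what the definitions give you; both directions of the cuspidality criterion need to come from Lusztig's theorem itself, not from general Harish-Chandra theory.
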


More generally, the number $M_{\zeta_j} - m_{\zeta_j}$ measures the \emph{depth} of $Y_j$ as defined in \cite{Lu}, that is the obstruction of $Y_j$ from being cuspidal.

\subsection{Brauer tree of the principal block\label{2se3}}

The irreducible characters in the principal $\ell$-block split into two distinct families: $\big\{\chi_\theta \, | \, \theta \in [\mathrm{Irr}_\ell\T^{cF}/C_W(c\sigma)] \ \text{and} \ \theta \neq 1 \big\} $ and $\{\chi_i \, | \, i=0, \ldots, h_0 -1\}$. 
\pagebreak
The characters in the first set have the same restriction to $G_{\ell'}$ and as such, play the same role in the modular representation theory of $G$. If we define the \emph{exceptional character} $\chi_\mathrm{exc}$ to be the sum of these elements, then the structure of the block can be expressed in terms of the elements of the set $\mathscr{V} = \{\chi_0, \chi_1, \ldots, \chi_{{h_0}-1}\}\cup \{ \chi_\mathrm{exc}\}$. More precisely, from the theory of blocks with cyclic defect groups one knows that the character of any indecomposable projective $\Lambda G$-module can be written as $[P] = \chi + \chi'$ where $\chi$ and $\chi'$ are two distinct elements of $\mathscr{V}$.  Following Brauer, one can define a graph $\Gamma$ encoding the structure of the block:

\begin{itemize}

\item[$\bullet$] the vertices of the graph are labeled by $\mathscr{V} = \{\chi_0, \chi_1, \ldots, \chi_{{h_0}-1}\}\cup \{ \chi_\mathrm{exc}\}$. The vertex associated to $\chi_\mathrm{exc}$ is called  the \emph{exceptional vertex} or the \emph{exceptional node}. By extension, the other vertices are said to be  \emph{non-exceptional};

\item[$\bullet$] two vertices $\chi$ and $\chi'$ are connected by an edge if and only if $\chi + \chi'$ is the character of an indecomposable projective $\Lambda G$-module.

\end{itemize}

\noindent This graph $\Gamma$ is actually a tree, which we refer as the \emph{Brauer tree} of the block. The edges of the tree can be labeled either by the indecomposable projective $\Lambda G$-modules in the block or by the simple $kG$-modules in the block (for the indecomposable projective modules are exactly the projective covers of the simple modules).

\begin{exemple}\label{2ex1}Let $D$ be a cyclic $\ell$-group and $E$ an $\ell'$-subgroup of $\mathrm{Aut}(D)$. The Brauer tree of the unique $\ell$-block of the group $H = D \rtimes E$ is a star. Indeed, the simple $kH$-modules are obtained by  inflation of the simple $kE$-modules (with a trivial action of $D$). Since $E$ is an $\ell'$-group, such a module lifts uniquely to a $\Lambda E$-lattice $\widetilde S$ with projective cover $\mathrm{Ind}_E^H \widetilde S$. The character of the latter decomposes into

\centers{$ [\mathrm{Ind}_{E}^{H} \, \widetilde{S}] \, = \, [\widetilde{S}] + \chi_\mathrm{exc}$}

\noindent where $\chi_\mathrm{exc}$ is the sum of the irreducible ordinary characters of $H$ with trivial restriction to $D$ (meaning that the restriction to $D$ comes from a trivial representation of $D$). By construction, the Brauer tree has the following shape:

\sk

\begin{figure}[h] 
\centers[3]{\begin{pspicture}(4,4)
  \psset{linewidth=1pt}

  \cnode[fillstyle=solid,fillcolor=black](2,2){5pt}{A2}
    \cnode(2,2){8pt}{A}
  \cnode(4,2){5pt}{B}
  \cnode(3.73,3){5pt}{C}
  \cnode(3.73,1){5pt}{D}
    \cnode(3,3.73){5pt}{H}
    \cnode(3,0.27){5pt}{I}
    \cnode(2,4){5pt}{J}

  \cnode(0,2){5pt}{E}
  \cnode(0.27,3){5pt}{F}
  \cnode(0.27,1){5pt}{G}

  \ncline[nodesep=0pt]{A}{B}
  \ncline[nodesep=0pt]{A}{C}
  \ncline[nodesep=0pt]{A}{D}
  \ncline[nodesep=0pt]{A}{E}
  \ncline[nodesep=0pt]{A}{F}
  \ncline[nodesep=0pt]{A}{G}
  \ncline[nodesep=0pt]{A}{H}
  \ncline[nodesep=0pt]{A}{I}
  \ncline[nodesep=0pt]{A}{J}

\psellipticarc[linestyle=dotted,linewidth=1.5pt](2,2)(2,2){103}{135}
\psellipticarc[linestyle=dotted,linewidth=1.5pt](2,2)(2,2){225}{287}


\end{pspicture}}
\caption{Brauer tree of $D \rtimes E$}
\end{figure}
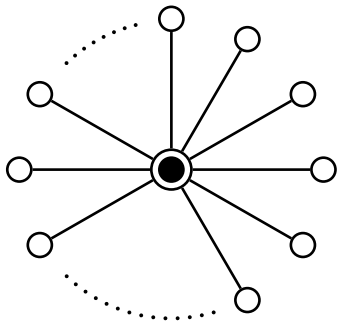

\noindent Rickard \cite{Ri}, \cite{Ri3} and Linckelmann \cite{Lin1} have shown that one can "unfold" the Brauer tree of the principal $\ell$-block of $G$ in order to obtain this star, and that this "unfolding" is the reflect of a splendid equivalence between the principal $\ell$-blocks of $G$ and $\T^{cF} \rtimes C_W(c\sigma)$. One of the main goals of this article is to show that this operation can be performed by means of the complex $\Rgc(\Y(\dot c),\Lambda)$ representing the cohomology of the Deligne-Lusztig variety $\Y(\dot c)$, thus giving a geometric explanation of Rickard-Linckelmann's result for finite reductive groups.

\end{exemple}

The conjecture of Hiss, L\"ubeck and Malle stated in \cite{HLM} comes within this geometric framework. It predicts the shape of the Brauer tree of the principal $\ell$-block of $G$ in terms of the parametrization of the unipotent characters given previously:

\begin{conjHLM}[Hiss-L\"ubeck-Malle]\label{2conj1}Let $\Gamma^\bullet$  denote the graph obtained from the Brauer tree of the principal $\ell$-block by removing the exceptional node and all edges incident to it. Then the following holds:

\begin{enumerate} 

\item[$\mathrm{(i)}$]  The connected components of $\Gamma^\bullet$ are labeled by the Harish-Chandra series, hence by the roots of unity $\zeta_j$'s.

\item[$\mathrm{(ii)}$]  The connected component corresponding to a root $\zeta$ is:

\centers[0]{ \begin{pspicture}(10,1)
  \psset{linewidth=1pt}

  \cnode(0,0.2){5pt}{A}
  \cnode(2,0.2){5pt}{B}
  \cnode(4,0.2){5pt}{C}
  \cnode(8,0.2){5pt}{D}
  \cnode(10,0.2){5pt}{E}
  \ncline{A}{B}  \naput[npos=-0.1]{$\vphantom{\Big(}\chi_{m_\zeta}$} \naput[npos=1.1]{$\vphantom{\Big(}\chi_{m_\zeta+1}$}
  \ncline{B}{C}  \naput[npos=1.1]{$\vphantom{\Big(}\chi_{m_\zeta+2}$}
  \ncline[linestyle=dashed]{C}{D}
  \ncline{D}{E}  \naput[npos=-0.1]{$\vphantom{\Big(}\chi_{M_\zeta-1}$} \naput[npos=1.1]{$\vphantom{\Big(}\chi_{M_\zeta}$}
\end{pspicture}}

\item[$\mathrm{(iii)}$] The vertices labeled by $\chi_{m_{\zeta}}$ are the only nodes connected to the exceptional node.
\end{enumerate}

\end{conjHLM}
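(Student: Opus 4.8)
Under assumption $\mathrm{(W)}$, the plan is to realize every indecomposable projective $\Lambda G$-module of the block $b$ as a homotopy direct summand of $b\Rgc(\Y(\dot c),\Lambda)$ and to read the tree off from there, following the Coxeter-case philosophy of Lusztig. Since $G$ acts freely on $\Y(\dot c)$, Corollary~\ref{1cor1} makes $b\Rgc(\Y(\dot c),\Lambda)$ a perfect complex of $\Lambda Gb$-modules, and as $\Y(\dot c)$ is affine of dimension $r=\ell(c)$ its cohomology is concentrated in degrees $r,\dots,2r$ with $b\Hc^r(\Y(\dot c),\Lambda)$ torsion-free (Corollary~\ref{1cor2}). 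Choosing $\dot c$ to be $F^\delta$-stable and passing to a finite projective model carrying an endomorphism $\mathscr F$ representing $F^\delta$ (as in Section~\ref{1se2}), the block decomposition of $\Lambda[T]/\mathrm{Ann}(\mathscr F)$ splits $b\Rgc(\Y(\dot c),\Lambda)$, in $K^b(\Lambda Gb$-$\mathrm{proj})$, as $\bigoplus_{j=0}^{h_0-1} C_j$ with $C_j := \big(b\Rgc(\Y(\dot c),\Lambda)\big)_{(\lambda_j)}$.

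Next I would compute $H^\bullet(KC_j)$. Over $K$ the block part of $\Hc^\bullet(\Y(\dot c),K)$ is the sum of the cuspidal modules $Y_\theta$ (for $\theta\neq 1$ an $\ell$-character of $\T^{cF}$), all in degree $r$ by \cite[Cor.~9.9]{DeLu} and Proposition~\ref{2prop1}, together with the unipotent part $\Hc^\bullet(\X(c),K)_b=\bigoplus_j Y_j$ arranged by Harish-Chandra series as in Theorem~\ref{2thm4}. Now $F^\delta$ permutes the $Y_\theta$ along the size-$h_0$ orbits of $C_W(c\sigma)=\langle v\rangle$, and one checks, via Deligne-Lusztig theory, that $F^h$ acts on each $Y_\theta$ by a scalar congruent to $1$ modulo $\ell$; combined with Fact~\ref{2fact1} this forces the $\ell$-reductions of the $F^\delta$-eigenvalues occurring in degree $r$ to run exactly once through $\{\overline{\lambda_j}\}_j$, each $\lambda_j$ picking out one $Y_\theta$ per orbit. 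Hence $KC_j$ has cohomology affording $\chi_\mathrm{exc}=\sum_{[\theta]\neq[1]}\chi_\theta$ in degree $r$ and $\chi_j$ in degree $r+\delta_j$, where $\delta_j:=j-m_{\zeta_j}\geq 0$, and nothing else. In particular every $\lambda_j$ is a minimal eigenvalue, so $\mathrm{(W)}$ applies: the cohomology of $C_j$ is torsion-free, the universal coefficient theorem (Theorem~\ref{1thm1}) gives that $H^\bullet(\overline{C_j})$ is the reduction of $H^\bullet(C_j)$, and Lemma~\ref{1lem1}(b) makes $C_j$ homotopy equivalent to a complex $0\to Q^{(j)}_r\to\cdots\to Q^{(j)}_{r+\delta_j}\to 0$ of finitely generated projectives.

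When $\delta_j=0$ this complex is a single projective module $Q^{(j)}$ in degree $r$, with $[Q^{(j)}]=\chi_\mathrm{exc}+\chi_{m_{\zeta_j}}$ and $\overline{Q^{(j)}}$ computing the reduction of $H^r(C_j)$; since these are the only elements of $\mathscr V$ occurring, $Q^{(j)}$ is indecomposable and $\{\chi_\mathrm{exc},\chi_{m_{\zeta_j}}\}$ is an edge of the tree, which gives part~$\mathrm{(iii)}$ and the one-vertex components of part~$\mathrm{(ii)}$. For $\delta_j\geq 1$, $C_j$ is a minimal perfect complex over the Brauer-tree algebra $\Lambda Gb$ whose $K$-cohomology sits, as lattices affording $\chi_\mathrm{exc}$ and $\chi_j$, in the two extreme degrees of a complex with $\delta_j+1$ terms; by the structure of perfect complexes over a Brauer-tree algebra (see \cite{Ri}) this forces $C_j$ to be the interval complex along a geodesic $\chi_\mathrm{exc}-\chi_{i_1}-\cdots-\chi_{i_{\delta_j}}-\chi_j$ of the tree, with terms the indecomposable projectives of the edges crossed and the intermediate vertices cancelling in consecutive pairs. \textbf{The main obstacle} is then to pin down $(i_1,\dots,i_{\delta_j})=(m_{\zeta_j},\dots,j-1)$ in Lusztig's order: the bare family of $K$-Euler characteristics $\chi_\mathrm{exc}+(-1)^{\delta_j}\chi_j$ does not determine the tree, so one genuinely needs finer, non-$K_0$ input. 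I would induct on $j$ inside the series $\zeta_j$, invoking Harish-Chandra theory ($Y_j$ is cuspidal iff $\delta_j=0$, and $Y_i,Y_{i'}$ lie in the same series iff $\zeta_i=\zeta_{i'}$, Theorems~\ref{2thm3}--\ref{2thm4}) together with the geometric description of Harish-Chandra induction on $\X(c)$ and $\Y(\dot c)$ relative to a proper parabolic, which links $C_{j-1}$ to $C_j$ and exhibits the geodesic of $C_j$ as that of $C_{j-1}$ extended by one edge at the vertex $\chi_{j-1}$. A final count — the tree has exactly $h_0$ edges, which must split as one leg per Harish-Chandra series plus $M_\zeta-m_\zeta$ internal edges per series — then rules out every other configuration, yielding parts~$\mathrm{(i)}$ and~$\mathrm{(ii)}$.
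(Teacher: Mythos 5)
Your $\delta_j = 0$ case is essentially the paper's Proposition~\ref{3prop2}: under $\mathrm{(W)}$, the $(\lambda_{m_\zeta})$-eigenspace of $\mathscr F$ on the projective model of $b\Rgc(\Y(\dot c),\Lambda)$ collapses to a single indecomposable projective in degree $r$ with character $\chi_\mathrm{exc}+\chi_{m_\zeta}$, giving the edges at the exceptional node. But the rest of your argument has a genuine gap.

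First, you assert that ``every $\lambda_j$ is a minimal eigenvalue, so $\mathrm{(W)}$ applies.'' This is false. ``Minimal'' is defined in the paper as $j = m_{\zeta_j}$ (equivalently: eigenvalues of $F^\delta$ on $\Hc^r(\X(c),K)$), and $\mathrm{(W)}$ only gives torsion-freeness of $C_j$ for those $j$. When $\delta_j\geq 1$, the $K$-cohomology of $C_j$ lives in degrees $r$ and $r+\delta_j$, so $\lambda_j$ is by your own computation not minimal, and you have no torsion-freeness hypothesis for $C_j$. What you actually need is the stronger assumption $\mathrm{(S)}$, which the paper reserves for Section~4. Second, even granting $\mathrm{(S)}$, your invocation of ``the structure of perfect complexes over a Brauer-tree algebra'' to deduce that $C_j$ is an interval complex along a geodesic is circular: the Brauer tree is precisely what Conjecture \hyperref[2conj1]{(HLM)} is meant to determine, and that structure theorem presupposes you already know the tree. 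You flag this as ``the main obstacle,'' but the inductive step linking $C_{j-1}$ to $C_j$ via Harish-Chandra functors applied to $\Y(\dot c)$ is left entirely unworked, and no such formula for $\Y$ (analogous to Proposition~\ref{3prop1} for $\X$) appears in the paper.

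The paper's proof is structurally different precisely so as to avoid all of this. Part~$\mathrm{(ii)}$ is established in Section~\ref{3se1} without any torsion hypothesis, by pure Harish-Chandra restriction from $\X(c)$: since a proper Levi $L_I$ has order prime to $\ell$, $kL_I$-mod is semisimple, and the restriction formula ${}^*\mathrm{R}_{L_I}^G(Y_j) \simeq Y_j^I \oplus Y_{j-1}^I$ (Proposition~\ref{3prop1}, Formula~\ref{3eq1}) forces, via Lemmas~\ref{3lem1} and~\ref{3lem2}, that any non-exceptional edge joins $\chi_j$ to $\chi_{j-1}$ with $\zeta_j=\zeta_{j-1}$, and that all such edges occur. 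Only the cuspidal modules, i.e. part~$\mathrm{(iii)}$, require the geometry of $\Y(\dot c)$ and assumption $\mathrm{(W)}$, exactly as in your $\delta_j=0$ argument. Collapsing both parts into a single argument over the cohomology complex of $\Y$, as you attempt, would need to recover the tree without presupposing it, and the paper evades that difficulty by not trying.
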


The validity of this conjecture has been checked in all cases where the Brauer tree was known, that is for all quasi-simple groups except the groups of type E$_7$ and E$_8$. A  general proof will be exposed  in the next section, but under a precise assumption on the torsion in the cohomology of $\X(c)$ (see the introduction or section \ref{3se2} for more details).

\sk

As defined above, the Brauer tree of a block encodes only its decomposition matrix. However, as one can notice in the previous example, once the nodes have been labeled, there are different ways to draw the star. The planar embedding of the Brauer tree is actually a fundamental datum of the block: consider a vertex in the tree, together with all the edges incident to it, labeled by the simple $kG$-modules $S_1, \ldots, S_m$. Then from the general theory there exist uniserial modules which have exactly the $S_i$'s as composition factors. The unique composition series of any  of these determines an ordering of $S_1,\ldots, S_m$ which, up to cyclic permutation, does not depend on the module \cite{Gr}. In the \emph{planar embedded}  Brauer tree, the edges incident to that vertex  are labeled anti-clockwise according to this ordering. For a description of this ordering in terms of extentions, one can readily check that two edges labeled by the simple modules $S$ and $S'$ are (anti-clockwise) adjacent if and only if $\mathrm{Ext}_{kG}^1(S',S) \neq 0$.

\sk

The Brauer tree, together with its planar embedding, fully encodes the representation theory of the block since it determines the block algebra up to Morita equivalence.

\begin{exemple}\label{2ex2}We return to the previous example in the special case where  $E$ is a cyclic group, generated by an element $x \in \mathrm{Aut}(D)$ of order $m$ prime to $\ell$. Recall that $D$ is also assumed to be cyclic; consequently, there exists an integer $n$ prime to $|D|=\ell^\alpha$, uniquely determined in $\intn{0}{\ell^\alpha -1}$ such that $x$ acts on any element $y \in D$ by raising $y$ to the power of $n$. Since $x$ has order $m$, the $\ell$-reduction of $n$ has order $d | m$. Besides, $\ell > m$ so that 
$v_\ell(n^d-1) = v_\ell(n^m -1) \geq \ell^\alpha$, which forces $d=m$. 

\sk

By Hensel's lemma, there exists a (unique) primitive $m$-th root of unity $\zeta \in \mathbb{Z}_\ell^\times$ such that $\zeta \equiv n \ \mathrm{mod} \ \ell \mathbb{Z}_\ell$. If we number the exceptional characters $\eta_j : H \longrightarrow \mathbb{Z}_\ell^\times$ in such a way that $\eta_j(x) = \zeta^j$, then the planar embedded Brauer tree is given by

\begin{figure}[h]
\centers[0]{\begin{pspicture}(4,4.5)

  \psset{linewidth=1pt}

   \cnode[fillstyle=solid,fillcolor=black](2,2){5pt}{A2}
       \cnode(2,2){8pt}{A}
  \cnode(4,2){5pt}{B}
  \cnode(3.73,3){5pt}{C}
  \cnode(3.73,1){5pt}{D}
    \cnode(3,3.73){5pt}{H}
    \cnode(3,0.27){5pt}{I}
    \cnode(2,4){5pt}{J}

  \cnode(0,2){5pt}{E}
  \cnode(0.27,3){5pt}{F}
  \cnode(0.27,1){5pt}{G}

  \ncline[nodesep=0pt]{A}{B}\ncput[npos=1.4]{$\eta_0$}
  \ncline[nodesep=0pt]{A}{C}\ncput[npos=1.45]{$\eta_1$}
  \ncline[nodesep=0pt]{A}{D}\ncput[npos=1.38]{$\hphantom{aaa}\eta_{m-1}$}
  \ncline[nodesep=0pt]{A}{E}
  \ncline[nodesep=0pt]{A}{F}
  \ncline[nodesep=0pt]{A}{G}
  \ncline[nodesep=0pt]{A}{H}\ncput[npos=1.42]{$\eta_2$}
  \ncline[nodesep=0pt]{A}{I}\ncput[npos=1.4]{$\hphantom{aaa}\eta_{m-2}$}
  \ncline[nodesep=0pt]{A}{J}\ncput[npos=1.4]{$\eta_3$}

\psellipticarc[linestyle=dotted,linewidth=1.5pt](2,2)(2,2){103}{135}
\psellipticarc[linestyle=dotted,linewidth=1.5pt](2,2)(2,2){225}{287}
\psellipticarc[linewidth=1pt]{->}(2,2)(1,1){103}{135}
\psellipticarc[linewidth=1pt]{->}(2,2)(1,1){225}{287}

\end{pspicture}}
\caption{Planar embedded Brauer tree of $D \rtimes E$}
\end{figure}
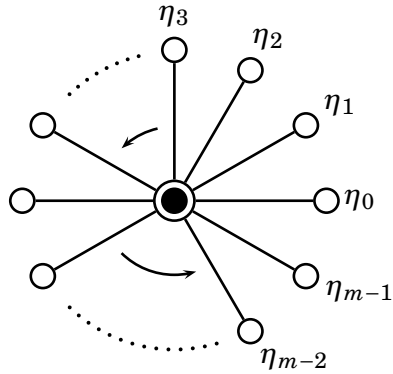
\mk

\noindent  In other words, if we denote by $k_j$ the simple (one-dimensional) $k H$-module attached to the ordinary character $ \eta_j$, then $\mathrm{Ext}_{kH}^1(k_i,k_j)$ is non-trivial if and only if  $i \equiv j+1\mathrm{ \ mod\ } m$.
 \end{exemple}

The previous example has been thoughtfully chosen, since it gives also the planar embedded Brauer tree of the principal $\ell$-block of $H = \T^{cF} \rtimes C_W(c\sigma)$. In this case, one should notice that the generator $x = (cF(c) \cdots F^{\delta-1}(c))^{-1}$ of $C_W(cF)$ acts on $\T^{cF}$ via the Frobenius $F^\delta$, thus raising  any element to the power of $q^\delta$.   On the other side, the non-exceptional characters in the principal $\ell$-block of $G$ are labeled by the eigenvalues of $F^\delta$ on the cohomology of $\X(c)$.  Since the two actions of $F^\delta$ are compatible and give the same parametrization of the characters, it seems reasonable to complete Conjecture \hyperref[2conj1]{(HLM)} with

\begin{conjHLM2}\label{2conj2}The $\ell$-reduction of $q^\delta$ gives a canonical generator of the group of $h_0$-th roots of unity and the corresponding order induces the cyclic ordering around the exceptional node of the Brauer tree of the principal $\ell$-block of~$G$.
\end{conjHLM2}

Such a result is of course only interesting for trees in which different planar embeddings are possible. This is the case for groups of type F$_4$, ${}^2$F$_4$, E$_7$, E$_8$ and ${}^2$G$_2$ only. For the latter, according to Conjectures \hyperref[2conj1]{(HLM)} and  \hyperref[2conj2]{(HLM+)} the Brauer tree should be given by Figure \ref{fig3}, where $\mathrm{i} = \xi^3$ and $\xi$ is the unique $12$th root of unity in $\Lambda^\times$ congruent to $q^5$ modulo $\ell$. The ordering becomes clear if we choose to label the non-exceptional vertices by the integers  $q^{j\delta}$ congruent to the eigenvalues of $F^\delta$.

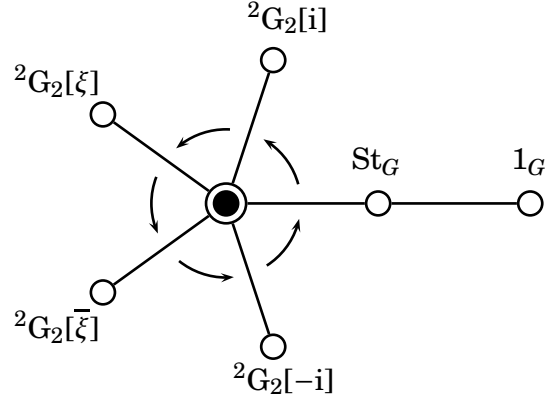
\begin{figure}[h]
\centers[3]{\begin{pspicture}(6,4.5)

  \psset{linewidth=1pt}

  \cnode[fillstyle=solid,fillcolor=black](2,2){5pt}{A2}
  \cnode(2,2){8pt}{A}
  \cnode(4,2){5pt}{B}
  \cnode(6,2){5pt}{C}
  \cnode(2.62,3.9){5pt}{D}
  \cnode(2.62,0.1){5pt}{E}
  \cnode(0.38,3.18){5pt}{F}
  \cnode(0.38,0.82){5pt}{G}

  \ncline[nodesep=0pt]{A}{B}\naput[npos=1.1]{$\vphantom{\Big(} \mathrm{St}_G$}
  \ncline[nodesep=0pt]{B}{C}\naput[npos=1.1]{$\vphantom{\Big(} \mathrm{1}_G$}
  \ncline[nodesep=0pt]{A}{D}\ncput[npos=1.5]{${}^2\text{G}_2[\mathrm{i}]$}
  \ncline[nodesep=0pt]{A}{E}\ncput[npos=1.4]{${}^2\text{G}_2[-\mathrm{i}]$}
  \ncline[nodesep=0pt]{A}{F}\ncput[npos=1.6]{${}^2\text{G}_2[{\xi}]$}
  \ncline[nodesep=0pt]{A}{G}\ncput[npos=1.6]{${}^2\text{G}_2[\overline{\xi}]$}
  \psellipticarc[linewidth=1pt]{->}(2,2)(1,1){15}{60}
  \psellipticarc[linewidth=1pt]{->}(2,2)(1,1){87}{132}
  \psellipticarc[linewidth=1pt]{->}(2,2)(1,1){159}{204}
  \psellipticarc[linewidth=1pt]{->}(2,2)(1,1){231}{276}
  \psellipticarc[linewidth=1pt]{->}(2,2)(1,1){303}{348}

\end{pspicture}}
\caption{Brauer tree for a group of type ${}^2$G$_2$} \label{fig3}
\end{figure}
\bk






\sk

We will give in the last section a proof of this conjecture under the assumption  $\mathrm{(S)}$ (see the introduction).

\section{On the Hiss-L\"ubeck-Malle conjecture}

The aim of this section is to present a general proof of Conjecture \hyperref[2conj1]{(HLM)} under a precise assumption on the torsion in the cohomology. We follow here the geometric approach suggested in \cite{HLM}. The geometry of the Deligne-Lusztig varieties $\X(c)$ and $\Y(\dot c)$, and especially their cohomology with coefficients in $\Lambda$, should contain the information needed to understand  the structure of the principal $\ell$-block of $G$. The fundamental work of Lusztig on these varieties \cite{Lu} will be the starting point for our proof.

\sk

The first part of the proof deals with the non-cuspidal $kG$-modules and their projective cover. We show that the contribution of these modules to the Brauer tree of the principal $\ell$-block consists of the lines represented in the second assertion of the conjecture. The final part of the proof is obtained by showing that the edges labeled by the cuspidal modules are exactly the edges incident to the exceptional node. This is where the assumption $\mathrm{(W)}$ comes in, since it allows us, with the help of the tools introduced in Section \ref{1se2}, to single out projective modules with character $\chi_\mathrm{exc} + \chi_{m_\zeta}$ in the cohomology of $\Y(\dot c)$, thus giving the missing edges.

\subsection{Non-cuspidal \texorpdfstring{$kG$}{kG}-modules\label{3se1}}

Let $I \subset \Delta$ be a $\phi$-stable subset of simple roots. The corresponding standard parabolic subgroup $\P_I$ and Levi complement $\L_I$ are both $F$-stable. Recall  that the Harish-Chandra induction and restriction functors are defined over any coefficient ring $\mathcal{O}$ among $(K,\Lambda,k)$ by

\leftcenters{and}{$ \begin{array}[b]{l} \hphantom{{}^*}\mathrm{R}_{L_I}^G \, : \begin{array}[t]{rcl} \mathcal{O} L_I \text{-}\mathrm{mod} & \longrightarrow & \mathcal{O} G \text{-}\mathrm{mod}  \\ 
N & \longmapsto & \mathcal{O}[G/U_I] \otimes_{\mathcal{O} L_I} N \end{array} \\[20pt]
 {}^*\mathrm{R}_{L_I}^G \, : \begin{array}[t]{rcl} \mathcal{O} G \text{-}\mathrm{mod} & \longrightarrow & \mathcal{O} L_I \text{-}\mathrm{mod}  \\ 
M & \longmapsto & M^{U_I}. \end{array} \end{array}$}

\noindent We give the basic properties of these functors that we shall use in this section (see for example \cite[Section 3.A]{Ge3}):

\begin{itemize}

\item $\mathrm{R}_{L_I}^G$ and ${}^*\mathrm{R}_{L_I}^G$ are exact functors. They stabilize the categories of finitely generated projective modules and finitely generated $\mathcal{O}$-free modules. 

\item $\mathrm{R}_{L_I}^G$ and ${}^*\mathrm{R}_{L_I}^G$ are compatible with scalar extension  $- \otimes_\Lambda K$ and $\ell$-reduction $- \otimes_\Lambda k$.

\end{itemize}

\noindent In particular, the induced morphisms between the Grothendieck groups are compatible with the decomposition maps. More precisely, if we denote by $\mathrm{dec}_G$ (resp. $\mathrm{dec}_{L_I}$) the decomposition map between $K_0(KG$-$\mathrm{mod})$ and $K_0(kG$-$\mathrm{mod})$  (resp. between $K_0(KL_I$-$\mathrm{mod})$ and $K_0(kL_I$-$\mathrm{mod})$), then

\centers{$ \dec_G \circ \mathrm{R}_{L_I}^G \, = \, \mathrm{R}_{L_I}^G \circ \dec_{L_I}\qquad \text{and} \qquad \dec_{L_I} \circ {}^*\mathrm{R}_{L_I}^G \, = \, {}^*\mathrm{R}_{L_I}^G \circ \dec_{G}.$}

We shall first study the projective covers of the non-cuspidal simple $kG$-modules using their Harish-Chandra restriction. This method relies on the fact that the restriction of the cohomology of $\X(c)$ can be expressed in terms of Coxeter varieties associated to "smaller" groups, for which the module categories over $k$ are semi-simple.

\bk

\noindent \thesubsection.1. \textbf{Restriction of a $kG$-module.} From now on, the Deligne-Lusztig variety associated to the Coxeter element $c$ will be simply denoted by $\X$. If $I$ is any $\phi$-stable subset of $\Delta$, one obtains a Coxeter element $c_I$ of $(W_I,F)$ by removing from $c$ the reflexions associated to the simple roots which are not in $I$. Written with the Borel subgroup  $\B_I = \B \cap \L_I$ of $\L_I$, the Deligne-Lusztig variety $\X_I$ associated to $c_I$ is by definition

 \centers{$ \X_I \, = \, \X_{\L_I}(c_I) \, = \, \big\{g\B_I \in \L_I/ \B_I\, \big| \, g^{-1}F(g) \in \B_I c_I \B_I \big\}.$}

\noindent When $I$ is a maximal proper subset of $\Delta$, the cohomology groups of $\X$ and $\X_I$ are closely related (see \cite[Corollary 2.10]{Lu}):

\begin{prop}\label{3prop1}Assume that $I$ is a maximal proper $\phi$-stable subset of $\Delta$. Then there exists an isomorphism of $KL_I$-modules, compatible with the action of $F^\delta$

\centers{${}^*\mathrm{R}_{L_I}^G \big(\Hc^i(\X,K)\big) \, \simeq \, \Hc^{i-1}(\X_I,K) \oplus \Hc^{i-2}(\X_I,K)(-1)$}

\noindent where the symbol $(-1)$ indicates a Tate twist. 
\end{prop}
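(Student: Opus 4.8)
The plan is to reduce the statement to a geometric fibration relating $\X$ and $\X_I$, and then take cohomology. First I would recall the standard "parabolic induction" picture for Deligne-Lusztig varieties: Harish-Chandra restriction ${}^*\mathrm{R}_{L_I}^G$ is given by $M \mapsto M^{U_I}$, and on the level of varieties this corresponds to looking at the fixed points of $U_I$, or rather to a suitable piece of the Bruhat stratification of $\X$. Concretely, for a maximal proper $\phi$-stable $I \subset \Delta$, the variety $\X = \X(c)$ admits a stratification whose pieces are indexed by the (two, since $I$ is maximal) double cosets $W_I \backslash W / W_I$ relevant to $c$, and the "large" stratum is an affine bundle over $\X_I$ whose fibers account for the reflection $s_\beta$ with $\beta \in \Delta \setminus I$ that was removed from $c$ to obtain $c_I$. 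The key geometric input — which is exactly \cite[Corollary 2.10]{Lu} cited in the statement — is that ${}^*\mathrm{R}_{L_I}^G \X(c)$, in the appropriate derived sense, is built from $\X_I(c_I)$ via a one-dimensional affine fibration, possibly iterated once, which is why a single removed reflection contributes the two consecutive cohomological degrees $i-1$ and $i-2$ with a Tate twist on the second.

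Next I would make this precise at the level of complexes: one shows that $R\Gamma_c$ of the $U_I$-fixed points (equivalently, $\mathrm{R}\Gamma_c$ applied to the variety $\Y$-side and then pushed to $\X_I$) sits in a distinguished triangle, or splits as a direct sum after passing to $K$-coefficients, into $R\Gamma_c(\X_I,K)[-1]$ and $R\Gamma_c(\X_I,K)(-1)[-2]$. The Tate twist $(-1)$ and the degree shifts are forced by the $\mathbb{A}^1$-bundle structure: $\mathrm{H}_c^\bullet(\mathbb{A}^1,K) = K(-1)$ in degree $2$, so a fibration with $\mathbb{G}_m$-fiber (which is what the removed simple reflection $s_\beta$ contributes, after taking into account the torus $\T^{wF}$) gives $K$ in degree $1$ and $K(-1)$ in degree $2$, yielding precisely the stated decomposition upon applying the Leray/Künneth spectral sequence — which degenerates here because everything is a bundle over $\X_I$. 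The compatibility with $F^\delta$ comes for free since all the maps in sight ($U_I$-fixed points, the projection to $\X_I$, the bundle trivialization up to Frobenius twist) are defined over the relevant finite field and hence $F^\delta$-equivariant; the Tate twist records exactly the $F^\delta$-eigenvalue $q^\delta$ coming from the fiber direction.

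The main obstacle, as I see it, is not the formal homological bookkeeping but pinning down the precise geometric relationship between $\X(c)$ and $\X_I(c_I)$ — i.e. reproving or carefully invoking \cite[Corollary 2.10]{Lu}. One has to verify that when $I$ is a \emph{maximal} proper $\phi$-stable subset and $c_I$ is obtained from $c$ by deleting the reflections outside $I$, the relevant stratum of $\X$ really is an iterated affine-line bundle over $\X_I$ of the right relative dimension, and that the complementary strata contribute nothing to ${}^*\mathrm{R}_{L_I}^G$ (this uses that $c$ is a Coxeter element, so the Bruhat cells involved are as small as possible). Since the proposition explicitly cites Lusztig's corollary, I would cite it directly rather than redo the stratification argument, and devote the written proof to explaining why applying ${}^*\mathrm{R}_{L_I}^G = (-)^{U_I}$ to $\mathrm{H}_c^i(\X,K)$ produces the displayed isomorphism, emphasizing the $F^\delta$-equivariance and the source of the Tate twist.
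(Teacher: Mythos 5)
The paper itself gives no proof of this proposition: it is stated as a direct restatement of \cite[Corollary 2.10]{Lu}, and you correctly identify this and ultimately propose to cite Lusztig as well, which matches the paper. Your geometric heuristics are broadly on target — the $\mathbb{G}_m$-bundle picture is exactly what produces the two summands (K\"unneth with $\mathrm{H}_c^1(\mathbb{G}_m)=K$ and $\mathrm{H}_c^2(\mathbb{G}_m)=K(-1)$), and the $F^\delta$-equivariance is indeed automatic — though a couple of side remarks are imprecise (the indexing by $W_I \backslash W / W_I$ is not quite the relevant stratification, $\T^{wF}$ plays no role here since one works with $\X(c)$ rather than $\Y(\dot c)$, and the two degrees $i-1$, $i-2$ come from a single $\mathbb{G}_m$ factor rather than an "iterated" fibration); these do not affect the correctness of the overall plan since you defer to Lusztig for the precise geometry.
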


By successive applications of this proposition, one can extend the previous isomorphism to the case where $I$ is not assumed to be maximal:

\centers{${}^*\mathrm{R}_{L_I}^G \big(\Hc^i(\X,K)\big) \, \simeq \, \displaystyle \bigoplus_{j \geq 0} \binom{r-r_I}{j} \Hc^{i-r+r_I-j}(\X_I,K) (-j) $}

\noindent with $r_I = |I/\phi|$ the number of $\phi$-orbits in $I$. In particular, any eigenvalue of $F^\delta$ on the cohomology of $\X_I$ is also an eigenvalue of $F^\delta$ as an endomorphism of the cohomology of $\X$. In order to rewrite the previous proposition in terms of eigenspaces, we introduce the following notation, valid for any $\phi$-stable subset $I$ of $\Delta$:

\begin{notation}\label{3not1}Let $j \in \intn{0}{h_0-1}$ and $\lambda_j = \zeta_j q^{m\delta /2}$ the corresponding eigenvalue of $F^\delta$ on $\Hc^\bullet(\X,K)$. We will denote by $Y_j^I$ the $\lambda$-eigenspace of $F^\delta$ on $\Hc^\bullet(\X_I,K)$. It is a $KL_I$-module (which can be zero) and we will denote by $\chi_j^I$ its character.
\end{notation}

\begin{rmk}\label{3rmk1}As soon as the order of $F$ on $W_I$ is also equal to $\delta$, Theorem \ref{2thm2} applies to the variety $\X_I$, even if $W_I$ is not irreductible or $\L_I$ is not semi-simple. The non-zero $Y_j^I$, which are exactly the eigenspaces of $F^\delta$ on $\Hc^\bullet(\X_I,K)$ are then mutually non-isomorphic simple $KL_I$-modules. The only non-trivial cases where the order of $F$ on $W_I$ is strictly lower than $\delta$ arise when $W_I$ is split of type A$_n$. But it that special case, the eigenvalues of a Frobenius endomorphism are $1,q,\ldots, q^n$ so that there cannot be any difference between the eigenspaces of $F$ and $F^\delta$. Consequently, the non-zero modules $Y_j^I$ are eigenspaces of $F$ and as such, are still simple and mutually non-isomorphic.
\end{rmk}

Using this notation, the previous proposition can be rephrased in terms of restriction of simple modules. Indeed, when $I$ is assumed to be maximal, the restriction of a simple unipotent $KG$-module of the principal $\ell$-block is given by
 
 \begin{equation}\label{3eq1}{}^*\mathrm{R}_{L_I}^G (Y_j) \, \simeq \, Y_j^{I} \oplus Y_{j-1}^I. \end{equation}

\bk

\noindent \thesubsection.2. \textbf{Finding the non-cuspidal simple $kG$-modules.}  Every simple $kG$-module $M$ has a projective cover, which lifts uniquely (up to isomorphism) to an indecomposable projective $\Lambda G$-module that we will denote by $P_M$. The following lemma gives the character of this module in the case where $M$ is non-cuspidal:

\begin{lem}\label{3lem1}Let $M$ be a simple $kG$-module in the principal $\ell$-block. Assume that $M$ is non-cuspidal. Then there exists a unique integer $j \in \intn{0}{h_0-1}$ with $j > m_{\zeta_j}$ such that

\centers{$ [P_M] \, = \, [Y_j] + [Y_{j-1}] \, = \, \chi_j + \chi_{j-1}.$}

\noindent Moreover, if $I$ is any maximal proper $\phi$-stable subset of $\Delta$, then the restriction ${}^*\mathrm{R}_{L_I}^G(M)$ lifts uniquely (up to isomorphism) to a $\Lambda L_I$-lattice with character $\chi_j^I$.
\end{lem}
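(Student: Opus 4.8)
The plan is to exploit the fact that $M$ non-cuspidal means ${}^*\mathrm{R}_{L_I}^G(M) \neq 0$ for some maximal proper $\phi$-stable $I \subset \Delta$, and then to transport the known semisimple behaviour over $k$ at the Levi level back up to $G$ via adjunction. First I would fix such an $I$ and note, as in Remark \ref{3rmk1}, that over $k$ the principal block of $L_I$ is (Morita equivalent to) a situation where Harish--Chandra induction from the torus is essentially all there is: the unipotent part of the principal $\ell$-block of $L_I$ has a \emph{star-shaped} Brauer tree (this is the content of Example \ref{2ex1}, applied inside the smaller group), so every simple $kL_I$-module in that block lifts uniquely to a $\Lambda L_I$-lattice, namely to one of the $Y_j^I$ (up to the exceptional contribution). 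The key input is that the $Y_j^I$ are, by Theorem \ref{2thm2} and Remark \ref{3rmk1}, mutually non-isomorphic simple $KL_I$-modules, indexed by those $j$ for which $Y_j^I \neq 0$.

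Next I would pin down which $j$ can occur. Since $M$ is non-cuspidal and lies in the principal block, $M$ is a constituent of $\mathrm{R}_{L_I}^G(N)$ for some simple $kL_I$-module $N$ in the principal block of $L_I$; equivalently ${}^*\mathrm{R}_{L_I}^G(M) \neq 0$ and every constituent of it is such an $N$. Using the decomposition-map compatibility $\dec_{L_I}\circ{}^*\mathrm{R}_{L_I}^G = {}^*\mathrm{R}_{L_I}^G\circ\dec_G$ together with the formula \eqref{3eq1}, the ordinary characters available at the $G$-level whose $\ell$-reduction can contain $\mathrm{dec}_G[M]$ are exactly the $\chi_j$ with $Y_j^I \neq 0$; and from the arrangement in Theorem \ref{2thm4} (the eigenspaces $Y_{m_\zeta},\ldots,Y_{M_\zeta}$ sitting in consecutive cohomology degrees, with $Y_j$ cuspidal iff $j = m_{\zeta_j} = M_{\zeta_j}$) one sees that $Y_j^I \neq 0$ forces $j > m_{\zeta_j}$ — the value $j = m_{\zeta_j}$ being precisely the cuspidal edge, which does not survive restriction. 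So there is at least one candidate $j$ with $j > m_{\zeta_j}$, and I would then use the theory of blocks with cyclic defect group: the character of \emph{any} indecomposable projective in the block is $\chi + \chi'$ for two adjacent vertices $\chi,\chi'$ of the Brauer tree, and $[P_M]$ restricted to $\ell$-regular classes equals $\mathrm{dec}_G[\chi_j] + (\text{exceptional part if } \chi_j \text{ touches the node})$. Matching this against the constraint that ${}^*\mathrm{R}_{L_I}^G(P_M)$ must be projective with a lift whose character is a sum of $\chi_j^I$'s (no exceptional contribution at the Levi level, since a cuspidal edge was removed), I would conclude that $[P_M] = \chi_j + \chi_{j-1}$ with this same $j$, forcing uniqueness of $j$, and that ${}^*\mathrm{R}_{L_I}^G(P_M)$ has character $\chi_j^I + \chi_{j-1}^I$ — but one of $Y_j^I$, $Y_{j-1}^I$ vanishes (again by the degree bookkeeping of Theorem \ref{2thm4}, exactly one of the two consecutive indices can fall outside the range $\intn{m_\zeta}{M_\zeta}$ once we restrict), giving that the lift of ${}^*\mathrm{R}_{L_I}^G(M)$ itself has character a single $\chi_j^I$.

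The main obstacle I anticipate is the bookkeeping that shows \emph{exactly one} of $[Y_j]$, $[Y_{j-1}]$ contributes nontrivially on the Levi side, i.e. that the head of $P_M$ really is the edge $\{j-1,j\}$ and not some other pair, and correspondingly that ${}^*\mathrm{R}_{L_I}^G(M)$ stays \emph{simple} (rather than merely being a sum of $Y_j^I$'s). Handling this cleanly requires combining \eqref{3eq1} with the fact that the $Y_j^I$ are pairwise non-isomorphic and that ${}^*\mathrm{R}_{L_I}^G$ is exact and commutes with $\ell$-reduction, so that ${}^*\mathrm{R}_{L_I}^G$ of the projective cover of $M$ is the projective cover of ${}^*\mathrm{R}_{L_I}^G(M)$ inside a block whose tree is a star — where projective covers have character (one non-exceptional vertex) $+$ (exceptional), and after deleting the cuspidal edge the exceptional part is absent; uniqueness of $j$ then follows because distinct $j$ give distinct restrictions. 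A secondary technical point is ensuring the chosen maximal $I$ exists and is $\phi$-stable and that $M$'s non-cuspidality can be witnessed there — this follows from the standard fact that a non-cuspidal module has nonzero $*\mathrm{R}$ to \emph{some} proper Levi, hence to a maximal one, and $\phi$-stability of maximal proper subsets is available since $[\Delta/\phi]$ has more than one element in all relevant types.
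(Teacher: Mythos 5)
Your overall plan — restrict to a maximal proper $\phi$-stable Levi, exploit the fact that everything is transparent at the level of $L_I$, and use the cyclic-defect structure of the block to constrain $[P_M]$ — is indeed the paper's approach. But the execution has two genuine errors that would derail the argument.

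First, the reason the restriction of $M$ behaves well at the Levi level is \emph{not} that the principal $\ell$-block of $L_I$ has a star-shaped Brauer tree. In fact there is no Brauer tree to invoke at all: because $\mathbf{L}_I$ is a proper Levi subgroup it cannot contain a Coxeter torus, so $\ell \nmid |L_I|$ and the algebra $kL_I$ is \emph{semisimple}. Example \ref{2ex1} (the star tree for $D\rtimes E$) is about a group that does have a non-trivial cyclic $\ell$-Sylow; invoking it ``inside the smaller group'' $L_I$ is a category error, since $L_I$ has trivial $\ell$-Sylow. The correct and much shorter point, which the paper uses, is that $\ell \nmid |L_I|$ forces the $\ell$-reduction of each $Y_j^I$ to be simple, so $\dec_{L_I}$ is a bijection on the relevant simples.

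Second, your closing bookkeeping is wrong. You assert that ${}^*\mathrm{R}_{L_I}^G(P_M)$ has character $\chi_j^I + \chi_{j-1}^I$ and that ``one of $Y_j^I$, $Y_{j-1}^I$ vanishes.'' Neither is correct: by Formula \eqref{3eq1} the character of ${}^*\mathrm{R}_{L_I}^G(P_M) = {}^*\mathrm{R}_{L_I}^G(Y_j) \oplus {}^*\mathrm{R}_{L_I}^G(Y_{j-1})$ is $\chi_j^I + 2\chi_{j-1}^I + \chi_{j-2}^I$, and in general both $Y_j^I$ and $Y_{j-1}^I$ are non-zero. The step you actually need — and which is not explicit in your sketch — is the following. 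Since $\chi_\mathrm{exc}$ is cuspidal (Proposition \ref{2prop2}) and ${}^*\mathrm{R}_{L_I}^G$ commutes with $\ell$-reduction, $\chi_\mathrm{exc}$ cannot contribute to $[P_M]$ when $M$ is non-cuspidal, so $[P_M] = \chi_i + \chi_j$ for two distinct $i,j$. Then $M$ is a constituent of both $\ell$-reductions, so ${}^*\mathrm{R}_{L_I}^G(M)$ (non-zero by choice of $I$) is a common constituent of ${}^*\mathrm{R}_{L_I}^G(Y_i) \simeq Y_i^I \oplus Y_{i-1}^I$ and ${}^*\mathrm{R}_{L_I}^G(Y_j) \simeq Y_j^I \oplus Y_{j-1}^I$ after $\ell$-reduction. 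Because the non-zero $Y_\bullet^I$ are mutually non-isomorphic simple $KL_I$-modules and $kL_I$ is semisimple, these two direct sums share a summand precisely when $|i-j|=1$; this forces $[P_M] = \chi_j + \chi_{j-1}$ (with $j$ the larger index, hence $j > m_{\zeta_j}$) and identifies ${}^*\mathrm{R}_{L_I}^G(M)$ with the $\ell$-reduction of the unique common summand $Y_{j-1}^I$. That adjacency argument is the crux of the lemma and needs to be spelled out.
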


\begin{proof} Recall that ${}^*\mathrm{R}_{L_I}^G$ is compatible with $\ell$-reduction, so that the composition factors of any $\ell$-reduction of a cuspidal $KG$-module are cuspidal. By Proposition \ref{2prop2}, $\chi_\mathrm{exc}$ is a cuspidal character and as such cannot be a component of the character of $P_M$. From the general theory of blocks with cyclic defect groups (see Section \ref{2se3}) we deduce that there exist two distinct integers $i,j \in \intn{0}{h_0-1}$ such that

\centers{$ [P_M] \, = \, [Y_j] + [Y_i].$}

\noindent In other words, $M$ is a composition factors of the $\ell$-reductions of both $Y_j$ and $Y_i$.

\sk

Let $I$ be a maximal proper $\phi$-stable subset of $\Delta$, such that the restriction ${}^*\mathrm{R}_{L_I}^G(M)$ is non-zero (such a subset always exists since $M$ is non-cuspidal). By the various properties of the restriction functors (listed at the beginning of the section), the composition factors of ${}^*\mathrm{R}_{L_I}^G(M)$ occur as composition factors of the $\ell$-reductions of both  ${}^* \mathrm{R}_{L_I}^G(Y_j)$ and ${}^* \mathrm{R}_{L_I}^G(Y_i)$. Besides, by Formula \ref{3eq1}, these modules decomposes into

\leftcenters{and}{$ \begin{array}[b]{r@{\, \ \simeq \ \, }l}{}^* \mathrm{R}_{L_I}^G(Y_j) &  Y_j^I \oplus Y_{j-1}^I
\\[5pt] {}^*\mathrm{R}_{L_I}^G(Y_i)&  Y_i^I \oplus Y_{i-1}^I. \end{array}$}

\noindent Since $\L_I$ is a proper Levi subgroup of $\G$, the associated finite group has order prime to $\ell$ (otherwise $\L_I$ would contain a Coxeter torus, which is impossible by the results in Section \ref{2se1}). Therefore, the $KL_I$-modules remain simple after $\ell$-reduction so that the modules $ {}^*\mathrm{R}_{L_I}^G(Y_i)$ and $ {}^*\mathrm{R}_{L_I}^G(Y_j)$ must have a common irreducible component. By Remark \ref{3rmk1}, this forces  $|i-j|=1$, $\zeta_i=\zeta_j$ and ${}^* \mathrm{R}_{L_I}^G(M)$ to be a simple $kL_I$-module, isomorphic to any $\ell$-reduction of $Y_{\mathrm{min}(i,j)}^I$. 
\end{proof}

\begin{notation}\label{3not2}Up to isomorphism, for any $j$ there exists at most one module $M$ satisfying the properties of the previous lemma. Such a module will be denoted by $S_j$. If it does not exist or if it is cuspidal, we will set $S_j = \{0\}$ by convention.
\end{notation}

With this notation, Lemma \ref{3lem1} asserts that the non-cuspidal composition factors of any $\ell$-reduction of $Y_j$ are isomorphic to $S_j$ or $S_{j+1}$. This gives the non-cuspidal part of the Brauer tree. It remains to determine whether $S_j$ is always non-zero:

\begin{lem}\label{3lem2}Let $j \in \intn{0}{h_0-1}$ and assume  that $j > m_{\zeta_j}$. Then $S_j$ is non-zero and occurs with multiplicity one as a composition factor of the $\ell$-reductions of both $Y_j$ and $Y_{j-1}$.
\end{lem}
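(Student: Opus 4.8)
The plan is to realise the indecomposable projective $\Lambda G$-module $P_{S_j}$ explicitly, by Harish-Chandra induction from a proper Levi subgroup, and then to read off from its character both that $S_j\neq 0$ and the multiplicities. Since $j>m_{\zeta_j}$ we have $\zeta_{j-1}=\zeta_j=:\zeta$, and the Harish-Chandra series labelled by $\zeta$ contains the consecutive modules $Y_{m_\zeta},\ldots,Y_{M_\zeta}$ with $m_\zeta\le j-1<j\le M_\zeta$; in particular $M_\zeta>m_\zeta$, so by Theorem \ref{2thm4} neither $Y_{j-1}$ nor $Y_j$ is cuspidal. The first — and main — step is to produce a maximal proper $\phi$-stable subset $I\subsetneq\Delta$ with $Y_{j-1}^I\neq 0$ (Notation \ref{3not1}). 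Iterating Formula \ref{3eq1} (equivalently Proposition \ref{3prop1}), $Y_{j-1}^I\neq 0$ precisely when the eigenvalue $\lambda_{j-1}$ of $F^\delta$ occurs on $\Hc^\bullet(\X_I,K)$; since $j-1$ is not the top index $M_\zeta$ of its series, Lusztig's explicit description of the cohomology of $\X$ in \cite{Lu} — the bottom of the series being Harish-Chandra induced from a cuspidal unipotent pair, together with the arrangement of Theorem \ref{2thm4} — shows that such an $I$ can be chosen, by retaining a simple root lying in a fixed cuspidal support. Verifying this uniformly, through Lusztig's tables or the recursive structure of his results, is the delicate point of the proof.

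Granting this, let $\widetilde Y_{j-1}^I$ be a $\Lambda L_I$-lattice lifting $Y_{j-1}^I$, of character $\chi_{j-1}^I$. Because $\L_I$ is a proper Levi subgroup, its finite group has order prime to $\ell$ (Section \ref{2se1}), so $\widetilde Y_{j-1}^I$ is projective, and hence $P:=\mathrm{R}_{L_I}^G\big(\widetilde Y_{j-1}^I\big)$ is a projective $\Lambda G$-module. By adjunction and Formula \ref{3eq1}, for every $k$ one has $\langle [P],\chi_k\rangle_G=\langle\chi_{j-1}^I,\chi_k^I+\chi_{k-1}^I\rangle_{L_I}$, which by Remark \ref{3rmk1} — the non-zero $\chi_m^I$ being pairwise distinct irreducible characters — equals $1$ if $k\in\{j-1,j\}$ and $0$ otherwise; moreover $\langle[P],\chi_\mathrm{exc}\rangle_G=0$ since the constituents of $\chi_\mathrm{exc}$ are cuspidal (Proposition \ref{2prop2}). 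Hence $[P]=\chi_{j-1}+\chi_j$.

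In a block with cyclic defect group a projective module whose character is a sum of two ordinary characters is indecomposable, and those characters are joined by an edge of the Brauer tree; so $P=P_M$ for $M$ its simple head. Then $M$ is non-cuspidal: it is a quotient of $\overline P=\mathrm{R}_{L_I}^G\big(\overline{Y_{j-1}^I}\big)$, whence by adjunction ${}^*\mathrm{R}_{L_I}^G(M)$ contains the non-zero simple module $\overline{Y_{j-1}^I}$. Therefore Lemma \ref{3lem1} applies to $M$ and yields a unique integer $j'>m_{\zeta_{j'}}$ with $[P_M]=\chi_{j'}+\chi_{j'-1}$; comparing with the previous paragraph forces $j'=j$, and the last assertion of Lemma \ref{3lem1} shows that ${}^*\mathrm{R}_{L_{I'}}^G(M)$ lifts to a lattice of character $\chi_j^{I'}$ for every maximal proper $\phi$-stable $I'$. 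Thus $M$ satisfies every condition characterising $S_j$ (Notation \ref{3not2}), so $S_j=M\neq 0$. Finally $\langle[P_M],\chi_j\rangle_G=\langle[P_M],\chi_{j-1}\rangle_G=1$ is, by Brauer reciprocity, exactly the statement that $S_j$ occurs with multiplicity one in the $\ell$-reduction of each of $Y_j$ and $Y_{j-1}$. The only genuine obstacle is the choice of $I$ in the first step; everything afterwards is formal once Lemma \ref{3lem1} is at hand.
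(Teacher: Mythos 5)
Your strategy---construct $P_{S_j}$ explicitly as a Harish--Chandra induced projective $\mathrm{R}_{L_I}^G(\widetilde Y_{j-1}^I)$ and read off multiplicities by Brauer reciprocity---is a genuinely different route from the paper's, which never builds the projective but instead computes the multiplicities directly from the identity $\dec_{L_I}\big({}^*\mathrm{R}_{L_I}^G[Y_j]\big)=a\,[{}^*\mathrm{R}_{L_I}^G(S_j)]_k+b\,[{}^*\mathrm{R}_{L_I}^G(S_{j+1})]_k$ furnished by Lemma~\ref{3lem1}. Once the existence of a suitable $I$ is granted, the two proofs are both correct and roughly comparable in length; yours has the merit of also exhibiting the projective cover as an induced module (an observation of independent interest), while the paper's is more economical in its use of the machinery.

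However, the step you yourself flag as delicate is a genuine gap, and it is precisely the point that the paper handles by a trick you did not find. You try to deduce the existence of a maximal proper $\phi$-stable $I$ with $Y_{j-1}^I\neq 0$ from the position of $j-1$ in its Harish--Chandra series and Lusztig's ``recursive structure,'' but you give no argument and concede that verifying it ``uniformly'' is the hard part. The paper sidesteps this entirely: it uses the non-cuspidality of $Y_j$ (not of $Y_{j-1}$) to choose $I$ with ${}^*\mathrm{R}_{L_I}^G(Y_j)\neq 0$, hence $Y_j^I\neq 0$ or $Y_{j-1}^I\neq 0$ by Formula~\eqref{3eq1}; and then observes that if $Y_j^I\neq 0$, Proposition~\ref{3prop1} places it in degree $i-1>r-1=\dim\X_I$, so the arrangement of Theorem~\ref{2thm4} applied to $\X_I$ (legitimate by Remark~\ref{3rmk1}) forces $Y_{j-1}^I\neq 0$ as well. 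This dimension argument is exactly what is missing from your Step~1 and cannot be replaced by an appeal to tables without actually carrying out a case-by-case check.

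There is also a small inaccuracy worth fixing: the character identity you compute only accounts for the scalar products of $[P]$ against $\chi_{j-1},\chi_j$ and $\chi_{\mathrm{exc}}$, but $\mathrm{R}_{L_I}^G(\widetilde Y_{j-1}^I)$ will in general also have unipotent constituents lying outside the principal $\ell$-block, so $[P]$ need not equal $\chi_{j-1}+\chi_j$ on the nose. One should pass to $bP$ before invoking indecomposability in the cyclic block; the computation you did shows $[bP]=\chi_{j-1}+\chi_j$, and the rest of the argument then goes through.
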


\begin{proof} Our assumption on $j$ implies that $Y_j$ occurs as an eigenspace of $F^\delta$ on some cohomology group $\Hc^i(\X,K)$ of degree  $i> r$. It is also non-cuspidal, and we can choose a maximal proper $\phi$-stable subset $I$ of simple roots such that the restriction ${}^*\mathrm{R}_{L_I}^G\big(Y_j)$ is non-zero. By Formula \ref{3eq1}, this forces one of the modules between $Y_j^I$ and $Y_{j-1}^I$ to be non-zero. The latter is actually always non-zero: indeed, by Proposition \ref{3prop1}, the module $Y_{j}^I$ is an eigenspace of $F^\delta$ on $\Hc^{i-1}(\X_I,K)$ with $i-1 > r-1$. But the Coxeter variety $\X_I$ has dimension $r-1$, and Theorem \ref{2thm4} ensures that $Y_{j-1}^I$ is non-zero as soon as $Y_j^I$ is.

\sk

Denote by $a$ (resp. $b$)  the multiplicity of $S_j$ (resp. $S_{j+1}$) in the composition series of the $\ell$-reductions of $Y_j$ (by convention, the multiplicity is set to zero if the module $S_m$ is zero). By the previous lemma, these modules are the only possible non-cuspidal composition factors. Since the restriction functor is compatible with the decomposition maps, we can write

\centers{$ \dec_{L_I}\big(\big[{}^* \mathrm{R}_{L_I}^G (Y_j)\big]\big) \, = \, a\big[{}^* \mathrm{R}_{L_I}^G (S_j)\big]_k + b \big[{}^* \mathrm{R}_{L_I}^G (S_{j+1})\big]_k$}

\noindent in $K_0(kG$-$\mathrm{mod})$. Together with Formula \ref{3eq1}, it becomes

\centers{$ \dec_{L_I}\big(\big[Y_j^I\big]\big) + \dec_{L_I}\big(\big[Y_{j-1}^I\big]\big) \, = \, a\big[{}^* \mathrm{R}_{L_I}^G (S_j)\big]_k + b \big[{}^* \mathrm{R}_{L_I}^G (S_{j+1})\big]_k$}

\noindent But by Lemma \ref{3lem1}, we know that the character of the restriction of $S_m$ is either zero or equal to $\dec_{L_I}\big(\big[Y_{m-1}^I\big]\big)$. Since the latter is non-zero for $m=j$, the previous equality forces ${}^* \mathrm{R}_{L_I}^G (S_{j})\neq \{0\}$ and $a=1$. In particular, $S_j$ is a simple $kG$-module which occurs with multiplicity one in any $\ell$-reduction of $Y_j$. The same argument applies to the equation

\centers{$ \dec_{L_I}\big(\big[{}^* \mathrm{R}_{L_I}^G (Y_{j-1})\big]\big)  \, = \, a'\big[{}^* \mathrm{R}_{L_I}^G (S_{j-1})\big]_k + b' \big[{}^* \mathrm{R}_{L_I}^G (S_{j})\big]_k.$}

\noindent It shows that $S_j$ occurs also as a composition factor with multiplicity $b'=1$ in any $\ell$-reduction of $Y_{j-1}$.\end{proof}

In particular, when $j$ is not equal to $m_{\zeta_j}$, the sum $\chi_j + \chi_{j-1}$ is always the character of an indecomposable projective $\Lambda G$-module. We will denote this module by $P_j$.

\begin{cons}From the two previous lemmas we deduce that the lines

\centers{ \begin{pspicture}(10,1.4)
  \psset{linewidth=1pt}

  \cnode(0,0.6){5pt}{A}
  \cnode(2,0.6){5pt}{B}
  \cnode(4,0.6){5pt}{C}
  \cnode(8,0.6){5pt}{D}
  \cnode(10,0.6){5pt}{E}
  \ncline{A}{B}  \naput[npos=-0.1]{$\vphantom{\Big(}\chi_{m_\zeta}$} \naput[npos=1.1]{$\vphantom{\Big(}\chi_{m_\zeta+1}$}\nbput{$P_{m_\zeta+1}$}
  \ncline{B}{C} \nbput{$P_{m_\zeta+2}$}
  \ncline[linestyle=dashed]{C}{D}
  \ncline{D}{E}  \naput[npos=-0.1]{$\vphantom{\Big(}\chi_{M_\zeta-1}$} \naput[npos=1.1]{$\vphantom{\Big(}\chi_{M_\zeta}$}\nbput{$P_{M_\zeta}$}
\end{pspicture}}

\noindent are subtrees of the Brauer tree  $\Gamma$ of the principal  $\ell$-block of $G$. Moreover, the missing edges are labeled by the simple cuspidal $kG$-modules.
\end{cons}

\subsection{Cuspidal \texorpdfstring{$kG$}{kG}-modules\label{3se2}}

The most delicate step in the proof of the conjecture of Hiss-L\"ubeck-Malle consists in gluing the non-cuspidal branches to the exceptional node. If the conjecture holds, then by the previous work the missing edges are labeled by the simple cuspidal $kG$-modules, or equivalently by their projective cover. The character of the latter should therefore be given by $\chi_\mathrm{exc} + \chi_{\zeta_m}$.  We present here an explicit construction of these projective modules using the complex $\Rgc(\Y(\dot c),\Lambda)$ representing the cohomology of $\Y(\dot c)$. By the results in Section \ref{1se2} we know that this complex is perfect and thus provides a bunch of projective modules. Unfortunately, these are in general too big to be computed explicitly, and we need to factor them out according to the action of $F^\delta$. This can be achieved with the help of the tools introduced at the end of Section \ref{1se2}. However, to make this operation work perfectly, we will make the following assumption:

\centers{\begin{tabular}{cp{13cm}} \hskip-1mm $\mathbf{(W)}$ &  For all integer $j \in \intn{0}{h_0-1}$ such that $j = m_{\zeta_j}$, the generalized $(\lambda_j)$-eigenspace of $F^\delta$ on $b\Hc^\bullet(\Y(\dot c),\Lambda)$ is torsion-free. \end{tabular}}

\noindent By Theorem \ref{2thm4}, these eigenvalues correspond exactly to the eigenvalues of $F^\delta$ on the cohomology group in middle degree $\Hc^r(\X,K)$.

\sk

Under this assumption, the missing projective modules turn out to be homotopy equivalent to suitably chosen generalized eigenspaces
of $F^\delta$ on the complex $C = b\Rgc(\Y(\dot c),\Lambda)$. We shall rather work with a particular representative of this complex, with good finiteness properties that are needed to apply the constructions detailed in Section \ref{1se2}. 

\sk

By Corollary \ref{1cor1}, there exists a bounded complex $\mathscr{C}$ of finitely generated $(\Lambda G,$ $\Lambda \T^{cF})$-bimodules, projective as left and right modules, together with an equivariant map $f : C \longrightarrow \mathscr{C}$ and its inverse $g : \mathscr{C} \longrightarrow C$  in the category $K^b(\Lambda G \times (\T^{cF})^\mathrm{op})$. The Frobenius $F^\delta$ induces a morphism on $\mathscr{C}$ which we will denote by  $\mathscr{F} = f \circ F^{\delta} \circ g$.

\begin{prop}\label{3prop2}Let $j \in \intn{0}{h_0-1}$ such that $j = m_{\zeta_j}$ and $\lambda_j$ the associated eigenvalue of $F^\delta$. Assume that $\lambda_j$ satisfies the assumption $\mathrm{(W)}$. Then the complex $\mathscr{C}_{(\lambda_j)}$ is homotopy equivalent to an indecomposable projective module $P$ concentrated in degree $r$, and its character is given by

\centers{$ [P] \, = \chi_\mathrm{exc} + \chi_{m_\zeta}.$}

\end{prop}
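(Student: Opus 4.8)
The plan is to analyze the complex $\mathscr{C}_{(\lambda_j)}$ in two stages: first pin down its cohomology, then use the torsion-free hypothesis together with the concentration lemmas of Section~\ref{1se1} to collapse it to a single projective module. For the cohomology, recall that by the exactness of the $(\lambda)$-eigenspace functor we have $\mathrm{H}^\bullet(\mathscr{C}_{(\lambda_j)}) \simeq b\Hc^\bullet(\Y(\dot c),\Lambda)_{(\lambda_j)}$, where the eigenspace is taken with respect to $F^\delta$ (via $\mathscr F$, which agrees with $F^\delta$ on cohomology). Passing to $K$ and using Proposition~\ref{1prop1}(i), this becomes the sum of the genuine generalized $\mu$-eigenspaces of $F^\delta$ on $b\Hc^\bullet(\Y(\dot c),K)$ over all eigenvalues $\mu \equiv \lambda_j \bmod \ell$. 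First I would identify these contributions: the unipotent part comes from $\Hc^\bullet(\X(c),K)$, where by Notation~\ref{2not2} and Fact~\ref{2fact1} the unique eigenvalue congruent to $q^{j\delta}$ is $\lambda_j$ itself, contributing $Y_j = Y_{m_\zeta}$, which by Theorem~\ref{2thm4} sits in degree $r$ (since $j = m_{\zeta_j}$). The non-unipotent part comes from the $\theta$-isotypic pieces $Y_\theta$ for non-trivial $\ell$-characters $\theta$; these all live in degree $r = \ell(c)$ by \cite[Corollary 9.9]{DeLu}, and the action of $F^\delta$ permutes them within $C_W(c\sigma)$-orbits as recalled before Notation~\ref{2not1}. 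One checks that the eigenvalues of $F^\delta$ on $\bigoplus_\theta Y_\theta$ are, up to the chosen square root normalization, the $h_0$-th roots of unity times a power of $q$, so exactly one $C_W(c\sigma)$-orbit's worth of them reduces to $q^{j\delta}$; summing over that orbit gives precisely $\chi_{\mathrm{exc}}$. Hence $\mathrm{H}^\bullet(\mathscr{C}_{(\lambda_j)})$ is concentrated in degree $r$ with character $\chi_{m_\zeta} + \chi_{\mathrm{exc}}$ after extending scalars to $K$.

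Next I would feed this into Lemma~\ref{1lem1}. The complex $\mathscr{C}_{(\lambda_j)}$ lies in $K^b(\Lambda G\text{-proj})$ since $\mathscr C$ has projective terms and the eigenspace functor preserves this. Its cohomology vanishes outside degree $r$, so Lemma~\ref{1lem1} gives a homotopy-equivalent representative concentrated in degrees $r-1, r$; and by part (b) of that lemma, the term $P_{r-1}$ can be taken to be zero precisely because $\mathrm{H}^r(\mathscr{C}_{(\lambda_j)}) = b\Hc^r(\Y(\dot c),\Lambda)_{(\lambda_j)}$ is torsion-free — this is exactly assumption $\mathrm{(W)}$ for the eigenvalue $\lambda_j$. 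So $\mathscr{C}_{(\lambda_j)}$ is homotopy equivalent to a single finitely generated projective $\Lambda G$-module $P$ in degree $r$. Its character is read off from the cohomology after scalar extension: $[P] = [\mathrm{H}^r(K\mathscr{C}_{(\lambda_j)})] = \chi_{m_\zeta} + \chi_{\mathrm{exc}}$.

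Finally, indecomposability: from the theory of blocks with cyclic defect groups (Section~\ref{2se3}), a projective module whose character is a sum of two distinct elements of $\mathscr V$ is automatically indecomposable — any projective splitting off $P$ would have character a proper nonempty sub-sum of $\{\chi_{m_\zeta},\chi_{\mathrm{exc}}\}$, but no single element of $\mathscr V$ is the character of a projective module in a block with cyclic defect group. So $P$ is indecomposable, completing the proof.

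The main obstacle I expect is the bookkeeping in the cohomology computation — specifically, verifying cleanly that the $F^\delta$-eigenvalues on the direct sum of the non-unipotent pieces $\bigoplus_\theta Y_\theta$ reduce mod $\ell$ exactly to the $h_0$-th roots of unity, with the single $C_W(c\sigma)$-orbit congruent to $q^{j\delta}$ assembling to $\chi_{\mathrm{exc}}$, and that no extra unipotent eigenvalue besides $\lambda_j$ reduces to $q^{j\delta}$ (this last point is Fact~\ref{2fact1}). The normalization of the square root of $q^\delta$ and the identification of the $C_W(c\sigma)$-action with multiplication by $q^\delta$ on $\T^{cF}$, as discussed around Example~\ref{2ex2}, are what make this work, but they need to be invoked carefully.
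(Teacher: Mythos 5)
Your proof is structurally the same as the paper's on the key steps: use the exactness of the eigenspace functor to compute the cohomology of $\mathscr{C}_{(\lambda_j)}$, invoke Fact~\ref{2fact1} and Theorem~\ref{2thm4} to see the $K$-cohomology is concentrated in degree $r$, then feed assumption $\mathrm{(W)}$ into Lemma~\ref{1lem1}(b) to collapse the complex to a single projective in degree $r$. That part is correct and is exactly how the paper argues.

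Where you diverge is the identification of the character $[P]$. You try to compute the non-unipotent contribution head-on, by analyzing the eigenvalues of $F^\delta$ on $\bigoplus_\theta Y_\theta$ and arguing that the pieces congruent to $q^{j\delta}$ assemble to $\chi_{\mathrm{exc}}$. This is plausible but it is the hardest and fuzziest part of your sketch: $F^\delta$ permutes the $\theta$-isotypic components cyclically within each $C_W(c\sigma)$-orbit, so pinning down the exact generalized eigenspaces requires understanding $(F^\delta)^{h_0} = F^h$ on a fixed isotypic piece, a point you acknowledge needs care. The paper sidesteps this entirely. It computes only the unipotent part: $[P]$ has $\chi_{m_\zeta}$ as its unique unipotent constituent (with multiplicity one), by Theorem~\ref{2thm4} plus Fact~\ref{2fact1}. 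Then it exploits the cyclic-defect structure of the block: the character of any projective module in the block is a nonnegative sum of $\chi + \chi'$ with $\chi,\chi'$ distinct elements of $\mathscr{V}$, and since $\chi_{\mathrm{exc}} + \chi_{\mathrm{exc}}$ is excluded, every indecomposable projective contributes at least one unipotent character. Having $\chi_{m_\zeta}$ appear with multiplicity exactly one and no other unipotent at all therefore forces $[P] = \chi_{\mathrm{exc}} + \chi_{m_\zeta}$ and $P$ indecomposable, in one stroke. So the block-theoretic constraint does the work that you instead attempt by direct eigenvalue bookkeeping; the paper's route is shorter and avoids the part of your argument that would need to be fleshed out.
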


\begin{proof} For the sake of simplicity we will write $\lambda = \lambda_j$ and $\zeta = \zeta_j$. From Proposition \ref{1prop1} we deduce that the complex $\mathscr{C}_{(\lambda)}$ has the following properties:

\begin{itemize}

\item $\mathscr{C}_{(\lambda)}$ is a bounded complex of finitely generated projective $\Lambda G$-modules;

\item the cohomology of the complex  $\mathscr{C}_{(\lambda)} \otimes_\Lambda k$ is concentrated in degrees $r, \ldots, 2r$ (it is already the case for $\mathscr{C}\otimes_\Lambda k$ since $\Y(\dot c)$ is an irreducible affine variety);

\item the cohomology groups of the complex $\mathscr{C}_{(\lambda)} \otimes_\lambda K$ vanish outside the degree $r$. This follows indeed from \cite[Corollary 9.9]{DeLu}, Theorem \ref{2thm4} and Fact \ref{2fact1}:

\centers{$\Hc^i(\mathscr{C}_{(\lambda)} \otimes_\Lambda K) \ \mathop{=}\limits^{\text{\ref{1prop1}}} \ b\Hc^i(\Y(\dot c),K)_{(\lambda)} \ \mathop{=}\limits^{\text{\cite{DeLu}}}  \ \Hc^i(\X,K)_{(\lambda)} \ \mathop{=}\limits^{\text{\ref{2fact1}}} \  \Hc^i(\X,K)_{\lambda} \ \mathop{=}\limits^{\text{\ref{2thm4}}}  \ 0.$}

\end{itemize}

\noindent Since the assumption $\mathrm{(W)}$ ensures that the groups $\Hc^i(\mathscr{C}_{(\lambda)} )$ are torsion-free, they are in fact zero for $i \neq r$. Therefore, by Lemma \ref{1lem1}, the complex $\mathscr{C}_{(\lambda)}$ can be represented up to homotopy by a projective module $P = \mathrm{H}^r(\mathscr{C}_{(\lambda)})$ concentrated in degree $r$. The character of this module corresponds (up to a sign) to the total character of the complex and is given by

\centers{$ [P] \, = \, \displaystyle \sum_{i=r}^{2r} (-1)^{i-r} \big[\mathscr{C}_{(\lambda)}^i\big] \, = \, \big[b\Hc^r(\Y(\dot c),K)_{(\lambda)}\big].$} 

\noindent Moreover, by Theorem \ref{2thm4}, this character has only one non-exceptional (i.e. unipotent) irreducible component, namely $\chi_{m_\zeta}$. Consequently, the only possible choice for the character of $P$ is $\chi_{\mathrm{exc}} + \chi_{m_\zeta}$. \end{proof}

In view of the results in the previous section, we have constructed the projective covers of the simple cuspidal $kG$-modules in the block. Following the previous notation, these simple modules will be denoted by $S_{m_{\zeta}}$ and their cover by $P_{m_{\zeta}}$. They label the edges of the Brauer tree connecting the non-cuspidal branches (or equivalently, the connected components of $\Gamma^\bullet$) to the exceptional node. This gives exactly the first part of the conjecture of Hiss, L\"ubeck and Malle:

\begin{cor}\label{3cor1}Under the assumption $\mathrm{(W)}$, Conjecture \hyperref[2conj1]{\emph{(HLM)}} holds.
\end{cor}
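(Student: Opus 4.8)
The statement is a formal consequence of the work done in Sections~\ref{3se1} and~\ref{3se2}, so the plan is simply to assemble those pieces and to check that together they account for the whole Brauer tree. I would begin by recalling that $\Gamma$, having vertex set $\mathscr{V}$ of cardinality $h_0+1$, is a tree with exactly $h_0$ edges, each of which is labelled by a simple $kG$-module in the block (equivalently, by its projective cover).

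Next I would list the edges. From the Consequences stated at the end of Section~\ref{3se1}, for every $j\in\intn{0}{h_0-1}$ with $j>m_{\zeta_j}$ the character $\chi_j+\chi_{j-1}$ is carried by the indecomposable projective $P_j$, so $\{\chi_{j-1},\chi_j\}$ is an edge of $\Gamma$; by Theorem~\ref{2thm4} the indices sharing a given value $\zeta$ of $\zeta_j$ form the interval $\intn{m_\zeta}{M_\zeta}$, and consequently these edges glue into a path $\chi_{m_\zeta}-\chi_{m_\zeta+1}-\cdots-\chi_{M_\zeta}$, one for each root of unity $\zeta$ that occurs, none of them meeting $\chi_\mathrm{exc}$. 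Then I would invoke Proposition~\ref{3prop2} --- the only place assumption~$\mathrm{(W)}$ enters --- to produce, for each $j=m_{\zeta_j}$, an indecomposable projective module with character $\chi_\mathrm{exc}+\chi_{m_\zeta}$, namely the projective cover of the cuspidal simple $S_{m_\zeta}$, giving an edge $\{\chi_\mathrm{exc},\chi_{m_\zeta}\}$ of $\Gamma$.

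To finish I would count. If $N$ denotes the number of distinct roots of unity among the $\zeta_j$, then there are $N$ indices with $j=m_{\zeta_j}$ and $h_0-N$ with $j>m_{\zeta_j}$, so the two previous steps exhibit $(h_0-N)+N=h_0$ edges. They are pairwise distinct: the $N$ cuspidal ones are exactly the edges incident to $\chi_\mathrm{exc}$, while the $h_0-N$ non-cuspidal ones have pairwise distinct endpoints. Since $\Gamma$ has exactly $h_0$ edges, this is all of them, so $\Gamma$ is precisely the graph obtained by attaching the $N$ paths to the single vertex $\chi_\mathrm{exc}$ at their endpoints $\chi_{m_\zeta}$. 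Deleting $\chi_\mathrm{exc}$ together with the $N$ cuspidal edges then leaves $\Gamma^\bullet$ with connected components the $N$ paths, one per Harish-Chandra series (equivalently, per root of unity $\zeta$), and with $\chi_{m_\zeta}$ the only vertex of each attached to the exceptional node --- that is, exactly assertions (i), (ii) and (iii) of Conjecture~\hyperref[2conj1]{(HLM)}.

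There is no serious obstacle at this stage: all the substance lies in Proposition~\ref{3prop2} (and hence in assumption~$\mathrm{(W)}$) and in the results of Section~\ref{3se1}. The only point needing a little attention is the gluing in the second step: one must use Theorem~\ref{2thm4} to know that, for a fixed $\zeta$, the edge $\{\chi_{j-1},\chi_j\}$ stays inside the interval $\intn{m_\zeta}{M_\zeta}$ (so that the non-cuspidal edges never jump between distinct Harish-Chandra series) and that every index of that interval is reached, so the resulting path is unbroken.
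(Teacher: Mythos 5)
Your proposal is correct and follows the same route as the paper: the only place assumption~$\mathrm{(W)}$ enters is Proposition~\ref{3prop2}, which produces the projective covers $P_{m_\zeta}$ of the cuspidal simples with character $\chi_\mathrm{exc}+\chi_{m_\zeta}$, and this glues the non-cuspidal paths obtained in Section~\ref{3se1} to the exceptional node exactly as in the statement of the conjecture. The explicit edge-count ($N$ cuspidal edges plus $h_0-N$ non-cuspidal ones equals $h_0$, the total number of edges of a tree on $h_0+1$ vertices) is a welcome clarification of what the paper leaves implicit, but the substance of the argument is identical.
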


\subsection{Some numerical applications\label{3se3}}

We conclude this section by recording two direct consequences of the previous study, always under the assumption that $\mathrm{(W)}$ holds. 

\begin{prop} The simple unipotent cuspidal $KG$-modules in the principal $\ell$-block remain simple after $\ell$-reduction. 
\end{prop}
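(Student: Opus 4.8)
The plan is to show that the $\ell$-reduction of each unipotent cuspidal $KG$-module $Y_{m_\zeta}$ in the principal block is irreducible, by combining the description of its projective cover obtained in Proposition~\ref{3prop2} with the fact that, in a block with cyclic defect group, a simple $KG$-module whose $\ell$-reduction has exactly one composition factor (counted without multiplicity) must in fact be irreducible after reduction. Concretely, fix $j$ with $j = m_{\zeta_j}$, write $\zeta = \zeta_j$, and recall from Proposition~\ref{3prop2} that under $\mathrm{(W)}$ there is an indecomposable projective $P = P_{m_\zeta}$ with $[P] = \chi_{\mathrm{exc}} + \chi_{m_\zeta}$. The edge of the Brauer tree labelled by $P$ joins the exceptional node to the vertex $\chi_{m_\zeta}$, and the simple module $S_{m_\zeta}$ sitting on that edge is cuspidal (it is the head of $P$).

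First I would record the general principle for blocks with cyclic defect: if $\chi$ is an ordinary irreducible character in such a block and $S$, $S'$ are the two simple modules labelling the two edges of the Brauer tree incident to the vertex $\chi$, then the decomposition map sends $[\chi]$ to $[S] + [S']$ in $K_0(kG\text{-}\mathrm{mod})$; moreover a reduction of $\chi$ is \emph{uniserial} with composition factors $S$ and $S'$ in some order. In particular, if the vertex $\chi$ is a \emph{leaf} of the tree, then only one edge is incident to it, and the reduction of $\chi$ is simple. So the statement will follow once I verify that each vertex $\chi_{m_\zeta}$ (for $\zeta$ a root of unity such that the corresponding Harish-Chandra series meets the block) is a leaf of the Brauer tree $\Gamma$.

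This is exactly what the analysis of Sections~\ref{3se1} and~\ref{3se2} delivers. By the Consequences following Lemma~\ref{3lem2}, the only edges of $\Gamma$ incident to $\chi_{m_\zeta}$ are: the edge labelled $P_{m_\zeta+1}$ (with character $\chi_{m_\zeta} + \chi_{m_\zeta+1}$), present precisely when $M_\zeta > m_\zeta$, i.e.\ when the $\zeta$-component of $\Gamma^\bullet$ has more than one vertex; and the edge labelled $P_{m_\zeta}$ (with character $\chi_{\mathrm{exc}} + \chi_{m_\zeta}$) joining $\chi_{m_\zeta}$ to the exceptional node, constructed in Proposition~\ref{3prop2}. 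Thus the vertex $\chi_{m_\zeta}$ has degree $2$ in general, \emph{not} $1$ — so I must be more careful: it is a leaf only when $M_\zeta = m_\zeta$, i.e.\ when $Y_{m_\zeta}$ is itself cuspidal as a $KG$-module. But the statement of the Proposition restricts to \emph{cuspidal} unipotent $KG$-modules, which by Theorem~\ref{2thm4} are precisely the $Y_j$ with $j = m_{\zeta_j} = M_{\zeta_j}$. For such $j$ the $\zeta_j$-component of $\Gamma^\bullet$ is a single vertex, so no edge $P_{m_\zeta+1}$ exists, and $\chi_{m_\zeta}$ is a genuine leaf of $\Gamma$, attached only to the exceptional node via $P_{m_\zeta}$. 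Hence a reduction of $Y_{m_\zeta}$ is uniserial with a single composition factor $S_{m_\zeta}$, i.e.\ simple (and equal to $S_{m_\zeta}$).

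I expect the main subtlety to be purely bookkeeping: correctly matching "cuspidal unipotent $KG$-module in the block" with the leaf condition $m_\zeta = M_\zeta$ via Theorem~\ref{2thm4}, and invoking the cyclic-defect structure theory in the right form (that the reduction of a leaf character is simple, with no multiplicities to worry about). No new geometric input beyond Proposition~\ref{3prop2} is needed; everything rests on the shape of $\Gamma$ already determined under $\mathrm{(W)}$ together with standard Brauer-tree combinatorics, so there is no serious obstacle once the leaf identification is made precise.
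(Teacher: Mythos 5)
Your proof is correct and follows essentially the same route as the paper's: identify the cuspidal unipotent $KG$-modules in the block as the $Y_{m_\zeta}$ with $m_\zeta = M_\zeta$ via Theorem~\ref{2thm4}, observe that the corresponding vertices are leaves of the Brauer tree $\Gamma$ by the shape established in Conjecture~\hyperref[2conj1]{(HLM)}, and conclude by the standard cyclic-defect fact that a leaf character reduces to the simple module labelling its unique incident edge. Your write-up merely makes explicit the brief detour through the degree-$2$ possibility before restricting to the cuspidal case, which the paper elides.
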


\begin{proof} By Theorem \ref{2thm4}, the simple unipotent cuspidal $KG$-modules in the block correspond to the modules $Y_{m_\zeta}$ such that  $m_{\zeta} = M_{\zeta}$. According to Conjecture \hyperref[2conj1]{{(HLM)}},  the associated nodes in the Brauer tree are extremities of the branches, so that any $\ell$-reduction of $Y_{m_\zeta}$ is isomorphic to the simple $kG$-module $S_{m_\zeta}$. 
\end{proof}

In Lusztig's classification, the irreducible unipotent characters fall into families \cite{Lu5}. By a case-by-case analysis, it has been checked that Lusztig's $a$-function (defined as the valuation of the degree of the unipotent character as a polynomial in the variable $q$) is constant on each family $\mathcal{F}$. This value will be denoted by $a(\mathcal{F})$. Similarly, the degree $A_\chi$ of the polynomial degree of any unipotent character $\chi$ depends only on the family.

\begin{prop} Let $\mathcalm{F}_1$, $\mathcal{F}_2$, \ldots, $\mathcal{F}_m$ be the $F$-stable families of unipotent characters lying in the principal $\ell$-block, ordered such that  $a(\mathcal{F}_1) \leq \cdots \leq a(\mathcal{F}_m)$.  Then the irreducible Brauer characters in the block can be labeled such that the decomposition matrix has a lower unitriangular shape:

\centers{$ D \, = \, \left( \begin{array}{cccc} I_{r_1} & 0 & \cdots & 0 \\ * & I_{r_2} & \ddots & \vdots \\
 \vdots & \ddots & \ddots & 0 \\ * & \cdots  & * & I_{r_m} \\ \end{array}\right)$}
 
 \noindent where each diagonal block is the identity matrix of the appropriate size.
\end{prop}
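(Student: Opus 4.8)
The plan is to read off the full decomposition matrix of $b$ from its Brauer tree, which is available to us by Corollary \ref{3cor1}, and then to relabel the irreducible Brauer characters using Lusztig's $a$-function. First I would record the decomposition numbers. By Corollary \ref{3cor1} the Brauer tree is the one predicted by Conjecture \hyperref[2conj1]{(HLM)}: each branch is one of the Harish-Chandra series meeting $b$, it is a line $\chi_{m_\zeta}-\chi_{m_\zeta+1}-\cdots-\chi_{M_\zeta}$ joined to the exceptional node through $\chi_{m_\zeta}$, and by Theorem \ref{2thm4} the vertex $\chi_{m_\zeta+i}$ is the eigenspace $Y_{m_\zeta+i}$ lying in $\Hc^{r+i}(\X,K)$. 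The two edges incident to $\chi_j$ are $S_j$ (towards the exceptional node) and, when $j<M_{\zeta_j}$, $S_{j+1}$. Feeding the description of the indecomposable projectives obtained in Section \ref{3se1} (namely $[P_j]=\chi_j+\chi_{j-1}$ for $j>m_{\zeta_j}$) and in Proposition \ref{3prop2} (namely $[P_{m_\zeta}]=\chi_{m_\zeta}+\chi_\mathrm{exc}$) into the cyclic-defect formalism recalled in Section \ref{2se3} yields

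\centers{$ [\overline{\chi_j}] \, = \, [S_j] \ \text{ if } \ j = M_{\zeta_j}, \qquad\qquad [\overline{\chi_j}] \, = \, [S_j] + [S_{j+1}] \ \text{ if } \ j < M_{\zeta_j},$}

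together with $[\overline{\chi_\mathrm{exc}}]=\sum_\zeta[S_{m_\zeta}]$. Listing both the unipotent characters and the simple modules as $(\chi_j)_{0\le j<h_0}$ and $(S_j)_{0\le j<h_0}$, the resulting $h_0\times h_0$ block of the decomposition matrix is therefore $\mathrm{Id}+N$, where $N$ is a $0/1$-matrix with a single $1$ in each position $(\chi_j,S_{j+1})$ for which $\chi_j,\chi_{j+1}$ are consecutive on a branch, and there is one further row $[\overline{\chi_\mathrm{exc}}]$ of $0$'s and $1$'s underneath.

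Next I would reorganise this matrix. Order the unipotent characters of $b$ so that the members of a single family are grouped together and the families occur as $\mathcal{F}_1,\dots,\mathcal{F}_m$ with $a(\mathcal{F}_1)\le\cdots\le a(\mathcal{F}_m)$ (ties broken arbitrarily), and order the $S_j$ accordingly through the bijection $\chi_j\leftrightarrow S_j$. I claim this is the labelling required by the statement, and that the claim needs a single numerical input: that $a(\chi_i)>a(\chi_{i+1})$ whenever $\chi_i$ and $\chi_{i+1}$ are consecutive on a branch with $\chi_{i+1}$ the vertex further from the exceptional node; equivalently, that the $a$-invariant strictly decreases along each branch as one moves away from the exceptional node. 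Granting this, $a$ being constant on families forces branch-consecutive characters into distinct families, so that each diagonal block of $\mathrm{Id}+N$ is an identity matrix $I_{r_i}$ with $r_i=|\mathcal{F}_i\cap b|$; and each off-diagonal $1$ of $\mathrm{Id}+N$, occurring only at a position $(\chi_i,S_{i+1})$, then has its row in the block of the family of $\chi_i$ and its column in the strictly earlier block of the family of $\chi_{i+1}$, hence lies strictly below the diagonal blocks. Thus the blocks above the diagonal vanish, and adjoining the remaining row $[\overline{\chi_\mathrm{exc}}]$ does not disturb this shape.

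So the whole argument comes down to the monotonicity of the $a$-function along the branches of the Brauer tree, and this is where I expect the real difficulty to lie: it is a comparison between the cohomological degree in $\Hc^\bullet(\X,K)$ in which a unipotent character of $b$ occurs and the $a$-invariant of that character. I would establish it from Lusztig's explicit analysis of the cohomology of Coxeter varieties in \cite{Lu}: the eigenvalue $\lambda_j=\zeta_j q^{m\delta/2}$ and the degree $r+j-m_{\zeta_j}$ in which $Y_j$ occurs are read off from \cite[\S 6--7 and Table 7.3]{Lu} and, together with the known generic degrees of the unipotent characters, determine $a(\chi_j)$; a type-by-type inspection then gives the monotonicity. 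More conceptually, this expresses the compatibility of the degree filtration on $\Hc^\bullet(\X,K)$ with the order on unipotent characters defined by the $a$-function, restricted to the principal $\Phi_h$-series, together with the fact that this filtration does not break families apart. Once this is in place, the steps above combine to give the announced lower-unitriangular shape of $D$; the one genuinely non-formal ingredient remains this cohomological-degree-versus-$a$-invariant comparison, for which there seems to be no way around Lusztig's classification of the cohomology of Coxeter Deligne-Lusztig varieties.
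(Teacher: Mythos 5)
Your proposal is correct and follows the same basic strategy as the paper: read off the decomposition matrix from the Brauer tree supplied by Corollary \ref{3cor1}, use that $a$ is constant on families to get the identity diagonal blocks, and reduce the unitriangularity to the monotone behaviour of the $a$-function along branches. The only real point of divergence is how the monotonicity is packaged. You propose to compare cohomological degree with $a$-invariant directly from Lusztig's tables; the paper instead invokes \cite[Lemma 5.11]{BMi2} to write the $q^\delta$-power $n_\chi$ in the Frobenius eigenvalue as $n_\chi = 2r - (a_\chi + A_\chi)/h$, observes that $n_\chi$ (being a function of $a_\chi$ and $A_\chi$) is constant on families and strictly increases along branches by Theorem \ref{2thm4}, and then reduces the case-by-case verification to the single numerical fact that within the block $a_\chi\le a_\eta$ if and only if $n_\eta\ge n_\chi$. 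This last step is exactly your ``cohomological degree vs.\ $a$-invariant'' comparison, but funnelled through the quantity $a_\chi + A_\chi$, which is a noticeably cleaner thing to check than recovering $a(\chi_j)$ from Table 7.3 of \cite{Lu} type by type; it also gives the ``same family $\Rightarrow$ distinct branches'' statement in one stroke rather than as a corollary of strict decrease. So: same route, with the paper's intermediate lemma $n_\chi = 2r - (a_\chi + A_\chi)/h$ doing the bookkeeping you would otherwise have to do by hand.
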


\begin{proof} By \cite[Lemma 5.11]{BMi2}, the power of $q^\delta$ occurring in the eigenvalue of $F^\delta$ associated to  $\chi$ is given by 

\centers{$ n_\chi \, = \, \displaystyle 2r - \frac{a_\chi + A_\chi}{h}\cdot $}

\noindent  Since these integers are constant on each family, we deduce from the shape of the tree that two distinct characters $\chi$ and $\eta$ lying the same family belong to distinct branches. In particular, $\dec_{kG}(\chi)$ and $\dec_{kG}(\eta)$ have no common irreducible component, which explains the identity matrices in the diagonal of $D$.

\sk

On the other side, one can check by a case-by-case analysis that in the principal $\ell$-block, $a_\chi \leq a_\eta$ if and only if $n_\eta \geq n_\chi$.  Since $n_\chi$ increases along each branch of the tree (see assertion $\mathrm{(ii)}$ in Conjecture \hyperref[2conj1]{(HLM)}), we deduce that the decomposition matrix has a lower triangular shape.\end{proof}

It is conjectured that these results (excluding the supercuspidality property) hold in general for any good prime number $\ell$ (see  \cite[Conjecture 3.4]{GeHis}). We will give in Section \ref{4se4} a more conceptual explanation of the previous proposition. 

\section{Towards Brou\'e's conjecture}

This last section aims at finalizing the study of the principal $\ell$-block of $G$. In all the results stated here, we assume that the following holds:

\centers{\begin{tabular}{cp{13cm}} \hskip-1mm $\mathbf{(S)}$ &  The $\Lambda$-modules $b\Hc^i(\Y(\dot c),\Lambda)$ are torsion-free. \end{tabular}}

\noindent Such an assumption is known to be valid for groups with $\mathbb{F}_q$-rank $1$ (since the corresponding Deligne-Lusztig variety is a irreducible affine curve) and for groups of type A$_n$  \cite{BR2}. Many other cases will be settled in a subsequent paper \cite{Du3} (see also \cite{Du2} for groups of type B$_n$, C$_n$ and ${}^2 $D$_n$). Under this assumption, the cohomology groups $b\Hc^i(\Y(\dot c),\Lambda)$ are integral versions of the groups $b\Hc^i(\Y(\dot c),K)$ and their $\ell$-reduction correspond to the groups $b\Hc^i(\Y(\dot c),k)$. Therefore, most of the information contained in the modular cohomology of $\Y(\dot c)$ is again obtained from  Lusztig's fundamental work.

\sk

We start by giving an explicit representative of the complex $b\Rgc(\Y(\dot c),\Lambda)$ in terms of the projective modules $P_j$ defined in the previous section. More precisely, we give, for all eigenvalue $\lambda$ of $F^\delta$, a  bounded complex of finitely generated projective $\Lambda G$-modules homotopy equivalent to the generalized $(\lambda)$-eigenspace of $F^\delta$. Under the assumption $\mathrm{(S)}$, we know explicitly the shape of the Brauer tree of the principal $\ell$-block. Surprisingly,  this complex turns out to be exactly the Rickard complex associated to the node labeled by $\lambda$ \cite[Section 4]{Ri}. From that observation we deduce that $b\Rgc(\Y(\dot c),\Lambda)$ induces a splendid equivalence between the principal $\ell$-blocks of $G$ and $\T^{cF} \rtimes C_W(c\sigma)$, as predicted in the geometric version of Brou\'e's conjecture \cite{BMa2}. We will conclude by giving two main consequences of this equivalence, namely the planar embedding of the Brauer tree and the unitriangularity shape of the decomposition matrix. As we have shown in section \ref{3se3}, the latter property is already a consequence of Conjecture \hyperref[2conj1]{(HLM)}. Nevertheless, we shall give here a conceptual approach to this result using the recent work of Chuang and Rouquier on perverse equivalences \cite{CR1}.

\subsection{Retrieving the complex\label{4se1}}

For the sake of notation, we shall simply denote by $\Y$ the Deligne-Lusztig variety associated to $\dot c$. We want to determine explicitly the contribution of each eigenspace of $F^\delta$ on the cohomology of $\Y$. This is in some sense a generalization of Proposition \ref{3prop2} which refers only to the "minimal" eigenvalues. We shall keep the same approach: using the torsion-free assumption one can find a small representative of  
the complex, in which only specific projective modules can show up. The determination of these modules is then achieved using the total character of the complex that we deduce from Lusztig's work. 

\sk

As in Section \ref{3se2}, we shall work with a specific representative of the cohomology complex $C = b\Rgc(\Y,\Lambda)$ with good finiteness properties: by Corollary \ref{1cor1}, there exists a bounded complex $\mathscr{C}$ of finitely generated $(\Lambda G,\Lambda \T^{cF})$-modules, whose terms are projective as both $\Lambda G$ and $\Lambda \T^{cF}$-modules such that $C$ is homotopy equivalent to $\mathscr{C}$. By transfer, the Frobenius $F^\delta$ induces an endomorphism $\mathscr{F}$ of $\mathscr{C}$ such that the images of $F^\delta$ and $\mathscr{F}$ coincide under the isomorphism $\mathrm{End}_{K^b(\Lambda G)}(\mathscr{C}) \simeq \mathrm{End}_{K^b(\Lambda G)}(C)$. 

\sk

Drawing inspiration from the case of $\mathrm{SL}_2(\mathbb{F}_q)$ detailled in \cite{Bon}, we express each relevant eigenspace of $\mathscr{F}$ on $\mathscr{C}$ in terms of the indecomposable projective $\Lambda G$-modules $P_j$:

\begin{thm}\label{4thm1}Let $j \in \intn{0}{h_0-1}$. The generalized $(\lambda_j)$-eigenspace of $\mathscr{F}$ on $\mathscr{C}$ is homotopy equivalent to the following complex, with non-zero terms in degrees $r,\ldots,r+j-m_{\zeta_j}$ only:

\centers{$  0 \longrightarrow P_{m_\zeta} \longrightarrow P_{m_\zeta+1} \longrightarrow \cdots \longrightarrow P_{j-1} \longrightarrow P_j \longrightarrow 0.$}

\noindent Moreover, the boundary maps $\overline{d}_i : \overline{P}_i \longrightarrow \overline{P}_{i+1}$ remain non-zero after $\ell$-reduction.
\end{thm}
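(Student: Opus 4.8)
The plan is to reuse the strategy of Proposition~\ref{3prop2}, but applied to a genuinely higher-dimensional slice of the cohomology complex, and to feed in the extra rigidity coming from assumption $\mathrm{(S)}$. First I would record the consequences of $\mathrm{(S)}$ for the complex $\mathscr{C}_{(\lambda_j)}$: by Proposition~\ref{1prop1}(i) its $K$-cohomology is $b\Hc^\bullet(\Y,K)_{(\lambda_j)}$, which by the argument already used in Proposition~\ref{3prop2} (via \cite[Corollary 9.9]{DeLu}, Theorem~\ref{2thm4} and Fact~\ref{2fact1}) equals the $\lambda_j$-eigenspace $\Hc^\bullet(\X,K)_{\lambda_j}$ of $F^\delta$ on the Coxeter variety; by Theorem~\ref{2thm4} this eigenspace is nonzero exactly in the $j-m_{\zeta_j}+1$ consecutive degrees $r,r+1,\dots,r+(j-m_{\zeta_j})$, where it is the simple module $Y_{m_{\zeta_j}},Y_{m_{\zeta_j}+1},\dots,Y_j$ respectively. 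Assumption $\mathrm{(S)}$ says $b\Hc^i(\Y,\Lambda)$ is torsion-free, hence so is each $(\lambda_j)$-eigenspace, so $\Hc^i(\mathscr{C}_{(\lambda_j)})$ is a torsion-free $\Lambda$-module which is zero after $\otimes K$ outside $[\![r;r+(j-m_{\zeta_j})]\!]$ — therefore zero there. So $\mathscr{C}_{(\lambda_j)}$ is a bounded complex of finitely generated projective $\Lambda G$-modules with cohomology concentrated in degrees $r,\dots,r+(j-m_{\zeta_j})$, and Lemma~\ref{1lem1}(b) (applicable since $\Hc^r$ is torsion-free) lets me choose a representative $0\to Q_r\to Q_{r+1}\to\cdots\to Q_{r+(j-m_{\zeta_j})}\to 0$ concentrated in exactly those degrees.

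Next I would identify the terms $Q_i$ with the $P_\bullet$. The total (virtual) character of $\mathscr{C}_{(\lambda_j)}$ is $\sum_i (-1)^{i-r}[\Hc^i(\X,K)_{\lambda_j}] = \sum_{s=0}^{j-m_{\zeta_j}} (-1)^s[Y_{m_{\zeta_j}+s}]$ by the degree computation above, so $\sum_i(-1)^i[Q_i]$ is this alternating sum. Each $Q_i$ is a projective $\Lambda G$-module in the principal block, hence a sum of the indecomposable projectives $P_\bullet$ and of the projective cover of the exceptional character; moreover the whole complex is perfect and, after $\ell$-reduction, has cohomology in degrees $r,\dots,2r$ (since $\Y$ is irreducible affine of dimension $r$). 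Running an induction on $i$ from the top degree $r+(j-m_{\zeta_j})$ downward — exactly as in the proof of Proposition~\ref{3prop2}, but iterated — I can peel off at each stage a minimal surjection onto the torsion-free top cohomology is empty, so $d_i$ is surjective and I can split, pin down $Q_{r+(j-m_{\zeta_j})}$ as a summand of $P_j$ using the unitriangular decomposition data (the cuspidal character $\chi_{\mathrm{exc}}$ can appear only through $P_{m_{\zeta_j}}$, the bottom of the branch), and continue. The combinatorics of the Brauer tree for a cyclic block — every $P_j$ has character $\chi_j+\chi_{j-1}$ (by the \textbf{Consequences} block of Section~\ref{3se1}) and $P_{m_{\zeta_j}}$ has character $\chi_{\mathrm{exc}}+\chi_{m_{\zeta_j}}$ (Proposition~\ref{3prop2}) — forces the unique way to write the alternating character sum $\sum_s(-1)^s\chi_{m_{\zeta_j}+s}$ as an alternating sum of indecomposable-projective characters: namely $Q_{r+s}=P_{m_{\zeta_j}+s}$ for $s=0,\dots,j-m_{\zeta_j}$ (with the convention $P_{m_{\zeta_j}}$ being the cuspidal cover), giving the complex $0\to P_{m_{\zeta_j}}\to\cdots\to P_j\to 0$ displayed in the statement. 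Indecomposability of each term (so that no trivial summand can be split off) is what rules out a shorter complex and is guaranteed by minimality of the representative from Lemma~\ref{1lem1}.

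Finally, for the non-vanishing of $\overline d_i$ after $\ell$-reduction: suppose some $\overline d_i=0$. Then $\overline{\mathscr{C}}_{(\lambda_j)}=\overline P_{m_{\zeta_j}}\to\cdots\to\overline P_j$ splits as a direct sum of (a shift of) the subcomplex in degrees $\le i$ and the one in degrees $>i$, and its cohomology $b\Hc^\bullet(\Y,k)_{\bar\lambda_j}$ would then contain a projective $kG$-module as a direct summand in some degree; but $b\Hc^\bullet(\Y,k)_{\bar\lambda_j}$ is the $\ell$-reduction of the torsion-free modules $\Hc^i(\X,K)_{\lambda_j}$ (using $\mathrm{(S)}$ and the universal coefficient theorem, Theorem~\ref{1thm1}), i.e.\ it is the $\ell$-reduction of the simple modules $Y_{m_{\zeta_j}},\dots,Y_j$, none of which is projective in the principal block of a cyclic-defect group (a projective would be a whole direct summand of the block algebra, impossible since the decomposition matrix from Section~\ref{3se3} is unitriangular of size $>1$). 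This contradiction gives $\overline d_i\neq 0$. The main obstacle I expect is the middle step — the bookkeeping that forces the \emph{exact} identification $Q_{r+s}=P_{m_{\zeta_j}+s}$ rather than merely "$Q_{r+s}$ is a sum of $P$'s whose alternating character sum is correct"; this requires combining indecomposability of the minimal representative with the precise structure of the cyclic Brauer tree (only the ends of branches are cuspidal, $\chi_{\mathrm{exc}}$ sits at the exceptional node) so that the Euler-characteristic constraint has a unique solution.
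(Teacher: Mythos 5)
Your opening reduction is on the right track, but it contains a factual error that propagates through the whole argument. You claim that $\Hc^\bullet(\mathscr{C}_{(\lambda_j)}\otimes_\Lambda K)$ is nonzero in \emph{every} degree $r,r+1,\dots,r+(j-m_{\zeta_j})$, with cohomology $Y_{m_{\zeta_j}},Y_{m_{\zeta_j}+1},\dots,Y_j$ respectively. That is not what Theorem~\ref{2thm4} says. The modules $Y_{m_\zeta+s}$ are eigenspaces for \emph{distinct} eigenvalues $\lambda_{m_\zeta+s}$, and by Fact~\ref{2fact1} these all lie in distinct $(\lambda)$-classes, so the generalized $(\lambda_j)$-eigenspace of $\Hc^i(\X,K)$ is $Y_j$ in the single degree $i=r+j-m_{\zeta_j}$ and zero in every other $i>r$. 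Together with \cite[Corollary~9.9]{DeLu}, the conclusion is that $\mathrm{H}^i(\mathscr{C}_{(\lambda_j)}\otimes K)$ vanishes for all $i\notin\{r,\,r+j-m_{\zeta_j}\}$, with character $\chi_{\mathrm{exc}}$ in degree $r$ and $\chi_j$ at the top — exactly what is recorded at the start of the proof of Lemma~\ref{4lem1}. Consequently your Euler characteristic $\sum_s(-1)^s\chi_{m_{\zeta_j}+s}$ is wrong; the correct total character is $\chi_{\mathrm{exc}}+(-1)^{j-m_{\zeta_j}}\chi_j$ (the telescoping sum of $[P_{m_\zeta}],\dots,[P_j]$). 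This is not a cosmetic point: the vanishing of the \emph{intermediate} cohomology is precisely what makes $\mathrm{Coker}\,d_i\cong\mathrm{Im}\,d_{i+1}$ torsion-free at each step, and that is the lever the paper pulls in both inductions.

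Even after repairing the character computation, the "middle step" you flag as a worry is a genuine gap, not just a worry. Knowing the total character and the length of the complex does not pin down the terms of a minimal representative; a priori each $Q_i$ could be a sum of several indecomposable projectives (including some from other $\zeta$-branches that cancel in the alternating sum), and minimality alone — without further structure — does not rule this out. The paper's proof introduces the height function $\hg$ and proceeds in two passes: Lemma~\ref{4lem1} scans left-to-right, splitting off indecomposable summands that are \emph{too high in the tree} by analysing torsion-freeness of $B/d(P_m)$; the proof of Theorem~\ref{4thm1} then scans right-to-left, identifying $\mathrm{Coker}\,\delta_{n-1}$ as a torsion-free lattice of character $\chi_n$ whose projective cover is constrained to be $P_n$ because of the height bound, and then splitting off the complementary summand $R'$. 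Your proposal has no analogue of the height bound, and the appeal to "indecomposability of the minimal representative plus the Brauer-tree combinatorics" does not by itself exclude the unwanted configurations. Your final argument for $\overline d_i\neq 0$ (a vanishing differential would split off a projective summand in cohomology) is plausible in spirit and differs from the paper's approach, which instead reads the non-vanishing off the explicit identification of $\mathrm{Ker}\,d_i$, $\mathrm{Im}\,d_i$ and $\mathrm{Coker}\,d_i$; but as stated your version also requires care, since for interior $i$ a vanishing $\overline d_i$ first forces $\overline d_{i-1}$ to be a split surjection before any contradiction appears.
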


Note that this complex is exactly the Rickard complex attached to the node labelled by $\chi_j$ in the Brauer tree \cite{Ri}. This observation will be fundamental in the next sections.

\sk

Before going into the details of the proof, let us recall some notation and basic properties of the Brauer tree $\Gamma$ of the principal $\ell$-block of $\Lambda G$.  The non-exceptional nodes are labelled by the unipotent characters $\chi_j$ lying in the block. There are as many simple $kG$-modules as unipotent characters, but we can distinguish
\begin{itemize}

\item the non-cuspidal modules $S_j$ with $j > m_{\zeta_j}$. Their projective cover $P_j$ has character $\chi_j +\chi_{j-1}$, and hence labels the edge connecting the nodes associated to $\chi_j$ and $\chi_{j-1}$;  

\item the cuspidal modules $S_{m_\zeta}$. The character of the corresponding projective cover is given by $[P_{m_\zeta}] = \chi_{m_\zeta} + \chi_{\mathrm{exc}}$. 

\end{itemize}

\noindent Moreover, from the particular shape of the tree (given by the conjecture of Hiss-L\"ubeck-Malle) one can deduce that for $j > m_{\zeta_j}$, the $\ell$-reduction of the projective module $P_j$ is given by

\centers{$ \overline{P}_j \, = \, 
\hskip-1.3mm \begin{array}{ccc} S_j \\[3pt] S_{j-1} \oplus S_{j+1} \\[3pt] S_j \end{array} \hskip-1.3mm 
$}

\noindent if $j < M_{\zeta_j}$, that is in the case where the node labelled by $\chi_j$ is not at an extremity of the tree. Otherwise it is given by

\centers{$ \overline{P}_{M_\zeta} \, = \, 
\hskip-1.3mm \begin{array}{ccc} S_{M_\zeta} \\[3pt] S_{M_{\zeta}-1} \\[3pt] S_{M_\zeta} \end{array} \hskip-1.3mm 
$}

\noindent On the other hand, if the planar embedding of $\Gamma$ is not specified, one cannot know precisely what the modules $\overline{P}_{m_\zeta}$ will look like. We know, however, that they have simple head and simple socle, both isomorphic to $S_{m_\zeta}$, and that their class in the Grothendieck group $K_0(kG$-$\mathrm{mod})$ is given by

\centers{$ \big[\overline{P}_{m_\zeta}\big]_k \, = \,  [S_{m_\zeta}]_k + [S_{m_\zeta+1}]_k + \displaystyle \frac{|\T^{cF}|_\ell-1}{h_0}\sum_{\xi} \,  [S_{m_\xi}]_k $.}

\noindent The number $(|\T^{cF}|_\ell-1)/{h_0}$ corresponds actually to the multiplicity of the exceptional node, that is the number of irreducible components of $\chi_{\mathrm{exc}}$.

\begin{de} We define the \emph{height in the tree $\Gamma$} of an indecomposable projective $\Lambda G$-module  $P$ lying in the principal block to be the minimal length of a path from the exceptional node to the edge labelled by $P$. It will be denoted by $\hg(P)$.  
\end{de}

We shall adopt the convention $\hg(P_{m_\zeta})=0$. By extension, the height of any finitely generated projective module lying in the block will be the maximal height of its indecomposable factors. Also, the height of a simple $kG$-module in the block will be naturally defined as the height of its projective cover, so that $\hg(P_j) = \hg(S_j) = j-m_{\zeta_j}$.

\begin{rmk}\label{4rmk1}If $P$ is a projective module of height $n$, then the height of any of its composition factors is at most $n+1$. 
\end{rmk}

In order to prove Theorem \ref{4thm1}, we start by scanning the complex from the left to the right by removing the highest indecomposable factors. We shall assume that $j$ is different from $m_{\zeta_j}$ since this case has been treated in Proposition \ref{3prop2}. We first obtain the following representative:

\begin{lem}\label{4lem1}Let $\zeta = \zeta_j$. The complex $\mathscr{C}_{(\lambda_j)}$ is homotopy equivalent to a bounded complex of finitely generated projective $\Lambda G$-modules 

\centers{$ 0 \longrightarrow R_{m_{\zeta}} \mathop{\longrightarrow}\limits^{\delta_{m_{\zeta}}} R_{m_{\zeta} +1} \longrightarrow \cdots \longrightarrow R_{j-1} \mathop{\longrightarrow}\limits^{\delta_{j-1}} R_j \longrightarrow 0$}

\noindent satisfying $\hg(R_i) \leq \hg(P_i)$ for all $i = m_{\zeta_j}, \ldots,j$.

\end{lem}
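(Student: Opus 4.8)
The first step is to pin down the cohomology of $\mathscr{C}_{(\lambda_j)}$ precisely enough to apply Lemma~\ref{1lem1}. Over $K$ the generalized $(\lambda_j)$-eigenspace is the genuine $\lambda_j$-eigenspace (Fact~\ref{2fact1}), and by Proposition~\ref{1prop1}, \cite[Corollary~9.9]{DeLu} and Theorem~\ref{2thm4} one gets that $\Hc^i(\mathscr{C}_{(\lambda_j)}\otimes_\Lambda K)$ equals $Y_j$ in degree $r+j-m_{\zeta_j}$, is concentrated in degree $r$ for the part coming from the non-unipotent characters of the block, and vanishes elsewhere; comparing this concentration with the Euler characteristic $\sum_i(-1)^{i-m_\zeta}[P_i]=\chi_{\mathrm{exc}}+(-1)^{j-m_\zeta}\chi_j$ (Lusztig's total character, exactly as in the proof of Proposition~\ref{3prop2}) identifies $\Hc^r(\mathscr{C}_{(\lambda_j)}\otimes K)$ as a module of character $\chi_{\mathrm{exc}}$ and $\Hc^{r+j-m_\zeta}(\mathscr{C}_{(\lambda_j)}\otimes K)$ as $Y_j$. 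Under $\mathrm{(S)}$ the groups $\Hc^i(\mathscr{C}_{(\lambda_j)})$ are torsion-free, being eigenspaces (hence direct summands) of the torsion-free modules $b\Hc^i(\Y,\Lambda)$, so they too vanish outside degrees $r$ and $r+j-m_\zeta$; in particular $\Hc^r(\mathscr{C}_{(\lambda_j)})$ is torsion-free, and Lemma~\ref{1lem1}(b) yields a representative $R_\bullet\colon 0\to R_{m_\zeta}\to\cdots\to R_j\to 0$ in the claimed degrees. By homotopy invariance and the universal coefficient theorem (Theorem~\ref{1thm1}) together with $\mathrm{(S)}$, the complex $R_\bullet$ is acyclic at every internal position, over $\Lambda$ and over $k$, while $\Hc^r(\overline{R_\bullet})$ and $\Hc^{r+j-m_\zeta}(\overline{R_\bullet})$ are the $\ell$-reductions of a module of character $\chi_{\mathrm{exc}}$ and of $Y_j$ respectively.

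To force $\hg(R_i)\le i-m_\zeta$ I would argue by descent on the number of ``offending'' summands. Suppose some $R_i$ has an indecomposable summand of height $>i-m_\zeta$; take the smallest such $i=i_1$ and an indecomposable summand $Q\subseteq R_{i_1}$ of maximal height $n>i_1-m_\zeta$. By minimality of $i_1$, every indecomposable summand of $R_{i_1-1}$ has height $\le i_1-1-m_\zeta<n$, so $\mathrm{head}(\overline{R_{i_1-1}})$ has no constituent of height $n$ and the composite $\overline{R_{i_1-1}}\to\overline{R_{i_1}}\twoheadrightarrow\overline{Q}$ has image in $\mathrm{rad}\,\overline{Q}$. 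When $i_1$ is internal, acyclicity of $\overline{R_\bullet}$ gives $\ker\big(\overline{\delta}_{i_1}|_{\overline{Q}}\big)=\overline{Q}\cap\mathrm{im}\,\overline{\delta}_{i_1-1}\subseteq\mathrm{rad}\,\overline{Q}$, so $\overline{\delta}_{i_1}|_{\overline{Q}}$ is injective on $\mathrm{head}(\overline{Q})$; since $\overline{Q}$ is indecomposable projective and $\overline{R_{i_1+1}}$ is a sum of projectives, this forces $\overline{\delta}_{i_1}|_{\overline{Q}}$ to be a split monomorphism with some component $\overline{Q}\to\overline{Q_b}$ an isomorphism. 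Lifting idempotents and that isomorphism to $\Lambda$, the corresponding component $Q\to Q_b$ of $\delta_{i_1}$ is an isomorphism, and Gaussian elimination deletes the contractible pair $(Q,Q_b)$ up to homotopy, strictly decreasing the count of offending summands; iterating terminates.

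The two extreme positions need different inputs. At $i_1=m_\zeta$ there is no incoming differential, but $\ker\overline{\delta}_{m_\zeta}=\Hc^r(\overline{R_\bullet})$ has only cuspidal, hence height-$0$, constituents (its character is $\chi_{\mathrm{exc}}$, whose $\ell$-reduction is supported on the $S_{m_\xi}$), so for $n>0$ the submodule $\overline{Q}\cap\ker\overline{\delta}_{m_\zeta}$ lies in $\mathrm{rad}\,\overline{Q}$ and the previous conclusion applies. At $i_1=j$ there is no outgoing differential; here one uses that $\mathrm{coker}\,\overline{\delta}_{j-1}=\Hc^{r+j-m_\zeta}(\overline{R_\bullet})$ is the indecomposable non-projective module $\overline{Y_j}$, together with the fact (visible from the Loewy structure of the $\overline{P_i}$ recalled before the statement) that $\mathrm{rad}\,\overline{Q}$ has all constituents of height $\ge\hg(Q)-1\ge j-m_\zeta$ while $\mathrm{head}(\overline{R_{j-1}})$ has constituents of height $\le j-1-m_\zeta$: these force $\overline{\delta}_{j-1}$ into a complement of $\overline{Q}$, so $\overline{Q}$ would be a direct summand of $\overline{Y_j}$ --- impossible. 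All the structural facts about $\overline{P_i}$ and $\overline{Y_i}$ used here are available, since $\mathrm{(S)}$ implies $\mathrm{(W)}$ and hence the shape of the Brauer tree (Corollary~\ref{3cor1}).

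The technical heart, and the step I expect to be the main obstacle, is precisely this cancellation mechanism: showing that a maximal-height offending summand is always split off by the adjacent differential. The head-injectivity criterion above is the clean way to see it, but it must be fed three compatible ingredients at once --- the height filtration (equivalently $\mathrm{Ext}^1_{kG}(S,S')\neq 0\Rightarrow|\hg(S)-\hg(S')|\le 1$ plus the explicit Loewy layers of the $\overline{P_i}$), the acyclicity of $\overline{R_\bullet}$ in the middle degrees, and the known characters $\chi_{\mathrm{exc}}$ and $\chi_j$ of the end cohomology --- and I expect the endpoint bookkeeping, where the middle acyclicity is unavailable and one must fall back on the explicit shape of the Coxeter-case Brauer tree, to be the most delicate part.
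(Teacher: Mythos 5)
Your overall strategy matches the paper's: first use $\mathrm{(S)}$, \cite[Corollary~9.9]{DeLu} and Theorem~\ref{2thm4} to concentrate $\mathscr{C}_{(\lambda_j)}$ in degrees $r,\ldots,r+j-m_{\zeta}$ via Lemma~\ref{1lem1}, then cancel the too-high indecomposable projective summands by a Gaussian-elimination argument. The paper sweeps left-to-right (building $R_{m_\zeta}$, then $R_{m_\zeta+1}$, etc.), whereas you descend on the number of offending summands; both organizations terminate. The more substantive difference is where the splitting is established: the paper works directly over $\Lambda$, showing $d|_{P_m}$ is injective by an ordinary-character computation (neither $\chi_m$ nor $\chi_{m-1}$ appears in $[\mathrm{Ker}\,d]$, so $P_m\cap\mathrm{Ker}\,d$ is a torsion-free module with zero character) and that $B/d(P_m)$ is torsion-free by a socle argument on $\overline{P}_m$, after which $(H,1)$-injectivity gives the $\Lambda$-splitting. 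You instead split over $k$ and lift, which is also a legitimate route.

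The gap is in your $k$-level splitting step. You claim that $\mathrm{Ker}\bigl(\overline{\delta}_{i_1}|_{\overline{Q}}\bigr)\subseteq\mathrm{rad}\,\overline{Q}$ already forces $\overline{\delta}_{i_1}|_{\overline{Q}}$ to be a split monomorphism into the projective module $\overline{R_{i_1+1}}$, on the grounds that it is ``injective on the head.'' This implication is false: a map between projectives over a symmetric algebra whose kernel lies in the radical need not be injective at all. For instance, on a Brauer tree algebra with two simples $S,T$ and uniserial projectives $P(S)$ with layers $S,T,S$ and $P(T)$ with layers $T,S,T$, the composite $P(S)\twoheadrightarrow P(S)/\mathrm{soc}\,P(S)\simeq\mathrm{rad}\,P(T)\hookrightarrow P(T)$ has kernel $\mathrm{soc}\,P(S)\subseteq\mathrm{rad}\,P(S)$ but is not split. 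What you need, and what your height bookkeeping in fact delivers, is the stronger statement $\mathrm{Ker}\bigl(\overline{\delta}_{i_1}|_{\overline{Q}}\bigr)=0$: any nonzero submodule of $\overline{Q}=\overline{P}_m$ contains $\mathrm{soc}\,\overline{Q}=S_m$, of height $n$, while $\overline{Q}\cap\mathrm{Im}\,\overline{\delta}_{i_1-1}$ sits inside $\mathrm{Im}\,\overline{\delta}_{i_1-1}$, a quotient of $\overline{R_{i_1-1}}$ all of whose composition factors have height at most $i_1-m_\zeta<n$ (by minimality of $i_1$ and Remark~\ref{4rmk1}). Hence the intersection is zero and $\overline{\delta}_{i_1}|_{\overline{Q}}$ is genuinely injective, so it splits because $\overline{Q}$ is injective. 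The same repair is needed at both endpoints: at $i_1=m_\zeta$ and at $i_1=j$ you again assert only ``contained in the radical'' when the socle argument gives ``zero,'' and it is the latter that justifies the split (respectively, that makes $\overline{Q}$ a direct summand of the indecomposable nonprojective $\mathrm{Coker}\,\overline{\delta}_{j-1}$, a contradiction). With this correction your proof goes through; it is essentially the paper's argument carried out at the residue field and then lifted, rather than directly over $\Lambda$.
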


\begin{proof} The assumption $\mathrm{(S)}$ together with the results \cite[Corollary 9.9]{DeLu} and \ref{2thm4} ensure that the cohomology of the complex $\mathscr{C}_{(\lambda_j)}$ vanishes outside the degrees $r$ and $r + j-m_{\zeta}$. By Lemma \ref{1lem1} we deduce that the latter can be represented by a bounded complex of finitely generated $\Lambda G$-modules

\centers{$ 0 \longrightarrow Q_{m_{\zeta}} \mathop{\longrightarrow}\limits^{d_{m_{\zeta}}} Q_{m_{\zeta} +1} \longrightarrow \cdots \longrightarrow Q_{j-1} \mathop{\longrightarrow}\limits^{d_{j-1}} Q_j \longrightarrow 0$}

\noindent satisfying the following properties:

\begin{itemize}

\item $\mathrm{Im}\, d_i \, = \, \mathrm{Ker}\, d_{i+1}$ for all  $i = m_{\zeta}, \ldots, r+j-m_{\zeta}-1$;

\item  $\mathrm{H}^r(\mathscr{C}_{(\lambda_j)}) = \mathrm{Ker}\, d_{m_{\zeta}}$ is a $\Lambda G$-lattice with character $n_j \chi_{\mathrm{exc}}$ for some non-negative integer $n_j$;

\item $\mathrm{H}^{r+j-m_{\zeta}}(\mathscr{C}_{(\lambda_j)}) = Q_j/\mathrm{Im} \, d_{j-1}$ is a $\Lambda G$-lattice with character $\chi_j$.

\end{itemize}

\noindent The integer $n_j$ is actually non-zero otherwise $\chi_j$ would be a a linear combination of projective characters.  Moreover, we know from Section \ref{2se2} that the multiplicity of $\chi_{\mathrm{exc}}$ in the cohomology of $\Y$ is $\sum_i n_i = |C_W(c\sigma)| = h_0$, which forces each integer $n_i$ to be equal to $1$.

\begin{rmk}\label{4rmk2}This proves incidentally that the $\ell$-reduction of any eigenvalue of $F^\delta$ on the cohomology of $\Y$ (and not only $\X$) is an $h_0$-th root of unity.
\end{rmk}

Let us prove by induction on $n$ that  $\mathscr{C}_{(\lambda_j)}$ is homotopy equivalent to a complex of the following form

\centers{$ 0 \longrightarrow R_{m_\zeta} \mathop{\longrightarrow}\limits^{\delta_{m_\zeta}} R_{m_\zeta +1}   \mathop{\longrightarrow}\limits^{\delta_{m_\zeta+1}}
 \cdots  \mathop{\longrightarrow}\limits^{\delta_{n-2}\vphantom{\delta_{m_\zeta}}}R_{n-1} 
  \mathop{\longrightarrow}\limits^{d_{n-1}'\vphantom{\delta_{m_\zeta}}} Q_n'  \mathop{\longrightarrow}\limits^{d_n'\vphantom{\delta_{m_\zeta}}} \cdots \mathop{\longrightarrow}\limits^{d_{j-1}'\vphantom{\delta_{m_\zeta}}} Q_j' \longrightarrow 0$}

\noindent with the $R_i$'s satisfying $\hg(R_i) \leq \hg(P_i)$ for all $i<n$. The case $n = m_\zeta$ is obtained from the previous analysis. 

\sk

Assume then that $\mathscr{C}_{(\lambda_j)}$ is homotopy equivalent to the previous complex for some integer $n \geq m_\zeta$ and let us try to symplify $Q_n'$. For the sake of notation, we shall write 
$d : A \longrightarrow B$ instead of $d_n' : Q_n' \longrightarrow Q_{n+1}'$. Let $P_m$ be any indecomposable direct summand  of $A$ that is assumed to be strictly higher than $P_n$ (and hence of non-zero height). If we decompose $A$ into $A = P_m \oplus A'$ then one can check that the following properties hold:

\begin{itemize}

\item $d$ restricts to an injective map from $P_m$ to $B$: by construction the character of $P_m$ is $\chi_m + \chi_{m-1}$ whereas the character of $\mathrm{Ker}\, d$ equals  

\centers{$ [\mathrm{Ker}\, d] \, = \, [R_{n-1}]-[R_{n-2}]+ \cdots + (-1)^{n-m_\zeta+1}[R_{m_\zeta}]+(-1)^{n-m_\zeta} \chi_{\mathrm{exc}}.$}

\noindent Now, by assumption, neither $\chi_m$ nor $\chi_{m-1}$ can occur in this expression. Therefore, the module $P_m \cap \mathrm{Ker}\, d$ has zero character; since it is torsion-free, it must be the zero module.

\item the quotient module $B/d(P_m)$ is torsion-free: we can use the following exact sequence  of $kG$-modules:

\centers{$ 0 \mathop{\longrightarrow}\limits \mathrm{Tor}_1^\Lambda\big(B/d(P_m),k\big) \mathop{\longrightarrow}\limits \overline{P}_m \mathop{\longrightarrow}\limits^{\overline{d}} \overline{B}. $}

\noindent Consequently, it is sufficient to show that $\mathrm{Ker}\, \overline{d} \cap \overline{P}_m$ is zero. Let us consider the class of $\mathrm{Ker}\, \overline{d}$ in $K_0(kG$-$\mathrm{mod})$, which is given by

\centers{$ \big[\mathrm{Ker}\, \overline{d}\big] \, = \, \big[\overline{R}_{n-1}\big]-\big[\overline{R}_{n-2}\big]+ \cdots + (-1)^{n-m_\zeta+1}\big[\overline{R}_{m_\zeta}\big]+(-1)^{n-m_\zeta} \big[\mathrm{Ker}\, \overline{\delta}_{m_\zeta}\big].$}

\noindent Using the assumption $\mathrm{(S)}$ and the universal coefficient formula, we can identify the 
 $kG$-module $\mathrm{H}^r(\mathscr{C}_{(\lambda_j)} \otimes_\Lambda k) = \mathrm{Ker}\, \overline{\delta}_{m_\zeta}$ with the $\ell$-reduction of $\mathrm{H}^r(\mathscr{C}_{(\lambda_j)})$ and can thus write

 \centers{$ \mathrm{Ker}\, \overline{\delta}_{m_\zeta} \, \simeq \, \mathrm{H}^r(\mathscr{C}_{(\lambda_j)})\otimes_\Lambda k \, = \, \overline{\mathrm{Ker} \, \delta_{m_\zeta}}$.}
 
 \noindent Since  $\mathrm{H}^r(\mathscr{C}_{(\lambda_j)})$ is cuspidal (it is torsion-free and its character is $\chi_{\mathrm{exc}}$), the module  $\mathrm{Ker}\, \overline{\delta}_{m_\zeta}$ has only cuspidal composition factors.  Therefore, the simple module $S_m$, which by definition has the same height as $P_m$, can occur as a composition factor
  
 \begin{itemize}
 
 \item neither in $\overline{R}_i$ for $i<n$, for the height of any of its irreducible components is at most $\hg(P_n)$ by assumption (see Remark \ref{4rmk1}); 
 
  \item nor in $\mathrm{Ker}\, \overline{\delta}_{m_\zeta}$ whose irreducible components are cuspidal and have hence height zero.
 
 \end{itemize}

\noindent From the expression of $[\mathrm{Ker}\, d]$ given previously we deduce that $S_m$ is not a composition factor of $\mathrm{Ker}\, d$. Since it is the only simple module of the socle of $\overline{P}_m$, this forces $ \mathrm{Ker}\, \overline{d} \cap \overline{P}_m$ to be zero.
 
\end{itemize}

\noindent Consequently, we can decompose $B$ into $B = d(P_m) \oplus B'$ so that $d$ induces an isomorphism between $P_m$ and $d(P_m)$. If we define $d' : A' \longrightarrow B'$ to be the composition of the restriction $d_{|A'}$  with the projection  $B \longrightarrow B'$, then one can construct the following complex

\centers{$ 0 \longrightarrow R_{m_\zeta} \mathop{\longrightarrow}\limits^{\delta_{m_\zeta}}
 \cdots  \mathop{\longrightarrow}\limits R_{n-1} 
  \mathop{\longrightarrow}\limits A'  \mathop{\longrightarrow}\limits^{d'\vphantom{\delta_{m_\zeta}}} B' \longrightarrow \cdots \mathop{\longrightarrow}\limits^{d_{j-1}' \vphantom{\delta_{m_\zeta}}} Q_j' \longrightarrow 0$}

\noindent which is clearly homotopy equivalent to $\mathscr{C}_{(\lambda_j)}$. By removing repeatedly all the indecomposable direct summands of $A$ that are higher to $P_n$ we obtain the projective module $R_{n}$. \end{proof}

In the previous lemma, we have modified the complex from the left to the right by removing the superfluous projective modules. We now use the same method in the other direction to finish the proof of Theorem \ref{4thm1}.

\begin{proof}[Proof of the theorem] We argue once again by induction: we show that up to homotopy $\mathscr{C}_{(\lambda_j)}$ can be written as

\centers{$0 \longrightarrow R_{m_\zeta} \mathop{\longrightarrow}\limits^{\delta_{m_\zeta}} R_{m_\zeta +1}  \longrightarrow \cdots \mathop{\longrightarrow}\limits^{\delta_{n-1}\vphantom{\delta_{m_\zeta}}} R_{n} \mathop{\longrightarrow}\limits^{\delta \vphantom{\delta_{m_\zeta}}} P_{n+1} \longrightarrow \cdots \longrightarrow P_j \longrightarrow 0$}

\noindent where the modules $R_i$'s satisfy the condition $\hg(R_i) \leq \hg(P_i)$. Note that the case $n=j$ has been treated in the previous lemma.

\sk

Assume that we are working with the previous complex for some integer $n \leq j$. In that case, the character of the $\Lambda G$-module $\mathrm{Coker}\, \delta_{n-1} = R_n/ \mathrm{Im}\, \delta_{n-1}$ is given by

\centers{$\begin{array}{r@{\ \, = \, \ }l} [\mathrm{Coker}\, \delta_{n-1} ] &  [P_{n+1}]-[P_{n+2}]+ \cdots + (-1)^{j-n+1}[P_j]+ (-1)^{j-n} \big[\mathrm{H}^{r+j-m_\zeta}(\mathscr{C}_{(\lambda_j)})\big] \\[4pt]
& (\chi_n + \chi_{n+1}) - (\chi_{n+1} + \chi_{n+2}) + \cdots + (-1)^{j-n}\chi_j \\[4pt]
[\mathrm{Coker}\, \delta_{n-1}] & \chi_n + (\chi_{n+1} - \chi_{n+1}) - (\chi_{n+2} -\chi_{n+2}) +  \cdots \, = \, 
\chi_n. \end{array}$}

\noindent In addition, it is torsion-free: it is indeed isomorphic either to $\mathrm{Im} \, \delta$ when $n<j$ or to $\mathrm{H}^{r+j-m_\zeta}(\mathscr{C}_{(\lambda_j)})$ when $n=j$. Therefore, the head of  the $\Lambda G$-module $\mathrm{Coker}\, \delta_{n-1}$ consists of at most two simple modules, namely $S_n$ and $S_{n+1}$. Let $P$ be a projective cover of $\mathrm{Coker}\, \delta_{n-1}$. The canonical projection  $R_n \twoheadrightarrow \mathrm{Coker}\, \delta_{n-1}$ factors through $P$ so that $\hg(P) \leq \hg(R_n) \leq \hg(P_n)$. Consequently, $S_{n+1}$ cannot be in the head of $P$ which forces $P$ to be exactly $P_n$. This allows us to decompose the module $R_n$ into $R_n = P_n \oplus R'$ with $R' \subset \mathrm{Im}\, \delta_{n-1}$.

\sk         

For the sake of notation we shall now write $\partial : A \longrightarrow B$ instead of $\delta_{n-1} : R_{n-1} \longrightarrow R_n$. We can argue as in \cite{Bon}: since $R'$ is a projective module, the map $\partial^{-1}(R') \twoheadrightarrow R'$ splits. The image of the corresponding splitting map is a sub-module $R''$ of $A$ isomorphic to $R'$, such that the quotient $A / R''$ is torsion-free. Indeed, one can embed $(A/R'')/(\partial^{-1}(R')/R'')$ in $B/R'$ via $\partial$ and both $B/R' \simeq P_n$ and $\partial^{-1}(R')/R'' \simeq \partial^{-1}(R') \cap \mathrm{Ker}\, \partial \subset A$ are torsion-free. Since $R''$ is projective and $A/R''$ is torsion-free, then the map $R'' \hookrightarrow A$ must be a retraction (see the proof of Lemma \ref{1lem1} for more details), and $A$ decomposes into $A = R'' \oplus A'$ as a $\Lambda G$-module. It follows that $\mathscr{C}_{(\lambda_j)}$ is homotopy equivalent to

\centers{$0 \longrightarrow R_{m_\zeta} \longrightarrow \cdots \longrightarrow A' \mathop{\longrightarrow}  P_n \mathop{\longrightarrow} P_{n+1} \longrightarrow \cdots \longrightarrow P_j \longrightarrow 0$}

\noindent where the heights of the modules $R_{m_\zeta},\ldots, R_{n-1},A'$ satisfy the conditions given in Lemma \ref{4lem1}. At the last step of the induction we have  obtained  a complex of the following form:

\centers{$0 \longrightarrow R \longrightarrow P_{m_\zeta+1} \longrightarrow \cdots \longrightarrow P_j\longrightarrow 0.$}

\noindent Finally, the character of $R$ (and therefore $R$ itself) can be deduced from the total character of the complex, which is here equal  to $\chi_\mathrm{exc} + (-1)^{j-m_\zeta} \chi_j$.\end{proof}

\begin{rmk} One can actually make the boundary maps $d_i  : P_i \longrightarrow P_{i+1}$ explicit. The characters of the kernel and the image of $d_i$ can be deduced from the cohomology of the variety $\Y$ with coefficients in $K$. One gets respectively $[\mathrm{Ker}\,  d_i ] = \chi_{i-1}$ and $[\mathrm{Im}\, d_i] = \chi_i$. Besides, we have shown in the course of the proof that the cokernel of $d_i$ is a torsion-free module with character $[\mathrm{Coker}\, d_i] = \chi_{i+1}$. Consequently, the $\ell$-reduction of $d_i$ factors through

\centers{$\begin{psmatrix}  \begin{array}{ccc} \textcolor{violet}{S_i} \\[3pt] S_{i-1} \oplus \textcolor{violet}{S_{i+1}} \\[3pt] S_i \end{array} & \begin{array}{c} \textcolor{violet}{S_i} \\[3pt] \textcolor{violet}{S_{i+1} }\end{array} & \begin{array}{ccc} S_{i+1} \\[3pt] \textcolor{violet}{S_{i}} \oplus S_{i+2} \\[3pt] \textcolor{violet}{S_{i+1}} \end{array}
\psset{arrows=H->,hooklength=1.5mm,hookwidth=-1.2mm,nodesep=5pt} 
\everypsbox{\scriptstyle} 
\ncline{1,2}{1,3}    
\psset{arrows=->>,nodesep=5pt} 
\ncline{1,1}{1,2}
\end{psmatrix}$}

\noindent and the complex $\overline{C} = b\Rgc(\Y,k)$ is, as expected, homotopy equivalent to the Rickard complex associated to the Brauer tree $\Gamma$ \cite{Ri}. 

\end{rmk}

As a byproduct, we obtain many properties of the cohomology of the Deligne-Lusztig variety $\Y$. We shall use the followings: 

\begin{cor}\label{4cor1}The complex $C = b\Rgc(\Y,\Lambda)$ is a tilting complex for $\Lambda G b$. In other words, it is a perfect complex of $\Lambda G   b$-modules satisfying the following properties:

 \begin{itemize}
 \item[$\bullet$] $\mathrm{Hom}_{K^{b}(\Lambda G{b})}({C},{C}[i]) = 0$ for $i\neq 0$ ;  

 \item[$\bullet$] $\mathrm{add}\, C$ generates $K^b(\Lambda G{b}$-$\mathrm{mod})$ as a triangulated category.  
 \end{itemize}

\noindent Moreover, the endomorphism algebra $\mathrm{End}_{K^{b}(\Lambda G b)}(C)$ is free over  $\Lambda$ and is homotopy equivalent to  $\mathrm{Hom}_{\Lambda G b}^\bullet(C,C)$ as a complex of $(\Lambda \T^{cF},\Lambda \T^{cF})$-bimodules.
\end{cor}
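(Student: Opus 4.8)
The strategy is to deduce everything from Theorem \ref{4thm1} together with the structural properties of the Brauer tree $\Gamma$ recalled just before it. First, $C = b\Rgc(\Y,\Lambda)$ is perfect over $\Lambda G b$: by Corollary \ref{1cor1} the stabilizers of points on $\Y(\dot c)$ are (finite quotients of) unipotent groups, hence have order prime to $\ell$, so $\Rgc(\Y,\Lambda)$ is a perfect complex of $\Lambda G$-modules, and multiplying by the block idempotent $b$ keeps it perfect over $\Lambda G b$. So $C$ is homotopy equivalent to the complex $\mathscr{C}$ of Corollary \ref{1cor1}, projective over both sides, and the block decomposition $C = \bigoplus_{\bar\lambda} \mathscr{C}_{(\lambda)}$ (Remark \ref{4rmk2}) expresses $C$ up to homotopy as the direct sum, over $j \in \intn{0}{h_0-1}$, of the Rickard-type complexes of Theorem \ref{4thm1}. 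Thus $\mathrm{add}\,C$ contains each $P_j$ (read off the terms of these complexes), and since the $P_j$ together with $P_{m_\zeta}$ exhaust the indecomposable projectives of the block, $\mathrm{add}\,C$ contains all projective $\Lambda G b$-modules; as these generate $K^b(\Lambda G b\text{-}\mathrm{mod})$ as a triangulated category, so does $\mathrm{add}\,C$. This gives the second tilting condition.

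For the self-orthogonality $\mathrm{Hom}_{K^b(\Lambda G b)}(C,C[i]) = 0$ for $i \neq 0$, the plan is to reduce modulo $\ell$ and work over $k$. Since $C$ is perfect, $\mathrm{Hom}_{K^b}(C,C[i]) = \mathrm{H}^i\,\mathrm{Hom}^\bullet_{\Lambda G b}(C,C)$, and $\mathrm{Hom}^\bullet_{\Lambda G b}(C,C)$ is a complex of finitely generated $\Lambda$-modules (even of $(\Lambda\T^{cF},\Lambda\T^{cF})$-bimodules, by functoriality of the right $\T^{cF}$-action on $\Y(\dot c)$). By the universal coefficient theorem (Theorem \ref{1thm1}) it suffices to show both that $\overline{C} = b\Rgc(\Y,k)$ is self-orthogonal in $K^b(kG b)$ and that $KC$ is self-orthogonal in $K^b(KG b)$. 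Over $K$ the complex $KC$ is, up to homotopy, a direct sum of its cohomology groups placed in various degrees (the terms of $\mathscr{C}_{(\lambda)}\otimes_\Lambda K$ have cohomology concentrated in a single degree by Theorem \ref{4thm1}), so $KGb$ being semisimple forces $\mathrm{Hom}_{K^b(KGb)}(KC,KC[i]) = 0$ for $i \neq 0$. Over $k$, the last remark of Section \ref{4se1} identifies $\overline{C}$ with the Rickard complex attached to $\Gamma$, which Rickard proved in \cite{Ri} to be a tilting complex for $kGb$; hence $\mathrm{Hom}_{K^b(kGb)}(\overline{C},\overline{C}[i]) = 0$ for $i \neq 0$. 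Feeding these two vanishings into the universal coefficient sequence gives $\mathrm{H}^i\,\mathrm{Hom}^\bullet_{\Lambda G b}(C,C)\otimes_\Lambda k = 0$ and no $\mathrm{Tor}$ obstruction for $i\neq 0$, whence $\mathrm{H}^i\,\mathrm{Hom}^\bullet_{\Lambda G b}(C,C) = 0$ for $i\neq 0$ by Nakayama, and it is $\Lambda$-free for $i=0$.

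Finally, for the statement about the endomorphism algebra: from the previous paragraph $E := \mathrm{End}_{K^b(\Lambda Gb)}(C) = \mathrm{H}^0\,\mathrm{Hom}^\bullet_{\Lambda Gb}(C,C)$ is $\Lambda$-free, and since all higher cohomology of $\mathrm{Hom}^\bullet_{\Lambda Gb}(C,C)$ vanishes, that complex is quasi-isomorphic — hence, being a bounded complex of $\Lambda$-free modules with free cohomology, homotopy equivalent — to $E$ concentrated in degree $0$; this homotopy equivalence respects the $(\Lambda\T^{cF},\Lambda\T^{cF})$-bimodule structure because the right $\T^{cF}$-action is induced geometrically and commutes with the $G$-action used throughout. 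The main obstacle is the self-orthogonality: one must be careful that the homotopy equivalence $C \simeq \mathscr{C}$ and the identification of $\overline{C}$ with Rickard's complex are compatible enough to transport Rickard's theorem \cite{Ri}, and that reducing $\mathrm{Hom}^\bullet$ modulo $\ell$ commutes with cohomology — which is exactly where the perfectness of $C$ (so that $\mathrm{RHom} = \mathrm{Hom}^\bullet$ and $\mathrm{Hom}^\bullet_{\Lambda Gb}(C,C)\otimes_\Lambda k \simeq \mathrm{Hom}^\bullet_{kGb}(\overline{C},\overline{C})$) is used in an essential way.
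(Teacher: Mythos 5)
Your proposal is correct in substance and reaches the same conclusion, but it takes a genuinely different route from the paper at the key step. The paper's proof is very short: it observes that $\overline{C}$ is homotopy equivalent to a Rickard complex (the remark following Theorem \ref{4thm1}), invokes Rickard's theorem that the Rickard complex is a tilting complex for $kGb$, and then cites the lifting theorem of \cite{Ri3} (tilting complexes for $kGb$ lift uniquely to tilting complexes for $\Lambda Gb$, and any bounded complex of finitely generated projective $\Lambda Gb$-modules whose $\ell$-reduction is a tilting complex is itself a tilting complex) to conclude that $C$ is a tilting complex in one stroke. You instead re-prove that special case of the lifting theorem by hand: you check self-orthogonality by reducing modulo $\ell$ and applying the universal coefficient theorem together with Nakayama, and you check the generation condition directly by peeling off the projectives $P_j$ from the eigenspace complexes $\mathscr{C}_{(\lambda_j)}$ of Theorem \ref{4thm1}. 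Both routes work. The paper's is shorter because it leans on Rickard's lifting theorem as a black box; yours makes the mechanism visible and stays self-contained within the tools already developed in Sections \ref{1se1}--\ref{1se2}. Your extra semisimplicity argument over $K$ is harmless but unnecessary: once $\mathrm{H}^{i}\,\mathrm{Hom}^{\bullet}_{kGb}(\overline C,\overline C)=0$ for $i\neq 0$, the universal coefficient sequence together with Nakayama already forces $\mathrm{H}^{i}\,\mathrm{Hom}^{\bullet}_{\Lambda Gb}(C,C)=0$ for $i\neq 0$ and torsion-freeness of $\mathrm{H}^{0}$, without looking at $K$.

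One point deserves tightening. At the end you argue that $\mathrm{Hom}^{\bullet}_{\Lambda Gb}(C,C)$, having cohomology concentrated in degree $0$ and free over $\Lambda$, is homotopy equivalent to $E:=\mathrm{End}_{K^{b}(\Lambda Gb)}(C)$ in degree $0$, and you assert this respects the $(\Lambda\T^{cF},\Lambda\T^{cF})$-bimodule structure ``because the right $\T^{cF}$-action commutes with the $G$-action''. Commutation of actions is not the issue: the issue is whether the splittings used to contract the complex can be chosen as bimodule maps. The paper handles this precisely via Lemma \ref{1lem1}: since the terms of $\mathscr{C}$ are projective as both left $\Lambda G$- and right $\Lambda\T^{cF}$-modules, the terms of $\mathrm{Hom}^{\bullet}_{\Lambda Gb}(C,C)$ are finitely generated projective $(\Lambda\T^{cF},\Lambda\T^{cF})$-bimodules, and Lemma \ref{1lem1} (with $\mathcal{O}=\Lambda$, using torsion-freeness of $\mathrm{H}^{0}$) contracts the complex within $K^{b}$ of that bimodule category. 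You should replace the appeal to commuting actions with this projectivity argument.
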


\begin{proof}  $\overline{C}$ is homotopy equivalent to a Rickard complex and, as such, it is a tilting complex \cite[Theorem 4.2]{Ri}. The same holds for $C$ since it is the unique tilting complex lifting $\overline{C}$ (see \cite[Proposition 3.1 and Theorem 3.3]{Ri3}). Consequently,  the cohomology of both $E = \mathrm{Hom}_{\Lambda G}^\bullet(C,C)$ and $\overline E = E \otimes_\Lambda k$ is zero outside the degree $0$.  Now $C$ is a perfect complex, and therefore $\mathrm{Hom}_{\Lambda G}^\bullet(C,C)$ is homotopy equivalent to a complex of finitely generated projective $(\Lambda \T^{cF},\Lambda \T^{cF})$-bimodules. By Lemma \ref{1lem1}, we deduce that $E$ is homotopy equivalent to $\mathrm{H}^0(E)\simeq   \mathrm{End}_{K^{b}(\Lambda G)}(C)$. In particular, the latter module is free over $\Lambda$.
\end{proof}

\begin{cor}\label{4cor2}The natural homomorphism of $\Lambda$-algebra

\centers{$\mathrm{End}_{K^{b}(\Lambda G b)}\big(b\Rgc(\Y,\Lambda)\big) \longrightarrow \mathrm{End}_{\Lambda G b}^\mathrm{gr}\big(\mathrm{H}_c^\bullet(\Y,\Lambda)\big)$}

\noindent is injective.
\end{cor}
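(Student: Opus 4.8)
The plan is to reduce the statement to the concentration properties established in Corollary \ref{4cor1} together with the universal coefficient theorem (Theorem \ref{1thm1}). Write $C = b\Rgc(\Y,\Lambda)$ and $E = \mathrm{Hom}_{\Lambda G b}^\bullet(C,C)$. The target algebra $\mathrm{End}_{\Lambda G b}^\mathrm{gr}\big(\mathrm{H}_c^\bullet(\Y,\Lambda)\big)$ is by definition the degree-zero part of the graded Hom between the cohomology modules, so the natural map factors as
\centers{$\mathrm{End}_{K^b(\Lambda G b)}(C) \simeq \mathrm{H}^0(E) \longrightarrow \mathrm{Hom}_{D(\Lambda G b)}^\bullet(C,C) \longrightarrow \mathrm{Ext}^0\big(\mathrm{H}_c^\bullet(\Y,\Lambda),\mathrm{H}_c^\bullet(\Y,\Lambda)\big) = \mathrm{End}^\mathrm{gr}\big(\mathrm{H}_c^\bullet(\Y,\Lambda)\big)$.}
First I would recall that, since $C$ is a perfect complex, $E$ is homotopy equivalent to a bounded complex of finitely generated projective $(\Lambda\T^{cF},\Lambda\T^{cF})$-bimodules, and by Corollary \ref{4cor1} its cohomology is concentrated in degree $0$ and $\mathrm{H}^0(E) \simeq \mathrm{End}_{K^b(\Lambda G b)}(C)$ is $\Lambda$-free.

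The key step is to exploit the hyper-Ext spectral sequence computing $\mathrm{Hom}_{D(\Lambda G b)}^n(C,C)$ from the $\mathrm{Ext}$-groups between the individual cohomology modules $\mathrm{H}_c^i(\Y,\Lambda)$. Under assumption $\mathrm{(S)}$ these cohomology modules are $\Lambda$-free, so there are no $\mathrm{Tor}$-obstructions; the relevant edge map is precisely the natural homomorphism in the statement, landing in $\mathrm{Hom}^0$. To see it is injective, I would argue that an element of $\mathrm{H}^0(E) = \mathrm{End}_{K^b(\Lambda G b)}(C)$ killed by passage to cohomology is a chain map $\varphi: \mathscr{C}_{(\lambda)}\text{-pieces} \to \mathscr{C}$ inducing zero on $\mathrm{H}^\bullet$; using the explicit representative of $C$ given by Theorem \ref{4thm1} — a complex whose terms are the $P_j$ and whose reductions $\overline{d}_i$ remain non-zero — one shows that such a $\varphi$ is null-homotopic. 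Concretely, since each $\mathrm{H}_c^i$ is torsion-free and the differentials are "as non-degenerate as the Brauer tree allows," a chain map zero on cohomology must split off along the image/kernel decompositions exactly as in the proof of Lemma \ref{1lem1}, giving a contracting homotopy.

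Alternatively — and this is probably the cleanest route — I would prove injectivity after $\ell$-reduction and then lift. Because $\mathrm{End}_{K^b(\Lambda G b)}(C)$ is $\Lambda$-free (Corollary \ref{4cor1}) and $\mathrm{H}_c^\bullet(\Y,\Lambda)$ is $\Lambda$-free (assumption $\mathrm{(S)}$), both source and target of the map embed into their scalar extensions to $K$. Over $K$ the complex $KC$ has cohomology concentrated in a single degree $r$ up to the splitting into $(\lambda_j)$-eigenspaces handled by Theorem \ref{4thm1}, where $K$-linearly the complex $(KC)_{(\lambda_j)}$ has one-dimensional-per-degree cohomology along a line of the tree; there the graded endomorphism ring of the cohomology visibly receives $\mathrm{End}_{D(KGb)}(KC) = \mathrm{End}_{KGb}(\text{the tilting object})$ injectively, essentially because over the semisimple algebra $KGb$ a complex is determined by its cohomology. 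Chasing the commutative square relating the $\Lambda$-map to the $K$-map, injectivity over $K$ forces injectivity over $\Lambda$.

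The main obstacle I anticipate is the bookkeeping in the spectral-sequence / chain-homotopy argument: one must ensure that a degree-$0$ endomorphism of $C$ that vanishes on \emph{all} the cohomology groups $\mathrm{H}_c^i$ (not just on one eigenspace) is genuinely null-homotopic, and this requires the torsion-freeness in assumption $\mathrm{(S)}$ in an essential way — without it the $\mathrm{Tor}_1$ terms in Theorem \ref{1thm1} would contribute phantom maps, and injectivity could genuinely fail. So the heart of the proof is verifying that the differentials of the explicit representative from Theorem \ref{4thm1}, which are non-zero mod $\ell$, leave no room for a nonzero cohomologically-trivial endomorphism; once that is in place, the rest is the formal edge-map argument.
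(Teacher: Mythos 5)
Your ``alternative'' route is essentially the paper's own proof: the paper draws a commutative square whose horizontals are scalar extension $\Lambda\to K$ of the two endomorphism algebras and whose verticals are the natural maps to graded endomorphisms of cohomology, then observes that the top horizontal is injective because $\mathrm{End}_{K^b(\Lambda Gb)}(C)$ is $\Lambda$-free (Corollary~\ref{4cor1}) and the right vertical is an isomorphism because $KGb$ is semisimple, whence the left vertical is injective by commutativity. Two small things to clean up in your write-up: you open the alternative by saying you would ``prove injectivity after $\ell$-reduction and then lift,'' which is not what you (or the paper) actually do and would not by itself give injectivity over $\Lambda$ (it is \emph{surjectivity} mod $\ell$ that lifts via Nakayama) --- what you actually use is the embedding into the $K$-scalar-extension, which is correct; and the $\Lambda$-freeness of the \emph{target} (your appeal to assumption $(\mathrm{S})$) is not needed for the diagram chase, since only the source needs to embed into its $K$-form. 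Your first route via a hyper-Ext spectral sequence and an explicit null-homotopy on the representative from Theorem~\ref{4thm1} is unnecessary heavy machinery that the paper avoids entirely.
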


\begin{proof} Since the category $KG$-$\mathrm{mod}$ is semi-simple, we have the following commutative diagram, with $C = b\Rgc(\Y,\Lambda)$

\centers{$ \begin{psmatrix}   \mathrm{End}_{D^{b}(\Lambda G b)}(C) &   \mathrm{End}_{D^{b}(K Gb)}(C)
\\   \mathrm{End}_{\Lambda G b}^{\mathrm{gr}} \big( \mathrm{H}^\bullet(C)\big) & \mathrm{End}_{K G b }^{\mathrm{gr}} \big( \mathrm{H}^\bullet(C)\big) 
\psset{arrows=->,nodesep=5pt} 
\everypsbox{\scriptstyle} 
\ncline{1,1}{1,2}^{\iota}    
\ncline{1,1}{2,1}
\ncline{<->}{1,2}{2,2}
\ncline{2,1}{2,2}
\end{psmatrix}$}

\noindent By the previous corollary, the $\Lambda$-module $ \mathrm{End}_{D^{b}(\Lambda G b )}(C) \simeq  \mathrm{End}_{K^{b}(\Lambda G b)}(C)$ is free over $\Lambda$. We deduce that the map $\iota$, as well as the first vertical arrow, are into. \end{proof}

\begin{cor}\label{4cor3}The image of $F^h-1$ in $\mathrm{End}_{K^{b}(k G \overline{b})}\big(\overline{b}\Rgc(\Y,k)\big)$ is nilpotent.
\end{cor}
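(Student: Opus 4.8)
The plan is to realize $F^{h}$ at the chain level on a finite representative of $b\Rgc(\Y,\Lambda)$ and exploit the generalized eigenspace decomposition of $F^{\delta}$ introduced in Section~\ref{1se2}. As in Section~\ref{4se1} (or already by Corollary~\ref{1cor1}), I fix a bounded complex $\mathscr{C}$ of finitely generated projective $\Lambda G$-modules homotopy equivalent to $C=b\Rgc(\Y,\Lambda)$, together with the honest chain endomorphism $\mathscr{F}$ of $\mathscr{C}$ induced by $F^{\delta}$; since $h=h_{0}\delta$, the image of $F^{h}$ under $\mathrm{End}_{K^{b}(\Lambda G b)}(C)\simeq\mathrm{End}_{K^{b}(\Lambda G b)}(\mathscr{C})$ is the class of $\mathscr{F}^{h_{0}}$. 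By the construction recalled in Section~\ref{1se2}, the idempotents attached to the generalized eigenvalues of $\mathscr{F}$ yield a decomposition $\mathscr{C}=\bigoplus_{\bar\lambda}\mathscr{C}_{(\lambda)}$ into $\mathscr{F}$-stable subcomplexes of projective modules; reducing modulo $\ell$ gives an $\overline{\mathscr{F}}$-stable decomposition $\overline{\mathscr{C}}=\bigoplus_{\bar\lambda}\overline{\mathscr{C}}_{(\lambda)}$, and $\overline{\mathscr{C}}$ is homotopy equivalent to $\overline{b}\Rgc(\Y,k)$.

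The second step is to show that $\overline{\mathscr{F}}^{\,h_{0}}-\mathrm{id}$ is a genuinely nilpotent chain endomorphism of $\overline{\mathscr{C}}$. On each summand one has $P_{\bar\lambda}(\mathscr{F})=0$ on $\mathscr{C}_{(\lambda)}$ by construction of the eigenspace idempotent, and $\overline{P_{\bar\lambda}}(T)=(T-\bar\lambda)^{N}$ with $N=\deg P_{\bar\lambda}$, so $(\overline{\mathscr{F}}-\bar\lambda\cdot\mathrm{id})^{N}=0$ on $\overline{\mathscr{C}}_{(\lambda)}$; thus $\overline{\mathscr{F}}$ acts on $\overline{\mathscr{C}}_{(\lambda)}$ as $\bar\lambda\cdot\mathrm{id}+\nu$ with $\nu$ nilpotent. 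By Remark~\ref{4rmk2} the reduction $\bar\lambda$ of any eigenvalue of $\mathscr{F}$ is an $h_{0}$-th root of unity in $k$, hence $\overline{\mathscr{F}}^{\,h_{0}}=(\bar\lambda\cdot\mathrm{id}+\nu)^{h_{0}}=\bar\lambda^{h_{0}}\cdot\mathrm{id}+(\text{a polynomial in }\nu\text{ with zero constant term})=\mathrm{id}+\mu$ on $\overline{\mathscr{C}}_{(\lambda)}$ with $\mu$ nilpotent. Since $\mathscr{F}$, hence $\overline{\mathscr{F}}^{\,h_{0}}-\mathrm{id}$, respects the direct sum decomposition, $\overline{\mathscr{F}}^{\,h_{0}}-\mathrm{id}$ is block-diagonal with nilpotent blocks, therefore nilpotent (its $m$-th power is block-diagonal with entries the $m$-th powers of the blocks). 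A fortiori its class in $\mathrm{End}_{K^{b}(kG\overline{b})}(\overline{\mathscr{C}})\simeq\mathrm{End}_{K^{b}(kG\overline{b})}\big(\overline{b}\Rgc(\Y,k)\big)$ is nilpotent, and by the first step this class is exactly the image of $F^{h}-1$; this gives the corollary.

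The argument is essentially formal once the chain-level eigenspace decomposition is available, so I do not expect a serious obstacle; the two points to be careful about are that $\mathscr{F}$ genuinely commutes with the eigenspace idempotents — this is what makes $\overline{\mathscr{F}}^{\,h_{0}}-\mathrm{id}$ block-diagonal, and hence lets ``nilpotent on each summand'' upgrade to ``nilpotent'' despite the possible off-diagonal morphisms between the $\overline{\mathscr{C}}_{(\lambda)}$ in $K^{b}(kG\overline{b})$ — and the input from Remark~\ref{4rmk2} that the eigenvalues of $F^{\delta}$ become $h_{0}$-th roots of unity after $\ell$-reduction, which is precisely where assumption $\mathrm{(S)}$ enters. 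Alternatively one could deduce the statement from Theorem~\ref{4thm1} and the Rickard--Linckelmann description of the endomorphism ring of the Rickard complex, under which $\mathrm{End}_{K^{b}(kG\overline{b})}(\overline{b}\Rgc(\Y,k))\simeq k[\T^{cF}\rtimes C_{W}(c\sigma)]\overline{b}$ and $F^{h}$ maps to an element projecting to $1$ in the commutative semisimple quotient $k^{h_{0}}$; but the direct eigenspace computation above avoids invoking the Morita picture.
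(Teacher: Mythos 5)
Your route is genuinely different from the paper's: the paper works entirely in the endomorphism algebra $A=\mathrm{End}_{K^{b}(\Lambda Gb)}(C)$, invokes Corollary~\ref{4cor2} (injectivity of $A\hookrightarrow \mathrm{End}^{\mathrm{gr}}_{\Lambda Gb}(\mathrm{H}^{\bullet}_c(\Y,\Lambda))$) plus assumption $(\mathrm{S})$ to conclude that the minimal polynomial $\chi$ of $F^{\delta}$ on $\Hc^{\bullet}(\Y,K)$ already annihilates the class of $F^{\delta}$ in $A$, and then reduces $\chi$ mod $\ell$ and uses Remark~\ref{4rmk2}; you instead decompose a finite chain-level model $\mathscr{C}$ into generalized $(\lambda)$-eigenspaces of $\mathscr{F}$ and argue nilpotence block by block. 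The paper's use of Corollary~\ref{4cor2} is precisely what lets it work only with eigenvalues on cohomology.

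That difference exposes a gap in your argument. Remark~\ref{4rmk2} controls the $\ell$-reduction of eigenvalues of $F^{\delta}$ on the \emph{cohomology} $\Hc^{\bullet}(\Y,K)$, but you apply it to eigenvalues of the transferred endomorphism $\mathscr{F}$ on the \emph{terms} of $\mathscr{C}$, and these need not coincide: an acyclic summand $\Lambda\xrightarrow{\ 1\ }\Lambda$ with $\mathscr{F}$ acting by any scalar $\mu$ contributes the eigenvalue $\mu$ on the terms but nothing on cohomology. Consequently $\overline{\mathscr{F}}^{h_{0}}-\mathrm{id}$ need \emph{not} be nilpotent as a chain endomorphism of $\overline{\mathscr{C}}$, contrary to what you claim in your ``second step'': on a summand $\overline{\mathscr{C}}_{(\lambda)}$ with $\bar\lambda^{h_{0}}\neq 1$ the endomorphism is $(\bar\lambda^{h_{0}}-1)\cdot\mathrm{id}+\text{nilpotent}$, which is invertible. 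The ``a fortiori'' passage from chain-level nilpotence to nilpotence in $K^{b}$ therefore rests on a false premise. The repair is not hard but must be made explicit: under $(\mathrm{S})$ each $\mathscr{C}_{(\lambda)}$ has torsion-free cohomology, so if $\bar\lambda$ is \emph{not} the reduction of an eigenvalue of $F^{\delta}$ on $\Hc^{\bullet}(\Y,K)$, then $\mathscr{C}_{(\lambda)}$ is acyclic over $\Lambda$, hence contractible (a bounded acyclic complex of projectives), hence can be discarded from $\mathscr{C}$ without changing the homotopy type or the class of $\mathscr{F}$. After this truncation, every remaining $\bar\lambda$ does come from cohomology, Remark~\ref{4rmk2} applies, and your block-diagonal nilpotence argument goes through and yields the corollary. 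Your closing remark that one could alternatively read the statement off the Rickard complex from Theorem~\ref{4thm1} is also sound, but is again a route different from the paper's.
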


\begin{proof} Let $A =  \mathrm{End}_{K^{b}(\Lambda G b)}\big(b\Rgc(\Y,\Lambda)\big)$. By Corollary \ref{4cor1}, the $\ell$-reduction of $A$ is exactly $\mathrm{End}_{K^{b}(k G \overline{b})}\big(\overline{b}\Rgc(\Y,k)\big)$. Let us denote by $\chi$ the minimal polynomial of $F^\delta$ on $\Hc^\bullet(\Y,K)$. By the previous corollary and the assumption $\mathrm{(S)}$,  the image of $\chi(F^\delta)$ in $A$ is zero. But the eigenvalues of $F^{\delta}$ reduce to $h_0$-th roots of unity modulo $\ell$ (see Remark \ref{4rmk2}) and hence the class of $F^h-1$ in $\overline{A}$ is a nilpotent element (recall that $h =h_0 \delta$).\end{proof}

\subsection{Brou\'e's conjecture\label{4se2}}

The original version of \emph{Brou\'e's abelian defect group conjecture} \cite{Bro1} predicts that the module categories of a block and its Brauer correspondent are derived equivalent, provided that the defect of the block is an abelian group. More precisely, given a block with abelian defect group $H$, represented by an idempotent $b$, and $c = \mathrm{Br}(b)$ the corresponding block of $N_G(H)$, there exists an equivalence

\centers{$ D^b(\Lambda G b$-$\mathrm{Mod}) \, \mathop{\longrightarrow}\limits^\sim  D^b(\Lambda N_G(H) c$-$\mathrm{Mod}).$}

\noindent Such an equivalence induces a \emph{perfect isometry} between the Grothendieck groups carrying numerous arithmetical information. There are indeed many numerical consequences we can deduce from it, e.g. it preserves the number of irreducible characters (ordinary and modular) as well as the similarity invariants of the Cartan matrix \cite{Bro2}. Up to now, the version of this conjecture is known to hold in the following cases:

\begin{itemize}

\item with restrictions on the defect group: if $H$ is a cyclic group \cite{Ri}, \cite{Lin1} and \cite{Rou1} or isomorphic to the Klein group $\mathbb{Z}/2 \mathbb{Z} \times \mathbb{Z}/2 \mathbb{Z}$  \cite{Ri2} and \cite{Lin2};

\item with restrictions on $G$: for $\ell$-solvable groups \cite{Da}, \cite{Pu1} and \cite{HaLi}, symmetric groups and general linear groups \cite{CR} or for finite reductive groups when $\ell$ divides $q-1$ \cite{Pu2}.

\end{itemize}

\noindent Many other particular cases have been handled, and a lot of evidences for this conjecture to hold have been collected.

\sk

It is unclear whether there should exist a canonical way to construct this equivalence. However, when $G$ is a finite reductive group, it is expected to be induced by the cohomology of certain Deligne-Lusztig varieties. This is known as the \emph{geometric version of Brou\'e's conjecture}, as  stated in \cite{BMa2} and \cite{BMi2}.  Note that varieties associated to Levi subgroups $-$ and not only tori $-$ can be involved in this description. However, if the order of $q$  modulo $\ell$ is assumed to be a regular number $d$, then it is sufficient to consider Deligne-Lusztig varieties $\Y(\dot  w)$ associated to elements that satisfy the following properties:

\begin{itemize}

\item[$\mathbf{(B1)}$] $w\sigma$ is a good \emph{$d$-regular element} \cite{BMi2}.
\item[$\mathbf{(B2)}$] $\ell$ divides $|T_w|$ but does not divide $[G: T_w]$.
\end{itemize}

\noindent We will also assume that the prime number $\ell$ is large:

\begin{itemize}

\item[$\mathbf{(B3)}$] $\ell$ does not divide $|W^F|$

\end{itemize} 

\noindent in order to ensure that $N_G(\T_w) = N_G(T_w)$.

\sk

In this geometric framework, the defect group $H$ of the principal block (which is an Sylow $\ell$-subgroup of $G$) can be chosen to be a subgroup of $T_w$, and the property of $w\sigma$ to be regular forces $C_\G(H)$ to be the exactly the torus $ \T_w$, leading to $N_G(\T_w)=N_G(H)$. Then the geometric version of Brou\'e's conjecture predicts that the perfect complex $b\Rgc(\Y(\dot w),\Lambda)$ induces a splendid Rickard equivalence (and in particular a derived equivalence) between the principal $\ell$-blocks of  $G$ and $N_G(T_w)$. More precisely,

\begin{conj}[Brou\'e]\label{4conj1}Under the previous assumptions, there exists a bounded complex $D$ of $\big(\Lambda G b, \Lambda N_G(T_w)\big)$-bimodules such that

\begin{enumerate} 
\item[$\mathrm{(i)}$] The restrictions of $b\Rgc(\Y(\dot w),\Lambda)$ and $D$ to the category of bounded complex of $(\Lambda G b,  \Lambda T_w)$-bimodules are homotopy equivalent.
\item[$\mathrm{(ii)}$] The complex $D$ induces a splendid Rickard equivalence between the  principal $\ell$-blocks of  $G$ and $N_G(T_w)$.

\end{enumerate}
\end{conj}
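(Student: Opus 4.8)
The plan is to prove Conjecture \ref{4conj1} in the Coxeter case ($w=c$, $d=h$) under assumption $\mathrm{(S)}$, taking for $D$ the complex $C:=b\Rgc(\Y(\dot c),\Lambda)$ itself, enriched with an extra right action of $N_G(T_c)$ coming from the Frobenius. Set $H=T_\ell\subseteq T_c=\T^{cF}$; by Proposition \ref{2prop1} and $\mathbf{(B3)}$ one has $C_G(H)=\T_c$, $N_G(H)=N_G(T_c)$ and $N_G(T_c)/T_c\simeq C_W(c\sigma)=\langle v\rangle$ with $v=cF(c)\cdots F^{\delta-1}(c)$ of order $h_0$, and $\ell>h\ge h_0$ since $\ell\equiv 1\pmod h$. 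Let $c'=\mathrm{Br}(b)$ be the principal block of $\Lambda N_G(T_c)$; by Examples \ref{2ex1} and \ref{2ex2} it is a Brauer star algebra with $h_0$ edges and exceptional multiplicity $(|H|-1)/h_0$, the cyclic ordering around the star being the one induced by $q^\delta$.

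\emph{First}, I would record that $C$ is a splendid tilting complex of $(\Lambda Gb,\Lambda T_c)$-bimodules. It is a tilting complex for $\Lambda Gb$ with $E:=\mathrm{End}_{K^b(\Lambda Gb)}(C)$ free over $\Lambda$ by Corollary \ref{4cor1}; working with the finite-type representative $\mathscr C$ of Corollary \ref{1cor1}, its terms are direct summands of permutation bimodules $\Lambda[(G\times T_c^{\mathrm{op}})/S]$ with $S$ an $\ell'$-group (Theorem \ref{1thm2}), hence $p$-permutation bimodules with vertex in $\Delta H$; and by Theorem \ref{4thm1}, assembled over all eigenvalues $\lambda_j$ of $F^\delta$, $C$ is homotopy equivalent to the Rickard complex of the Brauer tree $\Gamma$ of $\Lambda Gb$, so by \cite[Theorem 4.2]{Ri} the algebra $E$ is a Brauer star algebra with $h_0$ edges and multiplicity $(|H|-1)/h_0$. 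Condition $\mathrm{(i)}$ of the conjecture is then automatic for any $D$ obtained by adding structure to $C$.

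\emph{Second}, and this is the crux, I would upgrade the Frobenius into the missing right action. The right $T_c$-action on $\Y(\dot c)$ makes $\mathscr C$ a complex of $(\Lambda Gb,\Lambda T_c)$-bimodules, giving a $\Lambda$-algebra map $\Lambda T_c^{\mathrm{op}}\to E$; the Frobenius $\mathscr F=f\circ F^\delta\circ g$ normalises its image and induces $t\mapsto{}^{v}t$. \textbf{The main obstacle} is that by Remark \ref{1rmk1} $\mathscr F$ is only defined up to homotopy and has infinite order, whereas $C_W(c\sigma)$ is finite; both difficulties are dissolved by the rigidity of $C$. Since $\mathrm{Hom}_{K^b(\Lambda Gb)}(C,C[i])=0$ for $i\neq 0$ and $E$ is $\Lambda$-free (Corollary \ref{4cor1}), $\mathrm{Aut}_{K^b(\Lambda Gb)}(C)=E^{\times}$; by Corollary \ref{4cor3} the class of $F^{h}=\mathscr F^{h_0}$ in $E^{\times}$ is unipotent; and $F^{h}$ centralises $T_c$ because $v^{h_0}=1$. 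A correction argument (Schur--Zassenhaus in the appropriate subgroup of $E^{\times}$, using $\gcd(h_0,\ell)=1$, followed by Hensel lifting of the resulting $\ell'$-element) then replaces $\mathscr F$ by a genuine order-$h_0$ automorphism $\mathscr F_0$ of $C$ still inducing $t\mapsto{}^{v}t$; together with $\Lambda T_c^{\mathrm{op}}\to E$ and a rational representative of $v$ this yields an algebra map $\Lambda N_G(T_c)^{\mathrm{op}}c'\to E$, which one checks to be an isomorphism (both sides being Brauer star algebras with the same planar embedded tree and multiplicity, and the map carrying the block idempotent to the identity of $E$). Transporting this isomorphism back equips $C$ with the structure of a complex $D$ of $(\Lambda Gb,\Lambda N_G(T_c)c')$-bimodules restricting to $C$ over $T_c$, whose terms stay $p$-permutation bimodules with vertex in $\Delta H$ since induction along $T_c\hookrightarrow N_G(T_c)$ preserves permutation modules.

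\emph{Finally}, $D$ is a two-sided splendid tilting complex with $\mathrm{End}_{K^b}(D)\simeq\Lambda N_G(T_c)c'$, hence a splendid Rickard complex: $D\otimes_{\Lambda N_G(T_c)c'}D^{\vee}\simeq\Lambda Gb$ and $D^{\vee}\otimes_{\Lambda Gb}D\simeq\Lambda N_G(T_c)c'$ in the two homotopy categories, by Rickard's theory of tilting complexes over symmetric algebras \cite{Ri},\cite{Ri3}. Equivalently, one may reduce modulo $\ell$: by $\mathrm{(S)}$ and the explicit description of the boundary maps at the end of Section \ref{4se1}, $\overline D$ is precisely the Rickard complex realising the known cyclic-defect derived equivalence between the principal $\ell$-blocks of $kG$ and $kN_G(T_c)$ \cite{Ri},\cite{Lin1}, and it lifts uniquely to $D$ over $\Lambda$ by \cite[Proposition 3.1 and Theorem 3.3]{Ri3}. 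This establishes $\mathrm{(ii)}$, and hence Conjecture \ref{4conj1} in the Coxeter case under assumption $\mathrm{(S)}$.
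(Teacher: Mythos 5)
Your high-level strategy matches the paper's: having established that $C=b\Rgc(\Y(\dot c),\Lambda)$ is a tilting complex (Corollary~\ref{4cor1}) and that $F^h-1$ acts nilpotently on $\overline C$ up to homotopy (Corollary~\ref{4cor3}), you use the Frobenius to extend the right $T_c$-action on $C$ to an action of $N_G(T_c)$ and then invoke Rickard's cyclic-defect machinery. The paper deduces Theorem~\ref{4thm3} from those two corollaries plus a citation of \cite[Theorem~4.5 and Lemma~4.9]{Rou}; you instead try to re-derive that result inline, via a Schur--Zassenhaus and Hensel correction inside $E^{\times}$ and a direct identification of $E=\mathrm{End}_{K^b(\Lambda Gb)}(C)$ with $\Lambda N_G(T_c)^{\mathrm{op}}c'$.

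The genuine gap is in that identification. You assert that the algebra map $\Lambda N_G(T_c)^{\mathrm{op}}c'\to E$ ``one checks to be an isomorphism,'' offering only that both sides are Brauer star algebras of the same type. That yields an \emph{abstract} isomorphism of algebras, not that \emph{your particular} map is one; you would still need surjectivity (and then injectivity follows from a $\Lambda$-rank count). Surjectivity is exactly the nontrivial input the paper delegates to \cite[Theorem~4.5]{Rou} --- the paper flags this explicitly: ``$\phi$ \ldots turns out to be surjective (for more details see the proof of \cite[Theorem 4.5]{Rou}).'' A second, smaller weakness: your reason that the terms of $D$ remain $p$-permutation with vertex in $\Delta H$ (``induction along $T_c\hookrightarrow N_G(T_c)$ preserves permutation modules'') does not apply as stated, since $D$ is not constructed by induction --- it is the \emph{same} $\Lambda$-module with an extended action. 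What actually saves the claim is that $\ell\nmid[N_G(T_c):T_c]$, so each term is a direct summand of its induction-restriction along $T_c\hookrightarrow N_G(T_c)$, and the latter is $p$-permutation because the restriction is. Neither issue reflects a wrong idea --- both are precisely the content of the part of \cite{Rou} you are trying to reprove --- but as written the argument does not stand on its own without either filling these in or simply citing Rouquier as the paper does.
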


\begin{rmk} In the case where $w$ is not assumed to be good, the disjunction property of the cohomology 
\begin{equation}\label{disjeq} \forall\,  i \neq 0 \quad \mathrm{Hom}_{D^b(\Lambda Gb)}\big(b\Rgc(\Y(\dot w),\Lambda),b\Rgc(\Y(\dot w),\Lambda)[-i]\big) = 0
\end{equation}
\noindent does not always hold. When $(\G,F)$ has no twisted components of type ${}^2$B$_2$, ${}^2$F$_4$ ou ${}^2$G$_2$, the other assumptions on $w$ tell us  that $(\T_w,1)$ is a $d$-cuspidal pair, and \cite[Theorem 5.24]{BMM} turns out to be nothing but the numerical reflect of this conjecture.
\end{rmk}

Among the elements $w$ satisfying the previous assumptions, Coxeter elements of $(W,F)$ play a particular role, for they have the following remarkable property:

\begin{itemize}

\item[$\mathbf{(B4)}$]  $C_W(w\sigma)$ is a cyclic group generated by $wF(w)\cdots F^{\delta -1}(w)$.

\end{itemize}

\noindent In this case, the action of the Frobenius $F^\delta$ provides a natural way to extend the action of $T_w \simeq \T^{wF}$ on the complex $b\Rgc(\Y(\dot w),\Lambda)$ to an action of the normalizer $N_G(T_w)$. Using this crucial observation, Rouquier has reduced Brou\'e's conjecture to the disjunction property \ref{disjeq} for the cohomology with coefficients in $k$ \cite{Rou}. We know from the previous section that this  property is satisfied whenever $w$ is a Coxeter element, so that Conjecture \ref{4conj1} can now be directly deduced from Corollary \ref{4cor1} and \cite[Theorem 4.5]{Rou}:

\begin{thm}\label{4thm3}Under the assumption $\mathrm{(S)}$, the geometric version of Brou\'e's conjecture  holds for Coxeter elements.
\end{thm}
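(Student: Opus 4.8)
\section*{Proof proposal for Theorem \ref{4thm3}}

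The plan is to deduce the theorem from the structural results already assembled, following the route indicated just after the statement: combine Corollary \ref{4cor1} with Rouquier's reduction of the geometric Brou\'e conjecture to a disjunction property for the cohomology modulo $\ell$, \cite[Theorem 4.5]{Rou}. First I would record the features that make the Coxeter case special. A Coxeter element $c$ of $(W,F)$ satisfies $(\mathbf{B1})$ and $(\mathbf{B2})$ by Theorem \ref{2thm1} and the setup of Section \ref{2se1}, $(\mathbf{B3})$ is our standing hypothesis, and crucially $(\mathbf{B4})$ holds by Theorem \ref{2thm1}$(\mathrm{iv})$: $C_W(c\sigma)$ is cyclic, generated by $v = cF(c)\cdots F^{\delta-1}(c)$, so that $N_G(T_c) = T_c \rtimes C_W(c\sigma)$. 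The Frobenius $F^\delta$ commutes with the left $G$-action on $\Y = \Y(\dot c)$ and acts on $\T^{cF}$ through $v$; hence on the finite representative $\mathscr{C}$ of $C = b\Rgc(\Y,\Lambda)$ provided by Corollary \ref{1cor1} (terms projective as left $\Lambda G$- and as right $\Lambda\T^{cF}$-modules) the endomorphism $\mathscr{F} = f\circ F^\delta\circ g$ of Section \ref{4se1} promotes, at least up to homotopy, the $(\Lambda G b, \Lambda T_c)$-bimodule structure to a $(\Lambda G b, \Lambda N_G(T_c))$-bimodule structure.

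Next I would verify that the hypotheses of \cite[Theorem 4.5]{Rou} are met. Corollary \ref{4cor1} says $C$ is a tilting complex for $\Lambda G b$, so the disjunction property \ref{disjeq} holds over $\Lambda$ and, by $\ell$-reduction (the proof of Corollary \ref{4cor1} shows the cohomology of $\overline{E} = \mathrm{Hom}^\bullet_{\Lambda G}(C,C)\otimes_\Lambda k$ vanishes outside degree $0$), also over $k$ --- which is the form Rouquier needs. The same corollary gives that $\mathrm{End}_{K^b(\Lambda G b)}(C)$ is free over $\Lambda$ and homotopy equivalent, as a complex of $(\Lambda\T^{cF},\Lambda\T^{cF})$-bimodules, to $\mathrm{Hom}^\bullet_{\Lambda G b}(C,C)$; via Corollary \ref{4cor2} this endomorphism algebra embeds into the graded endomorphism algebra of $\Hc^\bullet(\Y,\Lambda)$, on which $F^\delta$ acts genuinely. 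Finally the splendidness input is Theorem \ref{1thm2}: since $\T^{cF}$ acts freely on $\Y$, the terms of $\mathscr{C}$ are direct summands of permutation modules for $G\times(\T^{cF})^{\mathrm{op}}$ on $\ell'$-stabilizers. Feeding all of this into \cite[Theorem 4.5]{Rou} produces a bounded complex $D$ of $(\Lambda G b, \Lambda N_G(T_c))$-bimodules whose restriction to $(\Lambda G b, \Lambda T_c)$-bimodules is homotopy equivalent to $C$ and which induces a splendid Rickard equivalence between the principal $\ell$-blocks of $G$ and $N_G(T_c)$; this is exactly Conjecture \ref{4conj1}.

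The hard part, and the only place where something beyond bookkeeping happens, is the passage from the ``action up to homotopy'' of $F^\delta$ (recall Remark \ref{1rmk1}) to a genuine two-sided complex of $N_G(T_c)$-bimodules compatible with the splendid structure. This is precisely the content of Rouquier's Theorem 4.5, whose proof exploits both the cyclicity of $C_W(c\sigma)$ (i.e.\ property $(\mathbf{B4})$) and the disjunction property over $k$ to rigidify the homotopy action; I would therefore not reprove it, but would make explicit that assumption $(\mathrm{S})$ enters \emph{only} through Corollary \ref{4cor1} --- to obtain the disjunction property and the $\Lambda$-freeness of the endomorphism ring --- while everything else (property $(\mathbf{B4})$, the $\ell'$-stabilizer condition giving perfectness and splendidness, the very construction of $\mathscr{C}$ and $\mathscr{F}$) is unconditional in the Coxeter case.
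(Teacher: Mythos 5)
Your proposal is correct and follows the same route as the paper: both deduce the theorem by verifying that Corollary \ref{4cor1} furnishes the disjunction property over $k$ (and the $\Lambda$-freeness of the endomorphism ring) and then invoking Rouquier's \cite[Theorem 4.5]{Rou}, whose hypotheses are met because $C_W(c\sigma)$ is cyclic. The paper additionally unpacks the interior of Rouquier's argument (the Hensel-lemma deformation of $\phi$ to $\psi$, using Corollary \ref{4cor3}) as an optional explanation of how the actions of $v$ and $F^\delta$ are reconciled, but --- as you correctly note --- that is commentary rather than a logically necessary step, and assumption $(\mathrm{S})$ indeed enters only through Corollary \ref{4cor1}.
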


Nevertheless, it is worth giving details of the proof of \cite[Theorem 4.5]{Rou} since it helps to understand how the actions of $v = cF(c) \cdots F^{\delta -1}(c)$ and $F^{\delta}$  on the cohomology of $\Y$ are related.

\sk

Let us consider the algebra $A = \mathrm{End}_{K^b(\Lambda G b )}(C)$ associated to the complex $C = b \Rgc(\Y,\Lambda)$. By the disjunction property, $A$ is homotopy equivalent to $\mathrm{End}_{\Lambda G b}^\bullet(C)$ as a complex of $(\Lambda \T^{cF}, \Lambda \T^{cF})$-bimodules (see Corollary \ref{4cor1}). Moreover, it is free over $\Lambda$ and thus satisfies 

\centers{$ \overline{A} \, = \, A \otimes_\Lambda k \, \simeq \,  \mathrm{End}_{K^b(k G \overline{b})}(\overline{C})$.}

\noindent Since the action of $F^\delta$ on $C$ commutes with the action of $G$, we get a canonical morphism $\phi : \Lambda \T^{cF} \rtimes \langle F^\delta \rangle_{\mathrm{mon}} \longrightarrow A$  which turns out to be surjective (for more details see the proof of \cite[Theorem 4.5]{Rou}). We will denote by $\tau$ the image of $F^h$ and by  $\langle \tau \rangle$ the subalgebra of $A$ that it generates. By Corollary \ref{4cor3}, $\overline \tau -1 $ is a nilpotent element of $\overline A$. Therefore,  we can apply Hensel's lemma to the ideal $\mathfrak{m} = \mathrm{Nil}\langle \tau \rangle+ \ell \langle \tau \rangle$ in order to obtain a element $\alpha \in \langle \tau \rangle$ such that $\alpha^{h_0} = \tau$ and $\overline{\alpha} - 1 $ is nilpotent (recall that $h$ is prime to $\ell$). 
Then we can deform  $\phi$ into a homomorphism of algebras

\centers{$\psi \, : \, \Lambda \T^{cF} \rtimes C_W(c\sigma) \longrightarrow A$}

\noindent by setting $\psi(v^{-1}) = \alpha^{-1} \phi(F^{\delta})$. This defines an action of $v$ on $C$ in the homotopy category such that the image of $v^{-1}- F^\delta$ in $\overline{A}$ is a nilpotent element. To conclude, we use \cite[Lemma 4.9]{Rou} to construct a complex $D$ of 
$(\Lambda G b,\Lambda \T^{cF} \rtimes C_W(c\sigma))$-bimodules and a homotopy equivalence $f$ between the restrictions of $C$ and $D$ to the category of complexes of $(\Lambda G b ,\Lambda \T^{cF} )$-bimodules such that the following diagram is commutative:

\centers{$\begin{psmatrix} \Lambda \T^{cF} \rtimes C_W(c\sigma) & \mathrm{End}_{K^b(\Lambda G b)} (C) \\ &   \mathrm{End}_{K^b(\Lambda G b)} (D) 
\psset{arrows=->,nodesep=5pt} 
\everypsbox{\scriptstyle} 
\ncline{1,1}{1,2}^{\psi}    
\ncline{1,1}{2,2}_{\mathrm{can}} 
\ncline{<->}{1,2}{2,2}>{f}  
\end{psmatrix}$}

\noindent We deduce that the functor $D^\vee \otimes_{\Lambda G} - $ induces the expected splendid Rickard  equivalence between the principal $\ell$-blocks of $G$ and $\T^{cF} \rtimes C_W(c\sigma)$.

\begin{rmk} The projection $N_G(T_c) \twoheadrightarrow C_W(c\sigma)$ given by Proposition \ref{2prop1} does not split in general, and the groups $N_G(T_c)$ and $\T^{cF} \rtimes C_W(c\sigma)$ are not isomorphic. However, it is proven in \cite{Rou} that the algebras $\Lambda N_G(T_c) $ and $\Lambda \T^{cF} \rtimes C_W(c\sigma)$ become isomorphic as soon as $\ell$ satisfies the assumptions given at the beginning of Section \ref{2se2}.
\end{rmk}

In the remaining sections, we shall investigate further properties of the functor $D^\vee \otimes_{\Lambda G} -$ using explicit representatives coming from Section \ref{4se1}.

\subsection{Planar embedding of the Brauer tree\label{4se3}}

We start by constructing a representative $\mathscr{D}$ of $D$ with good finiteness properties, as we did for $C = b\Rgc(\Y,\Lambda)$. The restriction of $D$ to the category of complexes of $(\Lambda G, \Lambda \T^{cF})$-bimodules is homotopy equivalent to $C$, and hence to $\mathscr{C}$ which is a bounded complex of projective modules. 

\sk

It remains to define the action of $C_W(c\sigma)$ on this complex using the action of $v$ on $D$. By transfer, there exists an endomorphism $\widetilde v$ of $\mathscr{C}$ such that $v$ and $\widetilde v$ coincide under the isomorphism $\mathrm{End}_{K^b(\Lambda G)} (D) \simeq  \mathrm{End}_{K^b(\Lambda G)} (\mathscr{C})$. Note that there is no canonical choice for $\widetilde v$, since it depends on the homotopy equivalence we choose between $\mathscr{C}$ and $D$. The image of $\widetilde v$ under this isomorphism has order $h_0$, which means that there exists a null-homotopic endomorphism $n$ of $\mathscr{C}$ such that $\widetilde{v}^{h_0}=1+n$. We can then argue as in \cite{Bon}: we consider the complex $\mathscr{D}$ obtained from $\mathscr{C}$ by removing any direct summand homotopy equivalent to zero. Since $\ell > h_0$, one can extract an $h_0$-th root of $1+n$ using the power series $\sqrt[h_0]{1+X}$. In other words, there exists a null-homotopic endomorphism $n'$ of $\mathscr{D}$ such that $(1+n')^{h_0} = 1+n$. This allows us to define the action of $v \in C_W(c\sigma)$ on $\mathscr{D}$ to be the endomorphism $\mathscr{V} = (1+n')^{-1} \widetilde{v}$. 

\sk

In summary, we have constructed a bounded complex $\mathscr{D}$ of finitely generated $(\Lambda G,\Lambda \T^{cF}\rtimes C_W(c\sigma))$-bimodules such that:

\begin{itemize}

\item $\mathscr{D}$ is a bounded complex whose terms are  projective as both left and right modules (since the order of $C_W(c\sigma)$ is invertible in $\Lambda$);

\item the restrictions of $\mathscr{D}$ and $C=b\Rgc(\Y,\Lambda)$ to the category of  $(\Lambda G, \Lambda \T^{cF})$-bimodules are homotopy equivalent;

\item under the isomorphism $\mathrm{End}_{K^b(k G)}(\overline{\mathscr{D}}) \simeq \mathrm{End}_{K^b(k G)}(\overline{C})$, the images of $v^{-1}$ and $F^\delta$ differ only by a nilpotent element.

\end{itemize}

\noindent In particular, we can identify the generalized $(\lambda)$-eigenspaces of $v^{-1}$ and $F^\delta$ on the cohomology groups of $\Y$. This means that the pair $(\mathscr{D},\mathscr{V})$ has the same properties as the pair $(\mathscr{C}, \mathscr{F})$ that are required in the proof of Theorem \ref{4thm1}. Consequently, for all $j= 0, \ldots, h_0 -1$, the generalized $(q^{-j\delta})$-eigenspace $\mathscr{D}_j$ of $\mathscr{V}$ on $\mathscr{D}$ is homotopy equivalent to the following complex of $\Lambda G$-modules:

\centers{$  0 \longrightarrow P _{m_\zeta} \longrightarrow{P}_{m_\zeta+1} \longrightarrow \cdots \longrightarrow {P}_{j-1} \longrightarrow {P}_j \longrightarrow 0$}

\noindent where  $\zeta$ stands for $\zeta_j$. Using this particular representative, we can now determine the planar embedding of the Brauer tree:

\begin{thm}\label{4thm2}Under the assumption $\mathrm{(S)}$, Conjecture \hyperref[2conj2]{\emph{(HLM+)}}  holds.
\end{thm}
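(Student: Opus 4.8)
The plan is to deduce Conjecture (HLM+) from the explicit computation of the complexes $\mathscr{D}_j$ carried out just above, together with the structure of the Rickard equivalence $D^\vee\otimes_{\Lambda G}-$ from Section \ref{4se2}. The key point is that $\mathscr{D}$ is a complex of $(\Lambda G,\Lambda\T^{cF}\rtimes C_W(c\sigma))$-bimodules inducing a splendid Rickard equivalence between the principal blocks of $G$ and $H=\T^{cF}\rtimes C_W(c\sigma)$; restricting the right action to the subalgebra generated by $C_W(c\sigma)$ and decomposing $\mathscr{D}$ into generalized $(q^{-j\delta})$-eigenspaces $\mathscr{D}_j$ of $\mathscr{V}$, we have just shown $\mathscr{D}_j$ is homotopy equivalent to the Rickard complex $0\to P_{m_\zeta}\to\cdots\to P_j\to 0$ attached to the node $\chi_j$. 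On the other side of the equivalence, Example \ref{2ex2} gives the planar embedded Brauer tree of $H$ explicitly: the exceptional characters $\eta_j$ are numbered so that $\eta_j(x)=\zeta^j$ where $x=(cF(c)\cdots F^{\delta-1}(c))^{-1}$ acts on $\T^{cF}$ via $F^\delta$, hence by raising to the power $q^\delta$, and $\zeta$ is the unique $m$-th root of unity ($m=h_0$) in $\Lambda^\times$ congruent to $q^\delta$ modulo $\ell$. So the cyclic order around the exceptional node of the star for $H$ is precisely the order induced by the powers of the $\ell$-reduction of $q^\delta$.

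The first step is therefore to make precise the dictionary "edges of $\Gamma$ $\leftrightarrow$ indecomposable summands of $\mathscr{D}$ with prescribed right $C_W(c\sigma)$-eigenvalue". Since the equivalence is splendid and compatible with the right $\Lambda H$-action, the image of the simple $kH$-module $k_j$ (attached to $\eta_j$) under the derived equivalence is a complex whose class in $K_0$ is determined, and matching total characters shows this complex is $\mathscr{D}_j$ up to shift; equivalently, the indecomposable projective $\Lambda G b$-module labelling the edge "between $\chi_{j-1}$ and $\chi_j$" (respectively "between $\chi_{m_\zeta}$ and the exceptional node") is recovered as $P_j$, and the right $\mathscr{V}$-action, i.e. the action of $v^{-1}$, acts on the relevant eigenspace by the scalar $q^{-j\delta}$ modulo $\ell$ (this is exactly Remark \ref{4rmk2} together with the identification of $(\lambda_j)$-eigenspaces of $v^{-1}$ and $F^\delta$). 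Next I would transport the planar structure: the planar embedding of a Brauer tree is an invariant of the Morita/derived equivalence class of the block together with the labelling of edges by simple modules (it is encoded by the $\mathrm{Ext}^1$-quiver of the block, as recalled in Section \ref{2se3}), so the cyclic order around the exceptional node of $\Gamma$ is the image under the equivalence of the cyclic order around the exceptional node of the star of $H$. Reading off the latter from Example \ref{2ex2} and using that $x$ acts as $q^\delta$, we conclude that the cyclic order around the exceptional node of $\Gamma$ is the one induced by $1,q^\delta,q^{2\delta},\dots,q^{(h_0-1)\delta}$ (mod $\ell$), which is exactly the assertion of (HLM+).

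Concretely I would organize the argument as: (1) recall that by Theorem \ref{4thm3} and its proof, $D^\vee\otimes_{\Lambda G}-$ sends $\Lambda G b$ to $\Lambda H c$, is splendid, and intertwines the $F^\delta$-action on the $G$-side with the $v^{-1}$-action on the $H$-side; (2) for each $j$, identify the indecomposable projective $\Lambda H c$-module $Q_j$ corresponding (under this equivalence, via the tilting complex $\mathscr{D}$) to $P_j$, and observe from the explicit description of $\mathscr{D}_j$ above that $Q_j$ is the projective cover of $k_j$ with $v^{-1}$ acting on the head by $q^{-j\delta} \bmod \ell$; (3) invoke Example \ref{2ex2}: on the $H$-side the edge $k_j$ sits in the star in position determined by $\eta_j(x)=\zeta^j$, and since $x=v^{-1}$ and $\zeta\equiv q^\delta$, the anticlockwise order of edges around the exceptional node is the order of the $\ell$-reductions of $q^{j\delta}$; (4) since a derived equivalence respecting the labelling of edges by simple modules preserves the planar embedding (equivalently, preserves $\mathrm{Ext}^1_{kG}(S_i,S_{i'})\neq 0$), this cyclic order transports to $\Gamma$, giving (HLM+). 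The main obstacle I anticipate is step (4): one must be careful that the equivalence really does respect the planar structure with the correct labelling — a derived equivalence need not preserve simple modules, but here the structure of the tilting complex (Rickard complex attached to the tree, all terms one or two indecomposable projectives) forces the simple modules on the two sides to correspond, and a splendid equivalence between cyclic-defect blocks is known to match planar embeddings; assembling this carefully, while keeping track of the direction of the cyclic order (anticlockwise versus clockwise, and the inverse in $x=(cF(c)\cdots F^{\delta-1}(c))^{-1}$), is the delicate bookkeeping the proof must handle.
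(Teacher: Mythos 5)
Your overall strategy---transport the planar embedding across the splendid equivalence $D^\vee\otimes_{\Lambda G}-$ and read it off on the side of $H=\T^{cF}\rtimes C_W(c\sigma)$ using Example \ref{2ex2}---is the right one, and matches the spirit of the paper. But there is a genuine gap exactly where you flag it, at your step (4), and the general principle you invoke to close it is false.

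You claim that the equivalence ``preserves $\mathrm{Ext}^1_{kG}(S_i,S_{i'})\neq 0$'' and that ``a splendid equivalence between cyclic-defect blocks is known to match planar embeddings.'' Neither holds: a derived equivalence does not preserve $\mathrm{Ext}^1$ between simple modules, because it does not send simple modules to simple modules. Indeed the whole point of the Rickard complex is that it changes the Brauer tree (the tree of $kGb$ is unfolded into the star of $kHc$), so the planar embeddings cannot simply ``match.'' Concretely, the image of $S_{m_\zeta}$ under $\overline{\mathscr{D}}^\vee\otimes_{kG}-$ is \emph{not} $k_{m_\zeta}$; it is (up to shift by $r$) a uniserial $kH$-module $N_{m_\zeta}$ whose composition factors are $k_{m_\zeta},k_{m_\zeta+1},\ldots,k_{M_\zeta}$. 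This is precisely the information your argument skips. The paper's proof computes $\overline{\mathscr{D}}^\vee\otimes_{kG}S_{m_\zeta}$ term by term, using that $\mathrm{Hom}_{kG}(\overline{P}_i,S_{m_\zeta})=0$ for $i\neq m_\zeta$ and the eigenspace decomposition $\mathscr{D}=\bigoplus_j\mathscr{D}_j$, to obtain $\overline{\mathscr{D}}^\vee\otimes_{kG}S_{m_\zeta}\simeq N_{m_\zeta}[-r]$ with the stated restriction to $kC_W(c\sigma)$.

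Once one has this, the correct transport is $\mathrm{Ext}^1_{kG}(S_{m_\zeta},S_{m_\xi})\simeq\mathrm{Ext}^1_{kH}(N_{m_\zeta},N_{m_\xi})$, \emph{not} $\mathrm{Ext}^1_{kH}(k_{m_\zeta},k_{m_\xi})$. The latter Ext-group is nonzero only when $j\equiv i+1\pmod{h_0}$ for some composition factors $k_i$ of $N_{m_\zeta}$ and $k_j$ of $N_{m_\xi}$, which forces $i=M_\zeta$ and $j=m_\xi$, i.e.\ $m_\xi\equiv M_\zeta+1$. That gives the planar embedding. So the missing idea in your proposal is the explicit determination of $N_{m_\zeta}$ as a length-$(M_\zeta-m_\zeta+1)$ uniserial module; without it, the appeal to preservation of $\mathrm{Ext}^1$ between simples is not available, and the argument does not go through as written.
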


\begin{proof} It is sufficient to show that the condition $\mathrm{Ext}_{kG}^1(S_{m_\zeta},S_{m_\xi}) \neq 0$ forces $m_\xi$ to be congruent to $M_\zeta +1$ modulo $h_0$. Using the formalism of derived categories, we shall write this group as

\centers{$ \mathrm{Ext}_{kG}^1(S_{m_\zeta},S_{m_\xi}) \, \simeq  \, \mathrm{Hom}_{D^b(kG)}(S_{m_\zeta},S_{m_\xi}[1])$}

\noindent which in turn is isomorphic to

\centers{$ \mathrm{Hom}_{D^b\big(k\T^{cF}\rtimes C_W(c\sigma)\big)}\big(\overline{\mathscr{D}}^\vee \otimes_{k G} S_{m_\zeta},\overline{\mathscr{D}}^\vee \otimes_{k G} S_{m_\xi}[1]\big)$}

\noindent via the fully-faithful functor $\overline{\mathscr{D}}^\vee \otimes_{k G} -$. Now, for $i \neq m_\zeta$, the module $\overline{P_i}^\vee \otimes_{kG} S_{m_\zeta} = \mathrm{Hom}_{kG}(\overline{P}_i,S_{m_\zeta})$ is zero. Consequently, we can use the action of $v$ to obtain the following isomorphisms in the category $K^b(kC_W(c\sigma)$-$\mathrm{mod})$: 

\centers{$ \overline{\mathscr{D}}^\vee \otimes_{k G} S_{m_\zeta} \, \simeq \, \displaystyle \bigoplus_{j=0}^{h_0-1} \overline{\mathscr{D}}_j^\vee \otimes_{k G} S_{m_\zeta} \,  \simeq \, \displaystyle \bigoplus_{j=m_\zeta}^{M_\zeta} \overline{\mathscr{D}}_j^\vee \otimes_{k G} S_{m_\zeta}  \, \simeq \, \bigoplus_{j=m_\zeta}^{M_\zeta} k_j[-r]$}

\noindent where $k_j$ is the one-dimentional simple $kC_W(c\sigma)$ module on which $v$ acts by multiplication by $\overline{q}^{-j\delta}$. Note that $v$ acts on $\T^{cF}$ by raising any element to the power of $q^{-\delta}$ so that this notation is consistent to the one given in Example \ref{2ex2}. From the previous isomorphism we deduce that the complex $ \overline{\mathscr{D}}^\vee \otimes_{k G} S_{m_\zeta}$ is quasi-isomorphic to a $k\T^{cF}\rtimes C_W(c\sigma)$-module $N_{m_\zeta}$ concentrated in degree $r$ satisfying

\centers{$ \mathrm{Res}_{kC_W(c\sigma)}^{k\T^{cF}\rtimes C_W(c\sigma)}\big(N_{m_\zeta}\big) \, \simeq \,  k_{m_\zeta} \oplus k_{m_\zeta+1} \oplus \cdots \oplus k_{M_\zeta}.$}

\noindent This shows that the composition factors of $N_{m_\zeta}$ are exactly $k_{m_\zeta}, \ldots, k_{M_\zeta}$. 

\sk

On the other side, the group $\mathrm{Ext}_{k\T^{cF} \rtimes C_W(c\sigma)}^1(k_i,k_j)  $ is non-zero if and only if  $j \equiv i+1$ modulo $h_0$ (see Example \ref{2ex2}). Since

\centers{$ \mathrm{Ext}_{kG}^1(S_{m_\zeta},S_{m_\xi}) \, \simeq  \, \mathrm{Ext}_{k\T^{cF} \rtimes C_W(c\sigma)}^1(N_{m_\zeta},N_{m_\xi})$}

\noindent we deduce that this latter group is non-zero only if there exist integers  $i \in \intn{m_\zeta}{M_\zeta}$ and $j \in \intn{m_\xi}{M_\xi}$ such that   $j \equiv i+1$ modulo $h_0$. We conclude by observing that the only integers that can satisfy this condition are $i = M_\zeta$ and $j=m_\xi$. 
\end{proof}

\begin{rmk} One can be more precise about the module $N_{m_\zeta}$. Being the image of an indecomposable, it is itself an indecomposable $k\T^{cF}\rtimes C_W(c\sigma)$-module. Moreover, since its composition factors are $k_{m_\zeta}, \ldots, k_{M_\zeta}$, it is necessarily uniserial and of the following shape:

\centers{$N_{m_\zeta} \, = \, \hskip-1.3mm\begin{array}{c} k_{M_\zeta} \\[3pt] k_{M_\zeta -1} \\[3pt] \vdots  \\[3pt] k_{m_\zeta} \\ \end{array} \hskip-1.3mm \ \ \cdot$}

\noindent Note however that there is still an  ambiguity for groups of type A$_1$.
\end{rmk}

\begin{rmk} For groups of $\mathbb{F}_q$-rank $1$ such  as the Ree group of type ${}^2 $G$_2$, the assumption $\mathrm{(S)}$ is automatically satisfied (the corresponding Deligne-Lusztig variety is an irreducible affine curve). As a consequence, the planar embedded Brauer tree of the principal $\ell$-block when $q$ has ordre $12$ modulo $\ell$ is exactly the one given in Figure \ref{fig3}. This completes \cite[Theorem 4.4]{Hiss}.

\sk

Furthermore, we shall prove in a subsequent paper that the assumption $\mathrm{(S)}$ holds whenever conjecture \hyperref[2conj1]{(HLM)} holds and $p$ is assumed to be a good prime number \cite{Du3}. In particular, from the knowledge of the shape of the Brauer tree we can deduce the planar embedding. We obtain therefore the planar embedding for groups of type F$_4$ when $p \neq 2,3$, which completes \cite[Theorem 2.1]{HL}. This would also apply to groups of type E$_7$ and E$_8$ if the shapes of the trees were known.

\end{rmk}

\subsection{Perverse equivalence and decomposition matrix\label{4se4}}

To finish off with the principal $\ell$-block, we observe that the equivalence induced by the cohomology of $\Y$ is perverse in the sense of \cite{CR1}. This leads to a conceptual proof of the unitriangularity shape of the decomposition matrix of the block.

\sk

Let us fix some notation for the objects involved in the definition. Recall that a full subcategory $\mathscr{B}$ of an abelian category $\mathscr{A}$ is a \emph{Serre subcategory} if for all short exact sequence

\centers{$0 \longrightarrow A \longrightarrow B \longrightarrow C \longrightarrow 0$} 

\noindent in $\mathscr{A}$, $B$ is an object of $\mathscr{B}$ if and only if both $A$ and $C$ are objects of $\mathscr{B}$. In other words, the category $\mathscr{B}$ is stable by quotients, submodules, and extensions by objects of $\mathscr{B}$.

\sk

Given such a category, we will denote by $D_\mathscr{B}^b(\a)$ the full subcategory of $D^b(\a)$ of complexes whose cohomology groups are objects of $\mathscr{B}$. We can then form the quotient categories $\a/\b$ (abelian) and  $D^b(\a)/D_{\mathscr{B}}^b(\a)$ (triangulated). 

\begin{de} Let $\a$ and $\a'$ be two abelian categories, $\mathscr{S}$ and $\mathscr{S}'$ the sets of isomorphism classes of simple objects of these categories. A derived equivalence $\Theta\, : \, D^b(\a) \mathop{\longrightarrow}\limits^\sim D^b(\a')$ is \emph{perverse} if there exist

\begin{itemize} 

\item filtrations $\emptyset = \mathscr{S}_0 \subset \mathscr{S}_1 \subset \cdots \subset \mathscr{S}_r = \mathscr{S}$ and  $\emptyset = \mathscr{S}_0' \subset \mathscr{S}_1' \subset \cdots \subset \mathscr{S}_r' = \mathscr{S}'$ of the sets $\mathscr{S}$ and $\mathscr{S}'$;

\item a perversity function $p : \intn{0}{r} \longrightarrow \mathbb{Z}$;

\end{itemize}

\noindent such that if $\a_i$ (resp. $\a_i'$) denotes the Serre subcategory generated by  $\mathscr{S}_i$ (resp. $\mathscr{S}_i'$), then for all $i$

\begin{enumerate} 

\item[$\mathrm{(i)}$] $\Theta$ restricts to an equivalence $D_{\a_i}^b(\a) \simeq D_{\a_i'}^b(\a')$;

\item[$\mathrm{(ii)}$] the functor \ $\a_i/\a_{i-1} \ \hookrightarrow \  D_{\a_i}^b(\a)/D_{\a_{i-1}}^b(\a) \mathop{\longrightarrow}\limits^\Theta D_{\a_i'}^b(\a')/D_{\a_{i-1}'}^b(\a')$ factors through the following commutative diagram:

\sk

\centers{$ \begin{psmatrix}
D_{\a_i}^b(\a)/D_{\a_{i-1}}^b(\a) & D_{\a_i'}^b(\a')/D_{\a_{i-1}'}^b(\a') \\
\a_i /\a_{i-1} & \a_i' /\a_{i-1}'
\everypsbox{\scriptstyle}
\psset{arrows=->,nodesep=6pt}  
\ncline{1,1}{1,2}^{\Theta}
\psset{arrows=->,linestyle=dashed,nodesep=6pt} 
\ncline{2,1}{2,2}^{\sim}
\psset{arrows=H->,hooklength=1.5mm,hookwidth=-1.2mm,linestyle=solid,nodesep=6pt} 
\ncline{2,1}{1,1}<{\mathrm{can}}
\ncline{2,2}{1,2}>{[p(i)]}		
\end{psmatrix}$}

\noindent so that  $\Theta[-p(i)]$ induces an equivalence $\a_i / \a_{i-1} \simeq \a_i' / \a_{i-1}'$.
\end{enumerate}
\end{de}

From now on, we will assume that the all the objects of the categories involved have finite composition series. This allows us to reformulate the assertions $\mathrm{(i)}$ and $\mathrm{(ii)}$ into less abstract terms:

\begin{itemize}

\item for any simple object $L$ in $\mathscr{S}_i$, the composition factors of $\mathrm{H}^n(\Theta(L))$ lie in $\mathscr{S}_i'$, and even in $\mathscr{S}_{i-1}'$ whenever $n \neq -p(i)$;

\item if $L \in \mathscr{S}_i \smallsetminus \mathscr{S}_{i-1}$ then $\mathrm{H}^{-p(i)}(\Theta(L))$ has a unique composition factor $L'$ in $\mathscr{S}_{i}'\smallsetminus \mathscr{S}_{i-1}'$ and the map $L \longmapsto L'$ induces a bijection between the sets $\mathscr{S}_i\smallsetminus \mathscr{S}_{i-1}$ and $\mathscr{S}_i'\smallsetminus \mathscr{S}_{i-1}'$.

\end{itemize}

\noindent Roughly speaking, $\Theta(L)$ is quasi-isomorphic to $L'[p(i)]$ "modulo some composition factors in $\mathscr{S}_{i-1}'$".

\sk

Let us go back to the derived equivalence we have been studying. Recall that we have constructed a representative  $\mathscr{D}$ of the cohomology of $\Y$, as a bounded complex of finitely generated $(\Lambda G,\Lambda \T^{cF}\rtimes C_W(c\sigma))$-bimodules, both projective as left and right modules. We can argue as in the proof of Theorem \ref{4thm2} and show that for all simple $kG$-module in the principal block, say $S_j$, we have
\begin{equation} \overline{\mathscr{D}}^\vee \otimes_{k G} S_j \, \simeq \,  N_j[m_\zeta -j-r] \label{4eq1}\end{equation}
\noindent where $N_j$ is a $k\T^{cF}\rtimes C_W(c\sigma)$-module satisfying

\centers{$ \mathrm{Res}_{kC_W(c\sigma)}^{k\T^{cF}\rtimes C_W(c\sigma)}\big(N_j\big) \, \simeq \,  k_{j} \oplus k_{j+1} \oplus \cdots \oplus k_{M_{\zeta_j}}.$}

\noindent In particular, the composition factors of $N_j$ are exactly the (inflation of) the simple modules $k_j,\ldots,k_{M_{\zeta_j}}$.   

\sk

From that observation we shall deduce that the functor $\Theta : \overline{\mathscr{D}} \otimes_{kG} -$ induces a perverse equivalence. We use the height function associated to the Brauer tree to define the filtrations

\centers{$ \mathscr{S}_i \, = \, \big\{ S \in \mathrm{Irr}\, kG \, \big| \, \hg(S) \leq r-i\big\}$}

\leftcenters{and}{$\mathscr{S'}_i = \big\{ k_j \in \mathrm{Irr}\, k\T^{cF}\rtimes C_W(c\sigma) \, \big| \, S_j \in \mathscr{S}_i\big\}$}

\noindent on the set of isomorphism classes of simple modules (recall that $\mathrm{h}_\Gamma(S_j) = j-m_{\zeta_j})$. Note that $\mathscr{S}_r = \mathrm{Irr}\, kG$ and $\mathscr{S}_r' = \mathrm{Irr}\,kT^{cF}\rtimes C_W(c\sigma)$ are the last terms of these filtrations.

\sk

Finally, the image by $\Theta$ of a module $S_j \in \mathscr{S}_i \smallsetminus \mathscr{S}_{i-1}$ has by \ref{4eq1} a unique composition factor in $\mathscr{S}_i'\smallsetminus \mathscr{S}_{i-1}'$, namely $k_j$. Since it is concentrated in degree $r+j-m_\zeta = r+\hg(S_j) = 2r-i$, we may choose the perversity function to be $p(i) = i-2r$ so that following holds:

\begin{thm}\label{4thm4}Assume that the assumption $\mathrm{(S})$ holds. Then the functor $\Theta = \overline{\mathscr{D}} \otimes_{kG} -$ together with the triple $(\mathscr{S}_\bullet, \mathscr{S}_\bullet',p)$ induces a perverse equivalence between the principal blocks of  $kG$ and $k\T^{cF}\rtimes C_W(c\sigma)$. The corresponding bijection on the set of isomorphism classes of simple modules is $S_j \longleftrightarrow k_j$.
\end{thm}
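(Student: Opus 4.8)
The main work of this section has already been done in Theorems \ref{4thm1} and \ref{4thm2}; the proof of Theorem \ref{4thm4} is essentially a bookkeeping exercise built on top of them, so my plan is to set up the data carefully and then just verify the axioms. First I would recall that $\Theta$ is a derived equivalence between the principal blocks of $kG$ and $k\T^{cF}\rtimes C_W(c\sigma)$: this is already contained in Theorem \ref{4thm3} and the construction of $\mathscr{D}$ at the start of Section \ref{4se3}, since the pair $(\mathscr{D},\mathscr{V})$ has the same formal properties as $(\mathscr{C},\mathscr{F})$. Because every module in sight has a finite composition series, checking that $\Theta$ is perverse with respect to $(\mathscr{S}_\bullet,\mathscr{S}'_\bullet,p)$ amounts to the two reformulated conditions recalled just before the statement (in the spirit of \cite{CR1}); the restriction condition \textrm{(i)} of the definition then follows formally from $\Theta$ being an equivalence together with the bijectivity of the induced map on simples.

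The one genuine computation is the analogue of equation \ref{4eq1} for every simple $kG$-module in the block, not just the cuspidal ones. I would obtain it exactly as in the proof of Theorem \ref{4thm2}: decompose $\overline{\mathscr{D}}$, up to homotopy, into the generalized $v$-eigenspaces $\overline{\mathscr{D}}_j$, identify each $\overline{\mathscr{D}}_j$ with the complex $0\to P_{m_{\zeta_j}}\to\cdots\to P_j\to 0$ furnished by Theorem \ref{4thm1}, and use that $\mathrm{Hom}_{kG}(\overline{P}_i,S_j)=0$ for $i\ne j$ (recall $\mathrm{End}_{kG}(S_j)=k$ since $k$ is big enough for $G$) to conclude that $\Theta(S_j)$ is quasi-isomorphic to a single module $N_j$ placed in cohomological degree $r+j-m_{\zeta_j}$, with $\mathrm{Res}_{kC_W(c\sigma)}(N_j)\simeq k_j\oplus k_{j+1}\oplus\cdots\oplus k_{M_{\zeta_j}}$, hence with composition factors (after inflation) precisely $k_j,\dots,k_{M_{\zeta_j}}$. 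From this the numerology is immediate: $\Theta(S_j)$ sits in degree $r+\hg(S_j)$, and every composition factor $k_m$ of $N_j$ satisfies $\hg(S_m)=m-m_{\zeta_j}\ge j-m_{\zeta_j}=\hg(S_j)$, with equality only for $m=j$.

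I would then verify the two reformulated conditions directly. Writing $i$ for the stage at which $S_j$ enters the filtration, so that $r-i=\hg(S_j)$ and $-p(i)=2r-i=r+\hg(S_j)$ is exactly the degree in which $\Theta(S_j)$ is concentrated, one checks: (a) for $S=S_{j'}\in\mathscr{S}_i$ the nonzero cohomology of $\Theta(S)$ is concentrated in degree $r+\hg(S_{j'})$ and all its composition factors $k_m$ have $\hg(S_m)\ge\hg(S_{j'})$, hence lie in $\mathscr{S}'_i$, and even in $\mathscr{S}'_{i-1}$ unless that degree equals $-p(i)$, i.e. unless $S_{j'}$ is new at stage $i$; (b) if $S_j$ is new at stage $i$ then the unique composition factor of $\mathrm{H}^{-p(i)}(\Theta(S_j))=N_j$ of extremal height $\hg(S_j)=r-i$ is $k_j$, and $S_j\mapsto k_j$ is visibly a bijection between the simples new at stage $i$ on the two sides. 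This yields the perversity of $\Theta$ (with perversity function $p(i)=i-2r$) together with the asserted bijection $S_j\leftrightarrow k_j$.

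The only point that requires any real care is the uniform deduction of \ref{4eq1}: for non-cuspidal $S_j$ one genuinely needs the full conclusion of Theorem \ref{4thm1} (and hence assumption $\mathrm{(S)}$) to know that $\overline{\mathscr{D}}_j$ is honestly the Rickard complex on $P_{m_{\zeta_j}},\dots,P_j$, and not merely a complex with the correct class in $K_0(kG\text{-}\mathrm{mod})$. Once that is granted, matching degrees against $p(i)=i-2r$ and heights against the filtration is purely formal. One should also take the usual small care that the height filtration is indexed so as to be increasing with $\mathscr{S}_r=\mathrm{Irr}\,kG$ and $\mathscr{S}'_r=\mathrm{Irr}\,k\T^{cF}\rtimes C_W(c\sigma)$, which is what makes $-p(i)$ land on the correct cohomological degree.
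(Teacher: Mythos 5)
Your proof is correct and follows essentially the same route the paper takes: the paper's own argument for Theorem \ref{4thm4} is precisely the paragraph preceding the theorem statement, which establishes equation \ref{4eq1} for all $S_j$ by the same eigenspace decomposition of $\overline{\mathscr{D}}$, the identification of $\overline{\mathscr{D}}_m$ with the Rickard-type complex from Theorem \ref{4thm1}, and the vanishing of $\mathrm{Hom}_{kG}(\overline{P}_i, S_j)$ for $i \neq j$, and then reads off the filtrations and the perversity function from the height function. Your writeup is, if anything, a bit more scrupulous than the paper's about actually checking the two reformulated axioms of a perverse equivalence (and about flagging exactly where assumption $(\mathrm{S})$ is used), and you also tacitly read the filtration as $\mathscr{S}_i = \{S : \hg(S) \ge r - i\}$, which is what the intended meaning must be for $\mathscr{S}_\bullet$ to be increasing with $\mathscr{S}_r = \mathrm{Irr}\,kG$; so no substantive difference.
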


In the case of the derived equivalences predicted by Brou\'e's conjecture, one of the groups involved can be written as a semi-direct product $D \rtimes E$ with $D$ an abelian $\ell$-group and $E$ an $\ell'$-subgroup of $\mathrm{Aut}(D)$. For such an equivalence, the existence of a perverse datum provides  information on the shape of the decomposition matrix of the block:

\begin{cor}[Chuang-Rouquier] There exists an ordering of the simple $KG$-modules and the simple $kG$-modules such that the decomposition matrix of the principal block of $\Lambda G$ has unitriangular shape.
\end{cor}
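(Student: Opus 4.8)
The plan is to deduce the corollary from the perverse equivalence established in Theorem~\ref{4thm4}, using the general principle of Chuang and Rouquier that a perverse equivalence with prescribed perversity datum rigidifies the decomposition matrix on both sides. First I would recall that the principal block of $k\T^{cF}\rtimes C_W(c\sigma)$ is, by Example~\ref{2ex1} and Example~\ref{2ex2}, the block of a group of the form $D\rtimes E$ with $D$ a cyclic (hence abelian) $\ell$-group and $E$ a cyclic $\ell'$-group, whose Brauer tree is a star centred at the exceptional node. For such a block the decomposition matrix is essentially trivial: each simple $k[D\rtimes E]$-module $k_j$ lifts uniquely to a $\Lambda$-lattice $\widetilde S_j$ (since $E$ is an $\ell'$-group and $k_j$ is inflated from $E$), and the only non-unitriangular contribution comes from the exceptional characters, all of which restrict identically to $\ell$-regular classes. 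Concretely, one orders the simple modules $k_0,\dots,k_{h_0-1}$ and the ordinary characters so that $\eta_j\mapsto k_j$ (non-exceptional) and $\chi_{\mathrm{exc}}$ sits last, and then the decomposition matrix of $D\rtimes E$ is already unitriangular with respect to this ordering.

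Next I would transport this through the perverse equivalence. Recall from the reformulation of conditions $\mathrm{(i)}$ and $\mathrm{(ii)}$ given just before Theorem~\ref{4thm4}: if $\Theta=\overline{\mathscr D}\otimes_{kG}-$ is the perverse equivalence with filtrations $\mathscr S_\bullet$, $\mathscr S'_\bullet$ and perversity $p$, then for a simple $kG$-module $S\in\mathscr S_i\smallsetminus\mathscr S_{i-1}$ the complex $\Theta(S)$ has a unique composition factor $S'$ in $\mathscr S'_i\smallsetminus\mathscr S'_{i-1}$ in cohomological degree $-p(i)$, all other composition factors of $\mathrm H^\bullet(\Theta(S))$ lying in $\mathscr S'_{i-1}$, and $S\mapsto S'$ is the bijection $S_j\leftrightarrow k_j$. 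A perverse equivalence is in particular a derived equivalence, hence induces an isometry of Grothendieck groups $K_0(KGb)\xrightarrow{\sim}K_0(K[D\rtimes E]c)$ commuting with the decomposition maps on the two sides; under this isometry, in the Grothendieck group the class $[\Theta(S)]$ equals $(-1)^{p(i)}$ times $[S']$ plus a $\mathbb Z$-combination of classes of modules in $\mathscr S'_{i-1}$. Since the same filtration structure matches the exceptional character and the non-exceptional characters on both sides (the non-exceptional $\chi_j$ corresponding to $\eta_j$, and $\chi_{\mathrm{exc}}$ to the sum of the exceptional characters), the triangularity of the decomposition matrix on the $D\rtimes E$ side pulls back, via this compatible isometry and the filtration-preserving bijection $S_j\leftrightarrow k_j$, to triangularity on the $G$ side.

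Concretely, the recipe is: order the simple $kG$-modules by increasing height $\hg$ in the Brauer tree (refining arbitrarily within each height), and order the simple $KGb$-modules by pulling back the chosen ordering of ordinary characters of $D\rtimes E$ through the perfect isometry. For a simple $KGb$-module $\chi$, the projective cover of $S_j$ contributes $1$ to the $(\chi_j,S_j)$-entry and the perversity filtration forces every other simple constituent $S_{j'}$ of any $\ell$-reduction to satisfy $\hg(S_{j'})<\hg(S_j)$ (equivalently $j'<j$ within a branch, with the exceptional character treated as height $0$); this is exactly the statement that the decomposition matrix is lower unitriangular in the chosen orderings, with the diagonal entries equal to $1$ because the unique top constituent $k_j$ of $\Theta(S_j)$ appears with multiplicity one. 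The main obstacle, and the point requiring the most care, is bookkeeping the exceptional character: on the $D\rtimes E$ side $\chi_{\mathrm{exc}}$ is a single character (a sum of $(|\T^{cF}|_\ell-1)/h_0$ genuine characters) lying at the bottom of the height filtration, so one must check that the perverse datum on the $kG$ side is compatible with placing the cuspidal simple modules $S_{m_\zeta}$ (the edges incident to the exceptional node, of height $0$) first; this follows because $\hg(S_{m_\zeta})=0$ by convention and these are exactly the modules whose projective covers have $\chi_{\mathrm{exc}}$ as a constituent. Once this is matched, the unitriangularity is immediate from the Chuang–Rouquier reformulation, and in fact recovers (conceptually) the case-by-case statement of the second Proposition in Section~\ref{3se3}.
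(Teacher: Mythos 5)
Your proposal follows essentially the same route as the paper: reduce to the star-shaped block of $D\rtimes E$, where the decomposition matrix is trivial, and transport through the perverse equivalence of Theorem~\ref{4thm4} using the filtration by height in the Brauer tree. The paper's own proof is a bit cleaner on the transport step: it works with the inverse functor $\widetilde\Theta^* = $ inverse of ${\mathscr D}\otimes_{\Lambda G}-$, uses the compatibility $\dec_G\circ\widetilde\Theta^* = \Theta^*\circ\dec$ to write $\dec_G\chi_j = \Theta^*([k_j]_k)$, and then applies the perverse-equivalence structure of $\Theta^*$ (same filtrations, opposite perversity) directly.

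There is, however, a concrete error in the direction of your height inequality. You assert that every non-diagonal composition factor $S_{j'}$ of an $\ell$-reduction of $\chi_j$ satisfies $\hg(S_{j'})<\hg(S_j)$, ``equivalently $j'<j$ within a branch.'' This is backwards. In the perverse filtration $\mathscr S_i = \{S \mid \hg(S)\geq r-i\}$ (so that higher-height modules sit in the smaller pieces), the non-distinguished factors of $\Theta^*(k_j)$ lie in $\mathscr S_{i-1}$, which means $\hg\geq\hg(S_j)+1$, strictly \emph{greater} height. Concretely, the $\ell$-reduction of the unipotent character $\chi_j$ has composition factors $S_j$ and $S_{j+1}$ (the simple modules labelling the two edges of the Brauer tree incident to the node $\chi_j$), and $\hg(S_{j+1})=\hg(S_j)+1$. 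With your ordering by increasing height this yields a lower, not upper, unitriangular matrix; the conclusion of unitriangularity survives, but the stated intermediate claim as written is false and the bookkeeping around the exceptional node would not go through as you describe if one literally traced the inequality you wrote.
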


\begin{proof} We claim that any ordering compatible with the perversity (here, the height in  the tree) will be acceptable. Let $\widetilde \Theta = {\mathscr{D}} \otimes_{\Lambda G} -$. The functor 

\centers{$K \widetilde \Theta \, : \, D^b(KG$-$\mathrm{mod}) \longrightarrow D^b(K T^{cF}\rtimes C_W(c\sigma))$}  

\noindent induces a bijection on the simples lying in the principal blocks. It is clear that this bijection maps $\chi_j$ to $\eta_j$ (see Notation \ref{2not2} and Example \ref{2ex2}).

\sk

Let $\widetilde{\Theta}^*$ be an inverse functor of $\widetilde{\Theta}$. It induces again a perverse equivalence $\Theta^* = k\widetilde \Theta^*$ with respect to the same filtrations but for an opposite perversity function. Since the filtration is given by the height in the tree, we obtain

\centers{$ \dec_G \, \chi_j \, = \, \dec_G \, \widetilde{\Theta}^* (\eta_j) \, = \, \Theta^* \big([k_j]_k\big) \, = \,  [S_j]_k \, +  \hskip-3mm \displaystyle \sum_{\hg(S_i) \geq \hg(S_j)} \hskip -3mm a_{i,j} [S_i]_k $}

\noindent for some integers $a_{i,j}$, which proves that the decomposition matrix of $\Lambda G b$ has a unitriangular shape. \end{proof}

\begin{center} {\subsection*{Acknowledgements}} \end{center}

I would like to thank the Oxford Mathematical Institute for providing me excellent research environment during the early stage of this work. I wish to thank  Joe Chuang and Gerhard Hiss for our fruitful correspondence and C\'edric Bonnaf\'e, Jean-Fran\c{c}ois Dat and Rapha\"el Rouquier for many valuable comments and suggestions.

\bk

\bibliographystyle{abbrv}
\bibliography{coxorbits}

\end{document}